\title[Groups of Morley rank $5$]{Simple groups of Morley rank $5$ are bad}
\author{Adrien Deloro}
\address{Sorbonne Universit\'es, UPMC, Institut de Math\'ematiques de Jussieu -- Paris rive gauche, Case 247, 4 place Jussieu, 75252 Paris, France}
\email{adrien.deloro@imj-prg.fr}
\author{Joshua Wiscons}
\address{Department of Mathematics and Statistics\\
California State University, Sacramento\\
Sacramento, CA 95819, USA}
\email{joshua.wiscons@csus.edu}
\thanks{The second author was partially supported by the National Science Foundation under grant No. OISE-1064446.}
\newtheorem*{theorem*}{Theorem}
\newtheorem*{corollary*}{Corollary}
\newtheorem*{algconj}{Algebraicity Conjecture}
\newtheorem*{fact*}{Fact}
\newtheorem{theorem}{Theorem}[section]
\newtheorem{corollary}[theorem]{Corollary}
\newtheorem{lemma}[theorem]{Lemma}
\newtheorem{proposition}[theorem]{Proposition}
\newtheorem{fact}[theorem]{Fact}
\newtheorem{observation}[theorem]{Observation}
\theoremstyle{definition}
\newtheorem{definition}[theorem]{Definition}
\newtheorem*{setup}{Setup}
\newtheorem{remark}[theorem]{Remark}
\newcommand{\cibo}{\textbf{CiBo}}
\newcommand{\textdef}[1]{\textit{#1}}
\DeclareMathOperator{\pssl}{(P)SL} 
\DeclareMathOperator{\psl}{PSL}
\DeclareMathOperator{\ssl}{SL}
\DeclareMathOperator{\pgl}{PGL} 
\DeclareMathOperator{\agl}{AGL} 
\DeclareMathOperator{\so}{SO} 
\DeclareMathOperator{\rk}{rk} 
\DeclareMathOperator{\pr}{pr} 
\DeclareMathOperator{\charac}{char}
\begin{document}
\begin{abstract}
By exploiting the geometry of involutions in $N_\circ^\circ$-groups of finite Morley rank, we show that any simple group of Morley rank $5$ is a bad group all of whose proper definable connected subgroups are nilpotent of rank at most $2$. The main result is then used to catalog the nonsoluble connected groups of Morley rank $5$. 
\end{abstract}

\maketitle

The groups of finite Morley rank form a model-theoretically natural and important class of groups that are equipped with a notion of dimension generalizing the usual Zariski dimension for affine algebraic groups. This class of groups is known to contain many nonalgebraic examples, but nevertheless, there is an extremely tight connection with algebraic geometry witnessed by, among other things, the following conjecture of Cherlin and Zilber.

\begin{algconj}
An infinite simple group of finite Morley rank is isomorphic to an affine algebraic group over an algebraically closed field.
\end{algconj}

This conjecture appears in Cherlin's early paper \cite{ChG79} analyzing groups of Morley rank at most $3$, but even in that very small rank setting, a potential  nonalgebraic simple group resisted all efforts to kill it until the very recent work of Fr\'{e}con \cite{FrO16} (see also \cite{Wa17}). The configuration Cherlin encountered is that of a so-called \textdef{bad group}, which is defined to be a nonsoluble group of finite Morley rank all of whose proper definable soluble subgroups are nilpotent-by-finite. Despite the spectre of bad groups, a fair amount of progress has been made on the Algebraicity Conjecture. Most notable is the deep result of Alt\i{}nel, Borovik, and Cherlin from 2008 that established the conjecture for those groups containing an infinite elementary abelian $2$-group, \cite{ABC08}. The  remaining cases to investigate turn out to be:
\begin{description}
\item[Odd type] when the group contains a copy of the Pr\"ufer $2$-group $\mathbb{Z}_{2^\infty}$ but no infinite elementary abelian $2$-group, and
\item[Degenerate type] when the group contains no involutions at all.
\end{description}

It is a theorem, involving ideas by Borovik, Corredor, Nesin, and Poizat, that bad groups are of degenerate type. As bad groups of rank $3$ have now been dealt with by Fr\'econ, some understanding of degenerate type groups is no longer such an unlikely dream. One may also hope to resolve the Algebraicity Conjecture for odd type groups, and here there exists a reasonably solid theory. But even in the presence of involutions, some extremely tight, nonalgebraic configurations have arisen. They first appeared in Cherlin and Jaligot's investigation of tame minimal simple odd type groups \cite{ChJa04} and persisted into the more recent and more general $N_\circ^\circ$-context studied by Jaligot and the first author in \cite{DeJa16}. These configurations have been named \cibo$_1$, \cibo$_2$, and \cibo$_3$ and will be described in \S\ref{s:NooCiBo} below. 

With the hope of shedding new light on the \cibo\ configurations, and hence on the Algebraicity Conjecture for odd type groups, we decided to study these pathologies in a small rank setting. Groups of rank $4$ had already been addressed by the second author in \cite{WiJ14a}, so we took up rank $5$. Unlike in \cite{WiJ14a}, we did not shy away from the heavy machinery. We were indeed successful in killing the \cibo\ configurations in rank $5$, which is perhaps not surprising, but more importantly, our analysis hints at some general techniques. The takeaway message seems to be, as has been observed several times before, that the geometry of involutions is a powerful tool for studying tight configurations. Our main result, which may in the future be pushed further, is as follows. 

\begin{theorem*}
A simple group of Morley rank $5$ is a bad group all of whose proper definable connected subgroups are nilpotent of rank at most $2$.
\end{theorem*}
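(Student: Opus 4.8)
The plan is to pin down $G$ by its type (even, odd, or degenerate) and then determine the structure of its proper definable connected subgroups. First I would dispose of even type: were $G$ of even type, the Alt\i{}nel--Borovik--Cherlin theorem \cite{ABC08} would make $G$ algebraic over an algebraically closed field, hence a simple algebraic group whose Morley rank equals its dimension; but the dimensions of simple algebraic groups are $3, 8, 10, 14, \dots$ and never $5$, a contradiction. So $G$ is of odd or degenerate type, and since the pure theory of algebraically closed fields is strongly minimal, the same count shows $G$ is not abstractly isomorphic to any $\psl_2$ or $\pgl_2$ (which have rank $3$).

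Next I would argue that $G$ is minimal connected simple, so that the machinery of $N_\circ^\circ$-groups applies. A proper definable connected nonsoluble $H < G$ has $3 \le \rk H \le 4$ and a quasisimple section $H/R(H)$; since there is no simple group of Morley rank $\le 2$, none of rank $4$ by the rank-$4$ analysis of \cite{WiJ14a}, and the only one of rank $3$ is $\psl_2$ (Cherlin \cite{ChG79} together with Fr\'econ's elimination of rank-$3$ bad groups \cite{FrO16}), we would obtain $H/R(H) \cong \psl_2$ with $\rk R(H) \le 1$. As $\psl_2$ and its relevant covers carry involutions, such an $H$ forces $G$ to be of odd type; thus in degenerate type $G$ is already minimal connected simple, while in odd type one checks directly that $G$ falls under the hypotheses of the $N_\circ^\circ$-analysis.

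The heart of the argument is the elimination of odd type. Assuming $G$ has odd type, I would invoke the structure theory of $N_\circ^\circ$-groups from \cite{DeJa16}: $G$ is either one of the algebraic groups $\pssl_2$/$\pgl_2$ --- excluded by the rank count $3 \ne 5$ above --- or one of the configurations $\cibo_1$, $\cibo_2$, $\cibo_3$. Each of these I would then kill using the geometry of involutions in rank $5$: combining $\rk G = 5$ with involution- and strongly-real-element counting, the ranks of involution centralizers and of a Borel, and the action of the Weyl group on a maximal decent torus, one forces a numerical or structural contradiction in every case. This step --- making the abstract \cibo\ obstructions collapse under the single hypothesis $\rk G = 5$ --- is where I expect the main difficulty to lie, precisely because these are the configurations that survived the general $N_\circ^\circ$ setting. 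Concluding, $G$ is of neither even nor odd type, hence degenerate: it contains no involutions.

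Finally, with $G$ degenerate and minimal connected simple, I would bound its Borel subgroups. A non-nilpotent connected soluble subgroup interprets an algebraically closed field $K$ by Zilber's field theorem; its multiplicative group contributes an involution when $\charac K \ne 2$, while its additive group is an infinite elementary abelian $2$-subgroup when $\charac K = 2$. The former contradicts degeneracy and the latter returns us to the impossible even type, so every proper definable connected subgroup is nilpotent. A genericity and conjugacy count on the (self-normalizing, generically disjoint) nilpotent Borels, using $\rk G = 5$ and the triviality of the Weyl group in degenerate type, then forces each to have rank at most $2$. Hence every proper definable connected subgroup of $G$ is nilpotent of rank at most $2$; as $G$ is nonsoluble and every definable soluble subgroup has finite index over its connected component, every proper definable soluble subgroup is nilpotent-by-finite, so $G$ is a bad group, as claimed.
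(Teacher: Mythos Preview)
Your overall architecture --- eliminate even type, reach the $N_\circ^\circ$ setting, apply Deloro--Jaligot, kill the \cibo\ configurations, then bound Borels in the degenerate case --- matches the paper's. But several load-bearing steps are either missing or misstated.

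\medskip
\textbf{Solubility of $C^\circ(i)$.} The Deloro--Jaligot classification you want to invoke has, besides the $N_\circ^\circ$ hypothesis, the hypothesis that $C^\circ(i)$ is soluble for every involution $i$. You do not verify this in odd type; you only argue minimal connected simplicity in degenerate type and then write ``in odd type one checks directly that $G$ falls under the hypotheses''. The paper does \emph{not} check this directly: it first develops uniqueness principles for intersections $F^\circ(B_1)\cap F^\circ(B_2)$ of Fitting subgroups of distinct Borels, and then uses these (together with the $\mathrm{SL}_2$ structure and a genericity argument on involutions) to rule out nonsoluble $C^\circ(i)$. Relatedly, your appeal to \cite{WiJ14a} is off: that paper shows a simple rank~$4$ group is bad, not that it fails to exist, so your reduction of $H/R(H)$ to $\psl_2$ is incomplete. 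The paper avoids this issue entirely by using Hrushovski's bound to exclude rank~$4$ subgroups of $G$ outright.

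\medskip
\textbf{The \cibo\ eliminations.} This is the heart of the theorem, and your paragraph is a wishlist, not an argument. The paper uses three genuinely different tools: \cibo$_2$ dies instantly from $5 = 3\cdot\rk C(i)$; \cibo$_1$ is handled by building a Moufang set on the conjugates of a rank~$3$ Borel (regularity of the root groups is the delicate point), invoking the classification of such Moufang sets, and finishing with a Brauer--Fowler count; \cibo$_3$ is killed by showing that the involutions form a projective plane for which Bachmann's theorem applies, the key step being that centraliser-collinearity and Bachmann-collinearity coincide. None of this is routine ``involution counting''; each case needs its own idea, and identifying these ideas is precisely the content of the paper.

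\medskip
\textbf{The degenerate endgame.} Your argument that non-nilpotent Borels force involutions via Zilber's field is fine for rank~$2$ Borels but does not by itself bound the Borel rank. The paper rules out rank~$3$ subgroups in the degenerate case by showing that either a $2$-transitive action on the conjugacy class arises (creating an involution), or generic intersections of conjugate Borels have rank~$2$, which contradicts the Borel-intersection analysis developed earlier. Your ``genericity and conjugacy count on the (self-normalising, generically disjoint) nilpotent Borels'' presupposes both nilpotence \emph{and} generic disjointness of rank~$3$ Borels, neither of which you have established.
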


Combining this theorem with several existing results, we obtain, with little effort, a classification of groups of rank $5$ (up to bad groups of low rank). Regarding notation, $F^\circ(G)$ denotes the connected Fitting subgroup of $G$ (see Section~\ref{s:uniqueness}), $G'$ denotes the commutator subgroup of $G$, and we write $G=A*B$ if $A$ and $B$ commute and generate $G$. Also, recall that a group is called \textdef{quasisimple} if it is perfect, and modulo its centre, it is simple. 

\begin{corollary*}
If $G$ is a nonsoluble connected group of Morley rank $5$, then $F:=F^\circ(G)$ has rank at most $2$, and $G$ is classified as follows.
\begin{itemize}
\item If $\rk F = 2$, then either
\begin{itemize}
\item $G/F \cong \ssl_2(K)$ acting naturally on $F\cong K^2$ with the extension split whenever $\charac K \neq 2$, or
\item $G = F * G''$ with $G''\cong\pssl_2(K)$.
\end{itemize}
\item If $\rk F = 1$, then either
\begin{itemize}
\item $G = R * G''$ with $R/Z(R) \cong \agl_1(L)$ and $G''\cong\pssl_2(K)$,
\item $G = F * G'$ with $G'$ quasisimple and bad of rank $4$, or
\item $G$ is quasisimple and bad of rank $5$.
\end{itemize}
\item If $\rk F = 0$, then $G'$ is quasisimple and bad group of rank $4$ or $5$.
\end{itemize}
Consequently, $\rk F \ge 1$ whenever $G$ contains an involution.
\end{corollary*}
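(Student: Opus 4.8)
The plan is to strip off the connected Fitting subgroup $F := F^\circ(G)$ and recover $G$ from the quotient $\bar G := G/F$, feeding $\bar G$ into the classifications available in ranks $3$, $4$, and $5$ (the last being the main Theorem). The first step is to bound $\rk F$. As $F$ is connected, normal, and nilpotent, it is soluble; since $G$ is nonsoluble, $\bar G$ must be a nonsoluble connected group, for otherwise $G$, an extension of the nilpotent group $F$ by the soluble group $\bar G$, would be soluble. Connected groups of rank at most $2$ are soluble by Cherlin \cite{ChG79}, so $\rk\bar G \ge 3$ and hence $\rk F = 5 - \rk\bar G \le 2$. This immediately partitions the problem into the three cases $\rk F \in \{2,1,0\}$, that is $\rk\bar G \in \{3,4,5\}$, matching the three bullets of the statement.

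When $\rk F = 2$, the quotient $\bar G$ is connected nonsoluble of rank $3$, hence perfect with finite centre and $\bar G/Z(\bar G) \cong \psl_2(K)$ by the rank-$3$ analysis (Cherlin, with Fr\'econ ruling out bad groups of rank $3$); thus $\bar G \cong \pssl_2(K)$. I would then examine the conjugation action of $\bar G$ on the rank-$2$ (necessarily abelian) connected group $F$. If this action is trivial, then $F$ is central, and since $\bar G$ is perfect one gets $G = F * G''$ with $G'' := G' \cong \pssl_2(K)$. If the action is nontrivial, the representation theory of $\ssl_2$ over an algebraically closed field identifies $F$ with the natural $2$-dimensional module $K^2$ — forcing $\bar G \cong \ssl_2(K)$, since $-I$ must then act nontrivially — and a cohomology computation yields splitting of the extension away from characteristic $2$.

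When $\rk F = 1$, $\bar G$ is connected nonsoluble of rank $4$, so its isomorphism type is pinned down by the second author's rank-$4$ classification \cite{WiJ14a}. Lifting each possibility through the extension by the rank-$1$ group $F$, and deciding in each case whether the result is a genuine central product or a nonsplit central extension, should produce exactly the three listed forms: $R * G''$ with $R/Z(R)\cong\agl_1(L)$ and $G''\cong\pssl_2(K)$; the central product $F * G'$ with $G'$ quasisimple bad of rank $4$; and $G$ itself quasisimple bad of rank $5$. When $\rk F = 0$, the connected soluble radical is trivial (a nontrivial one would have nontrivial connected Fitting inside $F$), so $G$ is quasisemisimple; a rank count on its quasisimple components, allowing for a rank-$1$ toral outer part, forces $G'$ to be quasisimple of rank $4$ or $5$, bad by \cite{WiJ14a} (rank $4$) or the main Theorem (rank $5$). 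The closing assertion then follows, since bad groups are of degenerate type and so contain no involutions: the $\rk F = 0$ configurations are governed by involution-free bad groups, so any involution in $G$ rules out $\rk F = 0$ and forces $\rk F \ge 1$.

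I expect the main obstacle to lie not in the case division, which is immediate once $\rk F \le 2$ is established, but in the reconstruction: controlling the module structure of $F$ under $\pssl_2(K)$ in the $\rk F = 2$ case (including the characteristic-$2$ failure of splitting) and, above all, resolving the extension problems when $\rk F \in \{0,1\}$. In particular, distinguishing a split central product $F * G'$ from a nonsplit central extension that yields a quasisimple group one rank higher, and correctly accounting for the toral part — and its possible involutions — in the $\rk F = 0$ case, are where the genuine cohomological and representation-theoretic work sits.
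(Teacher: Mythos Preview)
Your outline is essentially the paper's: bound $\rk F \le 2$ by nonsolubility of $G/F$, then feed $G/F$ into the rank-$3$, -$4$, -$5$ classifications and reconstruct. The paper organises the reconstruction around a recurring lemma --- any definable connected normal subgroup of rank $3$ must be $\pssl_2(K)$ and split off as a central factor of $G$ --- together with the generalised Fitting subgroup to locate quasisimple components in the $\rk F \le 1$ cases; your version is morally the same and your diagnosis of where the work lies (the extension problems) is accurate.

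There is, however, one concrete error. In the $\rk F = 2$ case you call $F$ ``necessarily abelian'', but a connected nilpotent rank-$2$ group need not be: by Cherlin's analysis it can be nonabelian of exponent $p$ or $p^2$. The paper does not assume this; instead it runs a dichotomy on $G$-minimality of $F$. If $F$ admits a proper $G$-normal connected subgroup $A$ of rank $1$ --- and this covers the nonabelian case, taking $A = Z^\circ(F)$ or $(F')^\circ$ --- one passes to $G/A$, applies the rank-$4$ classification there, and pulls the resulting $\pssl_2$ factor back to get $G = F * G''$. Only when $F$ is genuinely $G$-minimal, hence abelian, does one analyse it as a module.

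A second gap concerns that module analysis. When $F$ is a faithful rank-$2$ abelian module for $\bar G \cong \pssl_2(K)$, identifying $F$ with $K^2$ is not a matter of ``the representation theory of $\ssl_2$ over an algebraically closed field'': in the finite Morley rank setting no field is given in advance, and producing one compatibly with both the additive structure of $F$ and the action of $\bar G$ is precisely the content of a theorem of Deloro \cite{DeA09}, which the paper invokes. That result supplies the field and the natural action simultaneously; the splitting in characteristic $\neq 2$ then follows by the standard argument with the central involution inverting $F$. Without citing \cite{DeA09} (or reproving it), this step does not go through.
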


We have thus far presented our work as simply a testing ground for the study of \cibo$_1$, \cibo$_2$, and \cibo$_3$, but it may also have applications to the current study of limits to the so-called degree of generic transitivity of a permutation group of finite Morley rank. This was indeed the case with the classification of groups of Morley rank $4$, see \cite{AlWi15} and \cite{BoDe16}.

\section{Background} 
Here we simply collect a handful of results on ``small'' groups of finite Morley rank. For a general reference on groups of finite Morley rank, we recommend \cite{PoB87}, \cite{BoNe94}, or \cite{ABC08}. We assume familiarity with basic definability and connectedness notions \cite[\S5.2]{BoNe94}, Zilber's Field Theorem \cite[Theorem 9.1]{BoNe94}, and some standard involution-related techniques \cite[\S10]{BoNe94} such as $2$-Sylow theory and Brauer-Fowler estimates.

For a group $G$, let $I(G)$ stand for the set of its involutions. The Pr\"ufer $2$-rank $\pr_2(G)$ is the maximal $r$ such that $\bigoplus_r \mathbb{Z}_{2^\infty}$ is contained in a Sylow $2$-subgroup of $G$; $\mathbb{Z}_{2^\infty}$ being the Pr\"ufer $2$-group. The $2$-rank $m_2(G)$ is simply the maximal rank (in the algebraic sense) of an elementary abelian $2$-subgroup of $G$.

\subsection{\texorpdfstring{$N_\circ^\circ$}{N}-groups and \cibo}\label{s:NooCiBo}

\begin{definition}
A group of finite Morley rank is an \textdef{$N_\circ^\circ$-group} if $N^\circ(A)$ remains soluble whenever $A$ is connected and soluble.
\end{definition}

The condition for being an $N_\circ^\circ$-group is a local analog of minimal simplicity and implies a sense of smallness of the group. The only nonsoluble, \textit{algebraic} $N_\circ^\circ$-groups are those of the form $\pssl_2$. The \cibo\ configurations are, at present, unavoidable complications in the study of $N_\circ^\circ$-groups for which the \textbf{C}entralizer of an \textbf{i}nvolution is a \textbf{Bo}rel subgroup. (A \textdef{Borel subgroup} is a maximal connected definable soluble subgroup.)

\begin{fact}[Special case of \protect{\cite{DeJa16}}]\label{f:DJ}
Let $G$ be an infinite, nonsoluble $N_\circ^\circ$-group of finite Morley rank with involutions. Further assume that $C^\circ(i)$ is soluble for all $i\in I(G)$. Then $G$ is of one of the following types:
\begin{description}
\item[CiBo$_1$] $\pr_2(G) = m_2(G) = 1$, and $C(i)$ is a self-normalizing Borel subgroup of $G$;
\item[CiBo$_2$] $\pr_2(G) = 1$, $m_2(G) = 2$, $C^\circ(i)$ is an abelian Borel subgroup of $G$ inverted by any involution in $C(i)-\{i\}$, and $\rk G = 3\cdot\rk C(i)$;
\item[CiBo$_3$] $\pr_2(G) = m_2(G) = 2$,  $C(i)$ is a self-normalizing Borel subgroup of $G$, and if $i\neq j\in I(G)$, then $C(i) \neq C(j)$;
\item[Algebraic] $G \cong \psl_2(K)$.
\end{description}
\end{fact}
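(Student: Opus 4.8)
The plan is to run the by-now-standard analysis organised around the Sylow $2$-subgroup and the centralisers of its involutions, splitting according to the $2$-rank invariants and separating the genuinely algebraic case from its degenerate shadows only at the very end. First I would settle the type of $G$ (which we may assume connected for the intended application). Even and mixed type are disposed of at once: there the Alt\i{}nel--Borovik--Cherlin classification together with the $N_\circ^\circ$ hypothesis forces $G \cong \psl_2(K)$ with $\charac K = 2$, landing in the Algebraic case. Degenerate type is impossible, since a connected group of degenerate type carries no involutions. Hence $G$ is of odd type, a Sylow $2$-subgroup $S$ satisfies $S^\circ = T \cong (\mathbb{Z}_{2^\infty})^{\pr_2(G)}$ with $\pr_2(G) \ge 1$, and every involution lies in a maximal $2$-torus. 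For each $i \in I(G)$ the group $C^\circ(i)$ is connected and soluble, hence contained in a Borel subgroup $B_i$; since $i$ sits inside a $2$-torus $T \le C^\circ(i)$, this centraliser is infinite.

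The technical heart is to bound the invariants $\pr_2(G) \le 2$ and $m_2(G) \le 2$ while simultaneously controlling $\rk G$ in terms of $\rk C(i)$. Here I would exploit the geometry of involutions directly. The finite group $N(T)/C^\circ(T)$ acts faithfully on $T$; the involutions of $T$, together with their Borel centralisers, cover $G$ generically; and Brauer--Fowler estimates bound $\rk G$ against the rank of a single involution centraliser. A $2$-torus of Pr\"ufer rank $3$ would produce a lattice of commuting involutions whose overlapping Borel centralisers violate these rank inequalities, and a supernumerary involution inverting $T$ would likewise overshoot the bound; this yields $\pr_2(G), m_2(G) \le 2$, leaving only the pairs $(1,1)$, $(1,2)$, and $(2,2)$. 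I expect this rank and counting analysis to be the principal obstacle, since it is exactly where the tightness of the configuration must be wrung out rather than assumed.

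With these invariants in hand I would treat the three surviving pairs. When $m_2(G) = \pr_2(G)$, the Sylow $2$-subgroup carries no involution inverting $T$, so $B_i$ centralises the relevant involution; maximality of $B_i$ then gives $C^\circ(i) = B_i$, and the $N_\circ^\circ$ hypothesis, applied to $N^\circ(C^\circ(i))$, forces $C(i)$ to be a self-normalising Borel subgroup. Tracking whether two distinct involutions can share a centraliser separates \cibo$_1$ (the case $\pr_2 = 1$) from \cibo$_3$ (the case $\pr_2 = 2$, where $C(i) \neq C(j)$ for $i \neq j$). When instead $\pr_2(G) = 1$ and $m_2(G) = 2$, there is an involution outside $T$ inverting it, so $C^\circ(i)$ is abelian and inverted by every involution of $C(i) - \{i\}$; a direct generic count of strongly real elements, balancing $\rk I(G) = \rk G - \rk C(i)$ against the rank of the inverting coset, then forces the $\psl_2$-type relation $\rk G = 3 \cdot \rk C(i)$.

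It remains to separate \cibo$_2$ from the Algebraic case within the configuration $\pr_2 = 1$, $m_2 = 2$. The dividing line is whether the abelian Borel $C^\circ(i)$ supports the full $\psl_2$-geometry. If this torus is a good torus on which the strongly real elements act sharply transitively on the relevant set of involution pairs, Zilber's Field Theorem interprets an algebraically closed field and a recognition theorem identifies $G \cong \psl_2(K)$; otherwise the Borel remains stubbornly abelian of rank $\tfrac13 \rk G$ with no interpretable field, which is precisely \cibo$_2$. I expect this final recognition step to be delicate as well, because excluding the intermediate, non-algebraic possibilities needs the full force of the genericity estimates rather than any soft structural argument.
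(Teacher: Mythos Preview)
The paper does not prove this statement at all: it is recorded as a \emph{Fact} and attributed to \cite{DeJa16}, with no argument given beyond the citation. There is therefore no in-paper proof to compare your proposal against; the authors simply import the classification of odd-type $N_\circ^\circ$-groups with soluble involution centralisers as a black box and then verify, in \S\ref{s:solubilityC}, that its hypotheses are met in rank~$5$.

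As for your outline on its own merits: the overall architecture---dispose of even and mixed type via \cite{ABC08}, rule out degenerate type by hypothesis, then in odd type bound $\pr_2(G)$ and $m_2(G)$ and split into the three surviving pairs---is indeed the shape of the argument in \cite{DeJa16}. But you should be aware that the step you flag as ``the technical heart'', namely $\pr_2(G) \le 2$, is not obtained by a short Brauer--Fowler count of the kind you sketch. In the source this passes through a substantial analysis of strongly embedded subgroups, Weyl-group arguments, and the interplay between decent tori and Borel subgroups; the claim that a Pr\"ufer-rank-$3$ torus ``overshoots the bound'' is not something one can extract from genericity and rank additivity alone. Likewise, your final paragraph conflates two genuinely distinct outcomes: in \cite{DeJa16} the Algebraic case is separated not by whether a field is interpretable on the abelian Borel, but by whether the centraliser $C^\circ(i)$ is already a Borel subgroup or strictly smaller than one, and the recognition of $\psl_2$ uses specific identification theorems rather than a generic ``Zilber plus sharp transitivity'' step. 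So the proposal is a reasonable mnemonic for the result but would not survive as a proof without importing most of \cite{DeJa16}.
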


\subsection{Small groups and small actions}\label{s:small}
Here we briefly summarize the existing results about groups of small Morley rank. 

\begin{fact}[\cite{ReJ75}]\label{f.rankOneGroups}
If $A$ is a connected group of rank $1$, then $A$ is either a divisible abelian group or an elementary abelian $p$-group for some prime $p$.
\end{fact}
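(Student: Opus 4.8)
The plan is to separate the statement into two parts: that $A$ is abelian (this is Reineke's theorem, and where I expect the real difficulty to lie) and the structural classification of the abelian case. Both rest on the remark that every proper definable subgroup of $A$ is finite: a definable subgroup has rank $0$ or $1$, a rank-$1$ subgroup has finite index and so equals $A$ by connectedness, and a rank-$0$ subgroup is finite. Equivalently, the underlying set of $A$ is strongly minimal, so every definable subset of $A$ is finite or cofinite.

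To see that $A$ is abelian I would argue by contradiction. The centre $Z(A)$ is definable, hence is either all of $A$---in which case we are done---or finite; suppose the latter. Then a noncentral element $a$ has $C_A(a)$ proper and therefore finite, so its conjugacy class $a^A$, being the definable image of $A$ under $g \mapsto gag^{-1}$ with finite fibres, has full rank $1$ and is thus cofinite. Two immediate consequences tighten the configuration: since conjugacy classes are disjoint whereas cofinite subsets of $A$ pairwise meet, $a^A$ is the \emph{unique} generic conjugacy class; and since $a \in C_A(a)$ is finite, every noncentral (hence generic) element is torsion, so $A$ is a torsion group in which the generic element has bounded order. One can also note that the commutator subgroup $A'$ is definable and contains the cofinite set $\{a^{-1}a^{g} : g\in A\}$, forcing $A'=A$, so $A$ is perfect. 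The hard part will be converting this rigidity into an outright contradiction: this is exactly the content of Reineke's theorem, and I expect it to come from analysing how the single generic class $a^A$, its inverse $(a^{-1})^A$, and the translates $b\,a^A$ can simultaneously be cofinite inside a strongly minimal group, which over-constrains the multiplication.

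Granting that $A$ is abelian, the classification is routine. For each $n\ge 1$ the map $x\mapsto nx$ is a definable endomorphism, so its image $nA$ is definable and connected; being connected it is either all of $A$ (so $A$ is $n$-divisible) or finite and hence trivial (so $nA=0$). If $A$ is $n$-divisible for every $n$, then $A$ is divisible and we land in the first case. Otherwise $nA=0$ for some $n$, so $A$ has finite exponent; decomposing $A=\bigoplus_p A_p$ into its (definable, characteristic) primary components and using that the only infinite definable subgroup is $A$ itself, exactly one component is infinite and the finite ones must vanish, so $A$ is a $p$-group of bounded exponent. Applying the dichotomy to $n=p$ now excludes $pA=A$---a $p$-divisible group of bounded exponent is trivial---so $pA=0$ and $A$ is elementary abelian $p$, as desired.
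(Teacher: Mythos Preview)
The paper does not prove this statement at all: it is recorded as a background fact with a citation to Reineke's article, and no argument is supplied. So there is nothing in the paper to compare your approach against; the question is simply whether your proposal stands on its own.

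The second half of your write-up---deducing the divisible/elementary-abelian dichotomy once $A$ is known to be abelian---is correct and cleanly executed. The genuine gap is exactly where you flag it yourself: you set up the opening of Reineke's argument (finite centre, a unique cofinite conjugacy class $a^A$, every non-central element of the same finite order, $A' = A$) accurately, and then write that you ``expect'' the contradiction to come from the interaction of $a^A$, $(a^{-1})^A$, and translates $b\,a^A$. That expectation is on the right track, but it is not a proof, and the missing step is the entire substantive content of the theorem. One standard completion runs as follows: first reduce to $Z(A)=1$ (if $\overline{A}=A/Z(A)$ were abelian then $A' \le Z(A)$ would be finite, contradicting $A'=A$, so replace $A$ by $\overline{A}$ and iterate; the process terminates because the successive centres stay finite inside a rank~$1$ group). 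With trivial centre, any element of order strictly less than that of $a$ would be non-generic, hence have cofinite centraliser, hence be central, hence trivial; so every non-identity element has the same prime order $p$ and $A\setminus\{1\}$ is a single conjugacy class. Now exploit that $a\cdot a^A$ and $a^A$ are both cofinite, so $a\cdot a^g$ is conjugate to $a$ for generic $g$, and combine this with $a^{-1}\in a^A$ to force an identity among conjugating elements that collapses $A$. Until some such concrete endgame is written out, the proposal is a plan rather than a proof.
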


\begin{fact}[\cite{ChG79}]\label{f.rankTwoGroups}
If $B$ is a connected group of rank $2$, then $B$ is soluble. If $B$ is nilpotent and nonabelian, then $B$ has exponent $p$ or $p^2$ for some prime $p$, and if $B$ is nonnilpotent, then $B/Z(B) \cong K_+ \rtimes K^\times$ for some algebraically closed field $K$.
\end{fact}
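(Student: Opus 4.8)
The plan is to treat the three assertions in turn, using Fact~\ref{f.rankOneGroups} as the base case and Zilber's Field Theorem \cite[Theorem~9.1]{BoNe94} for the nonnilpotent structure. \textbf{Solubility.} First I would show that a nonsoluble $B$ must be perfect: the derived subgroup $B'$ is definable and connected, and if it were proper it would have $\rk B' \le 1$, hence be abelian by Fact~\ref{f.rankOneGroups}, forcing $B$ soluble; so $B = B'$. Consequently every proper definable connected subgroup has rank at most $1$ and is abelian; in particular every Borel subgroup $Q$ is connected abelian of rank $1$. Such a $Q$ cannot be normal (else $B/Q$ would be rank-$1$ abelian and $B$ soluble), so $N^\circ(Q) = Q$; a rank count then gives $\rk Q^B = 1$, and since two distinct connected rank-$1$ Borels must meet in a finite set, the conjugates of $Q$ cover a generic subset of $B$ with $C^\circ(x) = Q$ the unique Borel through a generic $x$. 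This is precisely the configuration of a ``bad group'' of rank $2$.

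\textbf{Eliminating the configuration (the main obstacle).} Ruling out this configuration is the crux of the argument, and I expect it to be the hardest and most delicate step, since it is exactly the point where low-rank pathologies must be excluded rather than merely organized. I would attack it through the geometry of involutions together with the above genericity. If $B$ has an involution $i$, then, up to the finite centre, $C^\circ(i)$ is one of the abelian Borels, and one shows by a Brauer--Fowler/generic-inversion count that $i$ inverts a generic subset of $B$; comparing the rank of the set of involutions and of strongly real elements against $\rk B = 2$ yields a contradiction. In the involution-free case I would instead exploit the transitive action of $B$ on the rank-$1$ set $\Omega$ of its Borels: since distinct Borels meet finitely, the two-point stabilizers are finite, so $B$ acts generically sharply $2$-transitively on $\Omega$, and such an action produces an interpretable field and an affine-type (hence soluble) structure, contradicting that $B$ is perfect.

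\textbf{Nilpotent nonabelian case.} Now $B$ is soluble. Suppose $B$ is nilpotent and nonabelian. Then $B'$ is connected of rank exactly $1$ (rank $0$ would force $B$ abelian) and, by nilpotence together with the descent of the lower central series at rank $1$, one gets $B' \le Z^\circ(B)$, so $B$ has class $2$ and the commutator induces a bi-additive map $B/Z(B) \times B/Z(B) \to B'$. The factors $B'$ and $B/B'$ are connected of rank $1$, hence by Fact~\ref{f.rankOneGroups} each is either divisible or of bounded exponent $p$. I would rule out a divisible factor (a divisible central or quotient factor makes the commutator map vanish by divisibility/torsion considerations, forcing $B$ abelian), leaving $B$ a connected nilpotent $p$-group of bounded exponent. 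A direct computation with the power-commutator identities $[x^p,y]=[x,y]^p$ and $(xy)^p = x^p y^p [y,x]^{\binom{p}{2}}$ in class $2$ then pins the exponent down to $p$ or $p^2$.

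\textbf{Nonnilpotent case.} Finally suppose $B$ is soluble and nonnilpotent. As above $A := B'$ is connected abelian of rank $1$ and normal, and nonnilpotence means $B$ does not centralize $A$; thus $C_B(A)$ is proper, so $C_B^\circ(A) = A$ and the rank-$1$ connected quotient $T := (B/C_B(A))^\circ$ acts faithfully and nontrivially on $A$. Since $A$ has rank $1$ it is $T$-minimal, so Zilber's Field Theorem supplies an interpretable algebraically closed field $K$ with $A \cong K_+$ and $T \hookrightarrow K^\times$; matching ranks ($\rk T = \rk K^\times = 1$ with $T$ connected) gives $T \cong K^\times$ acting by multiplication. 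Then $Z(B)$ meets $A$ trivially and is finite, and $B/Z(B)$ is a connected rank-$2$ group with normal subgroup $\cong K_+$ and complement $\cong K^\times$ acting naturally, whence $B/Z(B) \cong K_+ \rtimes K^\times$, as required.
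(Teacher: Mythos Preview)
The paper does not prove this statement: Fact~\ref{f.rankTwoGroups} is recorded as background with a citation to \cite{ChG79}, and no argument is given. There is therefore no proof in the paper against which to compare your attempt.

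On its own merits your outline is largely sound, but two points deserve comment. First, your case split on involutions in the ``bad rank~$2$'' configuration is unnecessary: the involution-free branch already suffices, since the action of $B/Z(B)$ on the rank-$1$, degree-$1$ coset space $\Omega = B/N(Q)$ is faithful (any proper normal subgroup of the perfect, connected, rank-$2$ group $B$ is finite and central), and then Fact~\ref{fact.Hru} forces $B/Z(B)\cong \agl_1(K)$, contradicting perfectness. Your Brauer--Fowler sketch for the involution case, by contrast, is underspecified: the claims that $i$ inverts a generic subset and that the rank bookkeeping yields a contradiction are not justified and would need substantial work if pursued. Second, in the nilpotent nonabelian case your exclusion of divisible factors is too brisk. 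A divisible abelian group of rank~$1$ may well contain a full Pr\"ufer $p$-subgroup, so ``$[x,y]^p=1$'' does not by itself force the commutator map to vanish; one needs a closer analysis of the bilinear pairing $B/Z(B)\times B/Z(B)\to B'$ and of which combinations of the two possibilities in Fact~\ref{f.rankOneGroups} for $B'$ and $B/B'$ are compatible with a nontrivial pairing. The nonnilpotent case via Zilber's Field Theorem is essentially correct.
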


A nilpotent $p$-group of bounded exponent is called \emph{$p$-unipotent}; such subgroups play an important role in the analysis of soluble groups and their intersections.

The main result for groups of rank $3$ takes the following very satisfying form as a result of the aforementioned efforts of Fr\'econ. Sadly, the situation in rank $4$ is not yet so clear. 

\begin{fact}[{\cite{ChG79}} and {\cite{FrO16}}]\label{f.rankThreeGroups}
A simple group of rank $3$ is isomorphic to $\psl_2(K)$ for some algebraically closed field $K$.
\end{fact}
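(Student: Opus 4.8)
The plan is to split along the presence of involutions: either $G$ has involutions and is identified as $\psl_2(K)$, or $G$ is involution-free and a bad group, whose nonexistence is the content of Fr\'econ's theorem. I would first observe that any simple $G$ of rank $3$ is an $N_\circ^\circ$-group, indeed minimal simple. Every proper definable connected subgroup has rank $1$ or $2$, so by Facts~\ref{f.rankOneGroups} and~\ref{f.rankTwoGroups} it is soluble; in particular, for any connected soluble $A$ the subgroup $N^\circ(A)$ is proper (as $G$ is simple and nonsoluble) and hence soluble. Thus every Borel subgroup of $G$ has rank $1$ or $2$, and these exhaust the proper connected structure.

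\textbf{The involution case.} Suppose $I(G) \neq \emptyset$. For each involution $i$, the centralizer $C^\circ(i)$ is proper, connected, of rank at most $2$, hence soluble; so Fact~\ref{f:DJ} applies and places $G$ in one of the types \cibo$_1$, \cibo$_2$, \cibo$_3$, or $\psl_2(K)$. It then remains to exclude the three \cibo\ configurations when $\rk G = 3$, which I expect to follow from rank and $2$-rank bookkeeping: for \cibo$_2$ the constraint $\rk G = 3\cdot\rk C(i)$ forces $\rk C(i)=1$ with $m_2(G)=2$, a configuration a Brauer--Fowler / genericity-of-involutions estimate should contradict; for \cibo$_1$ and \cibo$_3$ the self-normalizing Borel $C(i)$ would have rank $2$, and comparing the rank of the conjugacy class of $i$ with $\rk G$ should again be incompatible. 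What survives is $G \cong \psl_2(K)$, as in Cherlin's original analysis: one recovers the field $K$ by Zilber's Field Theorem applied to a maximal torus acting on the $p$-unipotent part of a rank-$2$ Borel $B$, and then recognizes the sharply $3$-transitive action on the rank-$1$ coset space $G/B$ to identify $G$.

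\textbf{The degenerate case and the main obstacle.} If $I(G) = \emptyset$, then $G$ is a bad group of rank $3$: every proper definable connected soluble subgroup is nilpotent, the Borel subgroups are conjugate, pairwise disjoint, self-normalizing, and nilpotent of rank $2$, and $G$ acts transitively on the rank-$1$ ``pencil'' of Borels. This is precisely the configuration that resisted elimination from Cherlin's 1979 work until Fr\'econ's paper, and ruling it out is by far the hard part; I would simply invoke that result, which derives a contradiction from the algebraic geometry of the pencil of Borels (a pseudo-plane argument bounding the degree of generic transitivity). Combining the two cases yields $G \cong \psl_2(K)$, completing the proof.
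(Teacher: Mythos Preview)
The paper does not prove this statement at all; it is imported as a known fact with citations to Cherlin's 1979 paper and Fr\'econ's 2016 paper. So there is nothing internal to compare against beyond that attribution. Your two-case split (involutions versus none) and your appeal to Fr\'econ for the degenerate case are exactly what the citations encode.

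Where your proposal diverges is in the involution case: you route through Fact~\ref{f:DJ}, the Deloro--Jaligot \cibo\ trichotomy, which is a 2016 result postdating Cherlin's theorem by nearly four decades. Cherlin's original argument is direct and does not mention \cibo\ at all: one shows a Borel subgroup $B$ has rank $2$, applies Zilber's Field Theorem to the action of a maximal torus on the unipotent radical to produce $K$, and then identifies the $2$-transitive action on the rank-$1$ coset space $G/B$ as that of $\psl_2(K)$; the residual configuration (no rank-$2$ Borel, or no involutions) is the bad case left to Fr\'econ. Your approach is in principle not circular---the $N_\circ^\circ$ analysis does not rely on the rank-$3$ classification---but it is heavy-handed, and more to the point your eliminations of \cibo$_1$, \cibo$_2$, \cibo$_3$ in rank $3$ are not arguments but hopes (``I expect'', ``should contradict'', ``should again be incompatible''). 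Some of these are genuinely easy (e.g.\ in \cibo$_3$ the Borel $C(i)$ would be abelian of rank $2$ and Pr\"ufer $2$-rank $2$, forcing $C(i) \le C(j)$ for $j \in i^\perp$ against distinctness of centralisers), but \cibo$_1$ in rank $3$ needs real work, and you have not supplied it.
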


\begin{fact}[{\cite[Theorem~A]{WiJ14a}}]\label{f.rankFourGroups}
A simple group of rank $4$  is a bad group whose definable proper subgroups have rank at most $1$.
\end{fact}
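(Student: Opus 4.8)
The plan is to collapse the whole statement to one claim about Borel subgroups and then eliminate the surviving cases by a normalizer/conjugacy count feeding into a recognition of $\psl_2$. Write $G$ for the simple rank-$4$ group; being infinite simple it is connected and nonsoluble. First I would record the reduction: \emph{it suffices to show every Borel subgroup of $G$ has rank at most $1$}. Indeed, suppose this is known and let $H \le G$ be a proper definable connected subgroup with $\rk H \ge 2$. If $H$ is soluble it lies in a Borel, forcing $\rk H \le 1$, a contradiction; so $H$ is nonsoluble. Since $\rk H \le 3$ and a connected rank-$2$ group is soluble by Fact~\ref{f.rankTwoGroups}, we have $\rk H = 3$; passing to $H/R(H)$ (soluble radical) yields a quasisimple section whose simple quotient has rank at most $3$, and by Facts~\ref{f.rankTwoGroups} and~\ref{f.rankThreeGroups} that quotient is $\psl_2(K)$, so $H$ contains a connected soluble subgroup of rank $2$ (a Borel of $H$), again contradicting the claim. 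Hence every proper definable connected subgroup has rank at most $1$; since an arbitrary proper definable subgroup $H$ has $H^\circ$ proper connected of the same rank, the bound passes to all proper definable subgroups. By Fact~\ref{f.rankOneGroups} these are abelian-by-finite, hence nilpotent-by-finite, and as $G$ is nonsoluble this exhibits $G$ as a bad group of the asserted form.

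It remains to rule out a Borel $B$ with $\rk B \in \{2,3\}$. The first step is to pin down its normalizer. The group $N_G(B)^\circ$ is connected and contains $B$; it cannot equal $G$ (else $B \trianglelefteq G$, contradicting simplicity), and if it properly contained $B$ it would be a connected rank-$3$ overgroup of $B$ that is either soluble (contradicting maximality of $B$) or nonsoluble. In the nonsoluble case the analysis of the previous paragraph forces it to be a quasisimple group with $\psl_2(K)$ as its simple quotient, which admits no normal soluble subgroup of rank $\ge 2$; this contradicts $B \trianglelefteq N_G(B)^\circ$. Therefore $N_G(B)^\circ = B$, and the orbit–stabilizer computation gives $\rk(B^G) = 4 - \rk B \in \{1,2\}$.

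The engine is then the faithful transitive action of $G$ on the coset space $\Omega := G/B$, which has rank $4 - \rk B \in \{1,2\}$. Using generosity of $B$ I would control the generic rank of the intersections $B \cap B^g$, and translate this into information about the induced action of a point stabilizer on $\Omega$, promoting transitivity to generic $2$-transitivity on a space of small rank. The rank-$1$ structure of maximal tori (Fact~\ref{f.rankOneGroups}), the description of nonnilpotent rank-$2$ Borels as $K_+ \rtimes K^\times$ (Fact~\ref{f.rankTwoGroups}), and Zilber's Field Theorem together manufacture a definable field $K$ and a split rank-$1$ geometry; a recognition criterion for $\psl_2$ then identifies $G \cong \psl_2(K)$. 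Since $\rk \psl_2(K) = 3 \neq 4$, this is the desired contradiction, simultaneously forbidding a large Borel and any involution-bearing rank-$3$ subgroup.

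The hard part will be the genericity/covering step together with its entanglement with involutions. Computing the generic rank of $B \cap B^g$ for a rank-$2$ Borel is delicate, because there the conjugacy family $B^G$ is itself $2$-dimensional and the pairwise overlaps need not be small; and one must separate the torsion-semisimple regime — where a decent torus appears and the geometry of involutions has to be controlled through $2$-Sylow theory and Brauer–Fowler estimates — from the unipotent/degenerate regime. The rank-$3$ Borel case, whose conjugacy family is only $1$-dimensional, is far more rigid and should serve as the warm-up before attacking the rank-$2$ case.
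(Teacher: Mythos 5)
First, a point of reference: the paper does not prove this statement at all---it is quoted as a Fact, with the proof outsourced to \cite{WiJ14a}---so your attempt must be measured against the argument of that cited paper, whose rank-5 analogue is the body of the present one. Your opening reduction (everything follows once Borel subgroups have rank at most $1$, plus the passage from connected to arbitrary definable subgroups and the bad-group conclusion) is sound, but it takes an unnecessarily heavy detour: a rank-$3$ definable subgroup $H$ of a simple rank-$4$ group dies in one line by Hrushovski's theorem (Fact~\ref{fact.Hru}), since $G$ acts faithfully and transitively on $G/H$, a set of rank and degree $1$, forcing $\rk G \le 3$. This is exactly how \cite{WiJ14a} begins (and how the present paper eliminates rank-$4$ subgroups of a rank-$5$ group), and it requires neither Fr\'econ's theorem nor your soluble-radical/quasisimple-section analysis; it also disposes of rank-$3$ Borels instantly, so the entire theorem reduces to eliminating rank-$2$ Borels.

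That is where the genuine gap lies. For a rank-$2$ Borel $B$, the space $\Omega = G/N(B)$ has rank $2$, and your proposed engine---control $B \cap B^g$ generically, promote transitivity to generic $2$-transitivity, then apply ``a recognition criterion for $\psl_2$''---cannot be executed: there is no recognition theorem identifying $\psl_2(K)$ from a generically $2$-transitive action on a rank-$2$ set. Hrushovski's theorem is specifically about rank-$1$ sets, and $\psl_2(K)$ has no faithful transitive action of the required kind on a rank-$2$ homogeneous space (its natural sharply $3$-transitive action is on $\mathbb{P}^1$, of rank $1$); transitive actions on rank-$2$ sets are not classified. Moreover, the generic smallness of $B \cap B^g$ that your plan needs is precisely the hard content, and---as you yourself concede---it ``need not be small.'' In \cite{WiJ14a}, rank-$2$ Borels are killed not by a rank count against $\psl_2$ but by a case division (nilpotent versus $K_+ \rtimes K^\times$), uniqueness principles bounding intersections of Fitting subgroups of distinct Borels, genericity/generosity arguments, and, when involutions are present, $2$-Sylow theory, Brauer--Fowler estimates and the degenerate-type theorem; the rank-$5$ counterpart of this work is Lemmas~\ref{l:uniqueness} and~\ref{l:C(i)}, the \cibo\ trichotomy of Fact~\ref{f:DJ}, Moufang sets, and Bachmann's theorem. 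You explicitly defer all of this as ``the hard part'' and leave it as a plan; since it constitutes essentially the entire mathematical content of the theorem, what you have is an outline with a misdirected key step, not a proof.
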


Finally, the ubiquity of the following result of Hrushovski in the study of groups of small rank is hard to overstate; despite non-trivial intersection with the above, we prefer to state it separately.

\begin{fact}[Hrushovski, see \protect{\cite[Theorem~11.98]{BoNe94}}]\label{fact.Hru}
Suppose that $H$ is a group of finite Morley rank acting faithfully, transitively and definably on a definable  set $X$ of rank and degree $1$. Then $\rk H \le 3$, and if $\rk H >1$, there is a definable algebraically closed field $K$ such that the action of $H$ on $X$ is equivalent to $\agl_1(K) = K_+ \rtimes K^\times$ acting on $\mathbb{A}_1(K) = K$  or $\pgl_2(K)$ acting on $\mathbb{P}^1(K) = K\cup\{\infty\}$.
\end{fact}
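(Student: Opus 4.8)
The plan is to argue by induction on $n = \rk H$, at each stage replacing the action by that of a point stabilizer on a smaller strongly minimal set and interpreting an algebraically closed field by Zilber's Field Theorem. First I would replace $H$ by $H^\circ$: since $X$ has degree $1$ it is connected, so $H^\circ$ is still transitive (and of course faithful), and all three target groups are connected, so nothing is lost. Transitivity on the rank-$1$ set $X$ gives the fundamental rank equation $\rk H_a = \rk H - 1$ for a point $a$. Moreover, whenever $H_a$ has positive rank it must have a cofinite orbit on $X \setminus \{a\}$ — a rank-$0$ orbit would be finite while the generic orbit has rank $1$ — and by degree $1$ this orbit is all of $X \setminus \{a\}$. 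Thus $H_a$ acts faithfully and transitively on the strongly minimal set $X \setminus \{a\}$ with $\rk H_a = n-1$, which is exactly the situation to which the inductive hypothesis applies; the base case $n=1$ is immediate since then $H_a$ is finite, $H$ acts essentially regularly, and no field is asserted.

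For the inductive step I apply the theorem to $H_a$, obtaining $\rk H_a \le 3$ (so $n \le 4$) and an $H_a$-action on $X \setminus \{a\}$ that is regular, affine, or projective according as $n-1$ is $1$, $2$, or $3$. When $n = 2$, the rank-$1$ group $H_a$ is abelian and acts regularly on $X \setminus \{a\}$, so $H$ is sharply $2$-transitive; I would then extract a connected abelian normal subgroup of translations $A$ with $H = A \rtimes H_a$ and apply Zilber's Field Theorem to $A \rtimes H_a^\circ$ to produce an algebraically closed $K$ with $A \cong K_+$, $H_a \hookrightarrow K^\times$, and the action equivalent to $\agl_1(K)$ on $\mathbb{A}_1(K)$. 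When $n = 3$, the stabilizer $H_a \cong \agl_1(K)$ realizes $X \setminus \{a\}$ as the affine line over an interpreted field $K$; adjoining the fixed point $a$ as the point at infinity recovers $\mathbb{P}^1(K)$, and after checking that a second stabilizer $H_{a'}^\circ$ glues over the same field $K$ by comparing the two $\agl_1$-structures on their overlap, the recognition of sharply $3$-transitive groups of finite Morley rank yields $H \cong \pgl_2(K)$ acting on $\mathbb{P}^1(K)$.

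It remains to exclude $n = 4$, and this is where I expect the real difficulty. If $n = 4$ then $H_a \cong \pgl_2(K)$ acts sharply $3$-transitively on $X \setminus \{a\}$, so $H$ would be sharply $4$-transitive on $X$; equivalently, the generic three-point stabilizer $H_{a,b,c}$, which has rank $n-3 = 1$, would have to act transitively on the cofinite set of positions of a fourth point, while the $\pgl_2(K)$-structure attached to $a, b, c$ simultaneously pins every remaining point by its cross-ratio and so forces that same stabilizer to be finite. Converting this tension into a clean contradiction — that is, genuinely bounding the degree of generic transitivity of a faithful action on a strongly minimal set by $3$ — is the crux of the argument: it requires showing that the copy of $\pgl_2(K)$ recovered from three generic points captures the \emph{entire} generic action, leaving no room for extra rank in $H_{a,b,c}$. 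The two field interpretations lean on Zilber's Field Theorem together with the structure of rank-$1$ and sharply $2$-transitive groups, but the genuinely delicate step is this sharpness-and-rigidity argument ruling out $n \ge 4$.
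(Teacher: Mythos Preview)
The paper does not prove this statement at all: it is recorded as a \emph{Fact} with a citation to \cite[Theorem~11.98]{BoNe94} and is used purely as a black box (for instance, immediately after Proposition~\ref{p:GisCibo} to exclude subgroups of rank~$4$). There is therefore no proof in the paper against which to compare your proposal.

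For what it is worth, your outline follows the standard route taken in the cited reference, but two steps are under-argued. First, ``the cofinite orbit is all of $X\setminus\{a\}$'' does not follow from degree~$1$ alone: degree~$1$ gives a \emph{unique} generic $H_a$-orbit, but does not by itself exclude further finite $H_a$-orbits (equivalently, extra fixed points of $H_a^\circ$); one must actually argue $2$-transitivity of $H$, typically by showing that a second fixed point $b$ forces $H_a^\circ = H_b^\circ$ and then deriving a contradiction. Second, in the $n=2$ case the existence of a definable abelian normal ``translation'' subgroup in a sharply $2$-transitive group of finite Morley rank is itself a nontrivial theorem, not something one can simply ``extract''. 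Your instinct that the exclusion of $n\ge 4$ is the genuine crux is correct --- that is precisely Hrushovski's contribution --- but your sketch there remains a heuristic rather than an argument.
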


\section{Reaching the \texorpdfstring{$N_\circ^\circ$}{Noo}-analysis} 


\begin{setup}
Let $G$ be a simple group of Morley rank $5$.
\end{setup}

The goal of the present section is to show that the Deloro-Jaligot analysis of Fact~\ref{f:DJ} applies to $G$; that is, we aim to establish the following proposition.

\begin{proposition}\label{p:GisCibo}
The group $G$ is an $N_\circ^\circ$-group that either has no involutions or is of type \cibo$_1$,  \cibo$_2$, or  \cibo$_3$.
\end{proposition}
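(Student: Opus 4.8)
The plan is to verify the two standing hypotheses of Fact~\ref{f:DJ}. Since $G$ is infinite and simple it is connected and nonsoluble, so it suffices to establish that $G$ is an $N_\circ^\circ$-group and that $C^\circ(i)$ is soluble for every $i \in I(G)$. Granting these, Fact~\ref{f:DJ} places $G$ in one of the types \cibo$_1$, \cibo$_2$, \cibo$_3$, or the algebraic type $G \cong \psl_2(K)$; but the last has rank $3 \neq 5$, so only the \cibo\ types (or the involution-free alternative) survive, which is exactly the assertion.

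The engine for everything is the following observation: \emph{$G$ has no proper definable connected subgroup of rank $4$.} Indeed, if $M$ were such a subgroup, then $X := G/M$ would have rank $1$ and, being the image of the connected group $G$ under a definable surjection, degree $1$. The left translation action of $G$ on $X$ is transitive and definable, and its kernel is the core $\bigcap_g M^g$, which is definable by the descending chain condition and is a proper normal subgroup of $G$, hence trivial by simplicity. Thus $G$ would act faithfully, transitively, and definably on a set of rank and degree $1$, and Fact~\ref{fact.Hru} would force $\rk G \le 3$, a contradiction. Note that this lemma also bounds $\rk C^\circ(i) \le 3$, since $C^\circ(i)$ is a proper (as $Z(G)=1$) connected definable subgroup.

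From this the $N_\circ^\circ$-property follows quickly. Let $A$ be an infinite connected soluble subgroup and put $H := N^\circ(A)$; were $H = G$ then $A$ would be a nontrivial proper definable normal subgroup of $G$, contradicting simplicity, so $H$ is proper and, by the lemma, $\rk H \le 3$. A nonsoluble $H$ cannot have rank $1$ or $2$ by Facts~\ref{f.rankOneGroups} and~\ref{f.rankTwoGroups}, so $\rk H = 3$; but then its connected solvable radical has rank $0$ (otherwise the quotient by it has rank $\le 2$, hence is soluble, making $H$ soluble), which forces the infinite subgroup $A$ into a trivial radical, absurd. Hence $H$ is soluble and $G$ is an $N_\circ^\circ$-group.

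The main obstacle is the solubility of $C^\circ(i)$. Suppose some $C^\circ(i)$ is nonsoluble; by the rank bound and the radical argument just used it has rank $3$ with trivial connected solvable radical, so its finite-centre simple quotient is $\psl_2(K)$ by Fact~\ref{f.rankThreeGroups} and $C^\circ(i) \cong \pssl_2(K)$. First I would note that $C_G(C^\circ(i))$ is finite: otherwise the central product $C^\circ(i)\cdot C_G(C^\circ(i))^\circ$ is a connected definable subgroup of rank $4$ (forbidden by the lemma) or of rank $5$ (forcing $C^\circ(i) \trianglelefteq G$, against simplicity). In particular $i$ lies in the finite group $C_G(C^\circ(i))$, and one is reduced to analysing the geometry of $i$ relative to its component $C^\circ(i)$ -- whether $i$ is the central involution of an $\ssl_2(K)$ or lies outside $C^\circ(i)$. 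Here I expect to combine $2$-Sylow theory and Brauer--Fowler estimates with the identification theorems for groups carrying a standard $\ssl_2(K)$-component; each resolution should yield either Pr\"ufer- or $2$-rank data incompatible with a rank-$5$ ambient group, or an algebraic group of rank exceeding $5$, giving the contradiction. This step, rather than the $N_\circ^\circ$-property, is where the heavy machinery and the real work lie; with it in hand, Fact~\ref{f:DJ} completes the proof as described above.
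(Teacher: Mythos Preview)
Your treatment of the preliminaries is essentially the paper's: the rank-$4$ exclusion via Hrushovski and the $N_\circ^\circ$-verification match (the paper phrases the latter as ``$N^\circ(A)/A$ has rank at most $2$, hence soluble'', but your solvable-radical version is equivalent). The conclusion via Fact~\ref{f:DJ} and the rank mismatch with $\psl_2$ is also the same.

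The genuine gap is the solubility of $C^\circ(i)$. You correctly flag this as the hard step, but what you offer is a plan, not a proof: ``I expect to combine $2$-Sylow theory and Brauer--Fowler estimates with the identification theorems for groups carrying a standard $\ssl_2(K)$-component''. There is no off-the-shelf standard-component identification theorem that applies to an arbitrary rank-$5$ ambient group with an $\ssl_2$ centraliser, so this line does not close by citation; and Brauer--Fowler is not the tool the paper uses here (it appears later, in the \cibo$_1$ analysis). The paper's actual argument runs through an ingredient you never set up: a \emph{uniqueness principle} for Borel subgroups, namely that for distinct Borels $B_1\neq B_2$ one has $(F^\circ(B_1)\cap F^\circ(B_2))^\circ=1$ (Lemma~\ref{l:uniqueness}). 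With that in hand, the paper argues as follows: if $C_i:=C^\circ(i)\cong\ssl_2(K)$ in characteristic $\neq 2$, take $j$ a generic conjugate of $i$; the connected intersection $A=(C_i\cap C_j)^\circ$ is unipotent (no involutions, since $i$ is the unique involution of $C_i$), and $N:=N^\circ(A)$ is a rank-$3$ Borel with $A\le F^\circ(N)$. A third generic involution $k$ produces another unipotent $A_k\le F^\circ(N)$, and the same analysis gives $A_k\le F^\circ(N_k)$ for $N_k:=N^\circ(A_k)$; the uniqueness lemma then forces $N=N_k$, so $k$ normalises $N$, contradicting genericity. None of this is visible in your sketch, and without the Borel-intersection lemma (or some substitute) the contradiction does not materialise.
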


First notice that by Fact~\ref{fact.Hru}, $G$ has no definable subgroups of rank $4$.
Consequently, if $A \neq 1$ is a connnected soluble subgroup of $G$, then $N^\circ(A)/A$ has rank at most $2$, so $N^\circ(A)$ is soluble by Fact~\ref{f.rankTwoGroups}. Combining this with the classification of even and mixed type simple groups from \cite{ABC08}, we easily arrive at the following lemma.

\begin{lemma}\label{l:GisOddOrDeg}
The group $G$  is an $N_\circ^\circ$-group of odd or degenerate type.
\end{lemma}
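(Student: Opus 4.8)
The plan is to verify the two assertions of the lemma separately, drawing on the rank restriction already recorded, namely that $G$ has no definable subgroup of rank $4$. For the $N_\circ^\circ$-property, first note that $G$, being infinite and simple, is connected, so any nontrivial connected soluble subgroup $A$ has $\rk A \ge 1$. The crucial preliminary step is to observe that $N^\circ(A) \neq G$: were $N^\circ(A) = G$, then $A$ would be normalized by all of $G$ and hence be a proper nontrivial definable normal subgroup (proper because $A$ is soluble while $G$ is not), contradicting simplicity. Thus $\rk N^\circ(A) \le 4$, and the absence of rank-$4$ subgroups forces $\rk N^\circ(A) \le 3$. The connected quotient $N^\circ(A)/A$ then has rank at most $2$, so it is soluble by Fact~\ref{f.rankTwoGroups} (or Fact~\ref{f.rankOneGroups} in lower rank); since $A$ is soluble, so is $N^\circ(A)$. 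As $A$ was an arbitrary nontrivial connected soluble subgroup, this is precisely the defining condition for $G$ to be an $N_\circ^\circ$-group.

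For the type, I would invoke the Alt\i{}nel--Borovik--Cherlin classification. A simple group of finite Morley rank falls into exactly one of the four types --- even, odd, mixed, or degenerate --- according to the structure of the connected component of a Sylow $2$-subgroup, so it suffices to exclude even and mixed type. By \cite{ABC08} there are no simple groups of mixed type, which disposes of that case outright. If $G$ were of even type, then again by \cite{ABC08} it would be isomorphic to a Chevalley group $H(K)$ over an algebraically closed field $K$ of characteristic $2$; since $\rk K = 1$, the Morley rank of such a group equals the dimension of the underlying algebraic group. The plan is then to observe that no simple algebraic group has dimension $5$ --- the admissible dimensions begin $3, 8, 10, 14, \dots$ --- so $G$ cannot be of even type at rank $5$. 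Combining the two exclusions shows $G$ is of odd or degenerate type, which, together with the first paragraph, completes the lemma.

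I do not anticipate a genuine obstacle here, as the heavy lifting is done by the cited facts; the work is essentially bookkeeping. The two points meriting care are, first, ensuring that the $N_\circ^\circ$-condition is read as quantifying over \emph{nontrivial} connected soluble subgroups (the trivial subgroup being excluded, since $N^\circ(1) = G$ is not soluble), and second, quoting the even-type classification in the form actually proved. The only place where the hypothesis $\rk G = 5$ is used decisively is the dimension count excluding even type, and it is worth noting that this same count excludes every rank between $4$ and $7$.
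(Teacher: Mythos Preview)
Your proposal is correct and follows essentially the same approach as the paper: both argue the $N_\circ^\circ$-property by noting that $N^\circ(A)$ is a proper subgroup (by simplicity) of rank at most $3$ (since rank $4$ is excluded), whence $N^\circ(A)/A$ has rank at most $2$ and is soluble, and both invoke \cite{ABC08} to eliminate even and mixed type. You simply spell out the dimension count for even type that the paper leaves implicit.
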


To apply the Deloro-Jaligot analysis to $G$, it remains to prove solubility of centralisers of involutions---this, however, this requires knowledge of intersections of Borel subgroups, which is where we begin. 

\subsection{Uniqueness principles}\label{s:uniqueness}
Since $G$ is an $N_\circ^\circ$-group, $G$ enjoys certain general so-called ``uniqueness principles'' \cite[Fact~8]{DeJa16}. However, the small rank context allows for a much stronger version with the effect of completely short-cutting Burdges' elaborate unipotence theory, on which we shall therefore not dwell.

Recall that the connected Fitting subgroup $F^\circ(H)$ of a group $H$ is its characteristic subgroup generated by all definable, connected, normal nilpotent subgroups of $H$; when $H$ has finite Morley rank, $F^\circ(H)$ is definable and nilpotent \cite[Theorem~7.3]{BoNe94}. Also, for soluble connected $H$ of finite Morley rank, one has $H' \leq F^\circ(H)$ \cite[Corollary~9.9]{BoNe94}, and $C_H^\circ(F^\circ(H)) \leq F^\circ(H)$ \cite[Proposition~7.4]{BoNe94}; finally, $H/F^\circ(H)$ is a divisible abelian group \cite[Theorem~9.21]{BoNe94}.
Borel subgroups were defined at the beginning of \S\ref{s:NooCiBo}.

\begin{lemma}\label{l:uniqueness}
If $B_1 \neq B_2$ are two Borel subgroups of $G$, then $(F^\circ(B_1) \cap F^\circ(B_2))^\circ = 1$. 
\end{lemma}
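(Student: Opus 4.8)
The plan is to argue by contradiction, choosing a counterexample that maximizes the rank of the offending intersection. Suppose some pair of distinct Borel subgroups satisfies $(F^\circ(B_1)\cap F^\circ(B_2))^\circ\neq1$, and among all such pairs fix one for which $Q:=(F^\circ(B_1)\cap F^\circ(B_2))^\circ$ has maximal rank; write $U_i:=F^\circ(B_i)$, so that $Q\le U_i$ with each $U_i$ nilpotent. I will use three facts repeatedly: (a) since $G$ has no definable subgroups of rank $4$, the normalizer $N^\circ(A)$ of any nontrivial connected soluble $A$ is again soluble (as noted before Lemma~\ref{l:GisOddOrDeg}), so every nontrivial connected soluble subgroup lies in a Borel subgroup and every Borel subgroup has rank at most $3$; (b) the normalizer condition in the connected nilpotent group $U_i$, namely $N^\circ_{U_i}(V)\supsetneq V$ for every proper definable connected $V<U_i$; and (c) that $N^\circ(F^\circ(B))=B$ for every Borel subgroup $B$, because $F^\circ(B)$ is characteristic in $B$ while $N^\circ(F^\circ(B))$ is, by (a), a connected soluble overgroup of the maximal subgroup $B$. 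I also use the self-centralizing relation $C^\circ_B(F^\circ(B))\le F^\circ(B)$ recorded above.

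The bulk of the argument is a case analysis on $\rk Q$, the point being to manufacture inside $N:=N^\circ(Q)$ (soluble by (a), hence contained in some Borel subgroup $B_3$) a nilpotent subgroup strictly larger than $Q$ that lands in $F^\circ(B_3)$ and meets $U_1$; maximality then forces $B_1=B_3$, and symmetrically $B_2=B_3$, against $B_1\neq B_2$. When $\rk Q=3$ one gets $Q=B_1=B_2$ outright. When $\rk Q=2$, a finite check on the possible values of $\rk B_i$ and $\rk U_i$ (each in $\{2,3\}$), using (b), (c) and the self-centralizing relation, forces $N^\circ(Q)$ to coincide with both $B_1$ and $B_2$: whenever $U_i=Q$ one has $B_i=N^\circ(F^\circ(B_i))=N^\circ(Q)$ directly, and whenever $Q\subsetneq U_i$ the subgroup $N^\circ_{U_i}(Q)$ has rank $3$, hence equals the rank-$3$ Borel subgroup $B_i$, which is then swallowed by $N^\circ(Q)$.

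The rank-one case is the delicate one and is where the new input is needed. Here I first record that if $P$ is connected nilpotent with $Q\le P$, then $Q$, being connected of rank $1$ and normal in the nilpotent group $P$, satisfies $[P,Q]\subsetneq Q$, whence $[P,Q]=1$ and $Q\le Z(P)$. Applying this to $P_i:=N^\circ_{U_i}(Q)$ (nilpotent as a subgroup of $U_i$, and containing $Q$) gives $P_i\le C^\circ(Q)\le N$. If some $U_i=Q$, then $B_i=N^\circ(Q)$ as before; should the other factor satisfy $Q\subsetneq U_{3-i}$, then $P_{3-i}$ is a nilpotent subgroup of rank at least $2$ lying in $C^\circ(Q)\cap B_i=C^\circ_{B_i}(F^\circ(B_i))\le F^\circ(B_i)=Q$, a contradiction. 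So one reduces to $Q\subsetneq U_i$ for both $i$, with $P_1,P_2\le C^\circ(Q)$ nilpotent of rank at least $2$. A rank count on $N$ then disposes of $\rk N=2$ (which would force $P_1=P_2=N\le U_1\cap U_2$, contradicting maximality), leaving $N$ a rank-$3$ Borel subgroup with $Q\trianglelefteq N$, hence $Q\le F^\circ(N)$.

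The main obstacle is precisely this residual configuration: $\rk Q=1$, $N:=N^\circ(Q)$ a Borel subgroup of rank $3$, and $P_1\le C^\circ(Q)\cap B_1$ nilpotent of rank $2$. One wants $P_1\le F^\circ(N)$, for then $(U_1\cap F^\circ(N))^\circ\supseteq P_1$ has rank $2$, contradicting the maximality of $\rk Q$. This is immediate when $F^\circ(N)$ has rank $3$ (so $N$ is nilpotent and $P_1\le N=F^\circ(N)$) or rank $1$ (where $C^\circ_N(F^\circ(N))\le F^\circ(N)$ collapses $P_1$ into $Q$); the hard subcase is $\rk F^\circ(N)=2$ with $Q\le Z(N)^\circ$, in which $P_1$ is an a priori unconstrained rank-$2$ nilpotent subgroup of the rank-$3$ nonnilpotent group $N$. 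I expect to close it by analyzing $C^\circ(Q)$ directly---showing that a rank-$2$ nilpotent subgroup of $N$ centralizing the central subgroup $Q$ must lie in $F^\circ(N)$---and, if necessary, by invoking the odd-or-degenerate type dichotomy of Lemma~\ref{l:GisOddOrDeg} to constrain such a Borel subgroup. It is this last step, rather than the rank bookkeeping, where the real content of the lemma lies.
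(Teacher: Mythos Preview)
Your argument is incomplete, and the gap you yourself flag is genuine.  In the residual configuration you isolate---$\rk Q=1$, $N:=N^\circ(Q)$ a rank-$3$ Borel subgroup with $\rk F^\circ(N)=2$ and $Q=Z^\circ(N)$, and $P_1\le N$ nilpotent of rank~$2$ containing $Q$---you hope to show that $P_1\le F^\circ(N)$ by ``analyzing $C^\circ(Q)$ directly.''  That statement is simply false as an abstract claim about such an $N$.  Think of $N$ as the upper-triangular Borel of $\mathrm{GL}_2(K)$: then $Q$ is the scalar subgroup, $F^\circ(N)$ is scalars times the unipotent radical, and the full diagonal torus is a rank-$2$ abelian subgroup of $N$ centralizing $Q$ yet not contained in $F^\circ(N)$.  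So no amount of internal analysis of $C^\circ(Q)$ inside $N$ will force $P_1$ into $F^\circ(N)$; extra information about where $P_1$ comes from (namely $P_1\le F^\circ(B_1)$) must be used, and your outline does not indicate how.  Your fallback of ``invoking the odd-or-degenerate type dichotomy'' is gesturing in the right direction (torsion is indeed the crux), but the dichotomy by itself is not enough: what is needed is the \emph{rigidity of decent tori}.

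The paper organizes the proof differently and this is where the real content lives.  Rather than maximizing $\rk Q$, it shows directly that $X:=(F^\circ(B_1)\cap F^\circ(B_2))^\circ$ cannot be normal in either $B_i$.  Supposing $X\trianglelefteq B_1$, one finds (via the normalizer condition and Fact~\ref{f.rankTwoGroups}) that $\rk X=1$, that $X$ is central in $B_1$, and that $K_2:=N_{F^\circ(B_2)}(X)$ is a rank-$2$ subgroup of $B_1$ covering the toral quotient $B_1/F^\circ(B_1)$.  A torsion-lifting argument shows $K_2$ has at most one infinite definable subgroup without divisible torsion, which (if it exists) is characteristic in $K_2$; since $K_2\trianglelefteq B_2$ but $X\not\trianglelefteq B_2$, $X$ is not characteristic in $K_2$, hence $X$ contains divisible torsion.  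Then $X$ is a decent torus, so by rigidity $X$ is central in $B_2$---contradicting $X\not\trianglelefteq B_2$.  Once ``normal in one'' is excluded, the ``normal in neither'' case collapses immediately: $N=N^\circ(X)$ is a rank-$3$ Borel with $X\trianglelefteq N$, so the pair $(B_1,N)$ violates what was just proved.  That torus-rigidity step is exactly the missing idea in your hard subcase.
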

\begin{proof}
Let $X = (F^\circ(B_1) \cap F^\circ(B_2))^\circ$, and suppose that $X \neq 1$.

First notice that $X$ cannot be normal in both $B_1$ and $B_2$. Otherwise, since $G$ is an $N_\circ^\circ$-group and by definition of a Borel subgroup, one has $B_1 = N^\circ(X) = B_2$, a contradiction.

As a matter of fact $X$ cannot be normal in either. If say $X \trianglelefteq B_1$, then $X$ cannot be normal in $B_2$, so $X < F^\circ(B_2)$. By the normaliser condition \cite[Lemma 6.3]{BoNe94}, one has $K_2 := N_{F^\circ(B_2)}(X) > X$. Of course $K_2 \leq N^\circ(X) = B_1$. If $K_2 \trianglelefteq B_1$ then $K_2 \leq F^\circ(B_1)$, against the definition of $X$. In particular $B_1$ has rank $3$ and is non-nilpotent, $K_2$ has rank $2$, and $X$ has rank $1$. Now $X$ is easily (or classically: \cite[\S6.1, Exercise~5]{BoNe94}) seen to be central in $K_2$. Moreover, if $F^\circ(B_1) = X$, then $B_1' \leq F^\circ(B_1) = X \leq K_2$ so $K_2 \trianglelefteq B_1$, a contradiction as we know. This shows that $F^\circ(B_1)$ has rank $2$ as well, and therefore $X$ is central in $\langle K_2, F^\circ(B_1)\rangle = B_1$.
Since $B_1$ is not nilpotent, the rank $2$ factor group $\beta = B_1/X$ is not either, so by Fact~\ref{f.rankTwoGroups}, its structure is known modulo a finite centre: isomorphic to some $K_+\rtimes K^\times$. Since $K_2 \not \leq F^\circ(B_1)$, $K_2$ covers $\beta/F(\beta)$, so $K_2$ contains an infinite divisible torsion subgroup. Thus, by a standard ``torsion-lifting'' argument \cite[\S5.5, Exercise~11]{BoNe94}, there is at most one definable infinite subgroup of $K_2$ containing no divisible torsion, so if it exists, it must be definably characteristic in $K_2$.

Now, still assuming $X \trianglelefteq B_1$, recall that $X$ cannot be normal in $B_2$. However, whether $B_2$ is nilpotent or not, one has $K_2 \trianglelefteq B_2$, so $X$ cannot be definably characteristic in $K_2$. Thus, as observed above, $X$ must contain divisible torsion. Since $\rk X = 1$, $X$ is a decent torus so is contained in the (unique) maximal decent torus of $F^\circ(B_2)$. By the so-called rigidity of tori \cite[Theorem~6.16]{BoNe94} (or \cite[\S9.2, Exercise~3]{BoNe94}), $X$ is central in $B_2$. This is a contradiction, and we conclude that $X$ is normal in neither $B_1$ nor $B_2$.

Consider $N := N^\circ(X)$. As above $K_2 > X$ and $K_1 := N^\circ_{F^\circ(B_1)}(X) > X$, so $N = \langle K_1, K_2\rangle$ is a rank $3$ soluble group. It is a Borel subgroup in which $X = (F^\circ(B_1) \cap F^\circ(N))^\circ$ is normal, a final contradiction.
\end{proof}

\begin{corollary}\label{c:uniqueness}
If $B$ is a rank $3$ Borel subgroup of $G$, then $\rk F^\circ(B) = 2$.
\end{corollary}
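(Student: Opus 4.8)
The plan is to set $F := F^\circ(B)$ and pin down $\rk F$ using the general structure facts for connected soluble groups recalled just before Lemma~\ref{l:uniqueness}: $F$ is definable and nilpotent, $B' \le F$, $B/F$ is divisible abelian, and $C_B^\circ(F) \le F$. Since $\rk B = 3$ and $F \le B$, the only possibilities are $\rk F \in \{0,1,2,3\}$, and I want to eliminate everything but $2$. The value $0$ goes immediately: if $F = 1$ then $C_B^\circ(F) = C_B^\circ(1) = B \le F = 1$, contradicting $\rk B = 3$. So $\rk F \ge 1$, and it remains to discard $\rk F = 1$ and $\rk F = 3$.

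First I would rule out $\rk F = 1$ by a field-theoretic argument. Here $F$ is connected of rank $1$, hence abelian by Fact~\ref{f.rankOneGroups}, so $F \le C_B(F)$; combined with $C_B^\circ(F) \le F$ this gives $C_B^\circ(F) = F$. Thus $Q := B/C_B(F)$ has rank $2$, and since $C_B(F) \ge F$ it is a quotient of the abelian group $B/F$, so $Q$ is abelian and acts faithfully on $F$. As $F$ is connected of rank $1$ it is automatically $Q$-minimal, so Zilber's Field Theorem \cite[Theorem~9.1]{BoNe94} produces an algebraically closed field $K$ with $F \cong K_+$ and $Q \hookrightarrow K^\times$. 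Then $\rk Q \le \rk K^\times = \rk F = 1$, contradicting $\rk Q = 2$. This settles $\rk F \neq 1$ cleanly.

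The remaining case $\rk F = 3$, where $F = B$ and $B$ is \emph{nilpotent}, is the main obstacle, since it cannot be excluded by Lemma~\ref{l:uniqueness} alone: for an abelian Borel every connected $X \le B$ satisfies $N^\circ(X) = B$, so no second Fitting subgroup ever meets $B$, and the configuration resembles a genuine bad-group piece. My plan is to combine genericity with the geometry of involutions. Because $B = F^\circ(B)$ and $N^\circ(B) = B$ by maximality, Lemma~\ref{l:uniqueness} shows that distinct conjugates of $B$ are generically disjoint; since the corank of $B$ is $5 - 3 = 2$, the union $\bigcup_g B^g$ is then generic, so $B$ is a generous self-normalizing nilpotent subgroup, i.e.\ a generous Carter subgroup of $G$. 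I would then derive a contradiction from Lemma~\ref{l:GisOddOrDeg}: in odd type $B$ contains a nontrivial central decent torus $T$, so every involution $i \in T$ has $B \le C(i)$, and the genericity/fusion constraints coming from generosity together with the rigidity of tori \cite[Theorem~6.16]{BoNe94} should force an incompatible centraliser structure; in degenerate type one argues instead from the absence of involutions. I expect this nilpotent elimination to be the delicate point, as it is exactly where soft genericity stops working and the involution-theoretic input of the paper becomes essential; with it in hand, $\rk F = 2$ is the only surviving possibility.
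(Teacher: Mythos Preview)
Your treatment of $\rk F \in \{0,1\}$ is fine and matches the paper's: Zilber's Field Theorem kills $\rk F = 1$ exactly as you describe.

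The real issue is your handling of $\rk F = 3$. You assert that this case ``cannot be excluded by Lemma~\ref{l:uniqueness} alone'' and then launch into a sketchy generosity/involution argument that you yourself flag as incomplete. In fact Lemma~\ref{l:uniqueness} \emph{does} kill it immediately, once paired with the elementary rank inequality for products of subgroups: for any two definable subgroups $B_1, B_2 \le G$ one has
\[
\rk(B_1 \cap B_2) \;=\; \rk B_1 + \rk B_2 - \rk(B_1 B_2) \;\ge\; \rk B_1 + \rk B_2 - \rk G.
\]
If $B$ is nilpotent of rank $3$ then $B = F^\circ(B)$, and for any $g \notin N(B)$ the conjugate $B^g$ is a distinct Borel with $B^g = F^\circ(B^g)$; the inequality gives $\rk(B \cap B^g) \ge 3 + 3 - 5 = 1$, so $(F^\circ(B) \cap F^\circ(B^g))^\circ \neq 1$, contradicting Lemma~\ref{l:uniqueness}. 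This is precisely the paper's two-line argument.

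What is striking is that you essentially carry out the first half of this yourself: you write ``Lemma~\ref{l:uniqueness} shows that distinct conjugates of $B$ are generically disjoint''. That is already the contradiction, since two rank~$3$ subgroups of a rank~$5$ group can \emph{never} have finite intersection. You compute the corank $5 - 3 = 2$ in the very next clause but use it to estimate $\rk\bigcup_g B^g$ rather than $\rk(B \cap B^g)$. So the gap is not a missing tool but a missed observation; the entire Carter/torality/odd-vs-degenerate programme you outline is unnecessary.
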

\begin{proof}
Rank considerations ensure that $B$ is not nilpotent, as otherwise $B = F^\circ(B)$ and any conjugate would have an infinite intersection, contradicting the previous lemma. Hence, $\rk F^\circ(B) \le 2$. Additionally, by Zilber's Field Theorem and the aforementioned fact that $C_H^\circ(F^\circ(H)) \leq F^\circ(H)$ for connected soluble $H$ of finite Morley rank, one has $\rk F^\circ(B)\neq 1$.
\end{proof}

\begin{corollary}\label{c:BorelIntersections}
If $B_1 \neq B_2$ are two rank $3$ Borel subgroups of $G$, then either $\rk(B_1 \cap B_2) = 1$, or there exist commuting involutions $i$ and $j$ for which $B_1 = C^\circ(i)$ and $B_2 = C^\circ_2(j)$.
\end{corollary}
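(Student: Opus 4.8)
The plan is to pin down $\rk(B_1\cap B_2)$ by sandwiching it between $1$ and $2$, and then to show that the extremal value $2$ is exactly what manufactures the commuting involutions. For the lower bound I would use nothing more than the product-rank formula: the multiplication map $B_1\times B_2\to G$ has all fibres of rank $\rk(B_1\cap B_2)$, so $\rk(B_1B_2)=\rk B_1+\rk B_2-\rk(B_1\cap B_2)\le\rk G$, whence $\rk(B_1\cap B_2)\ge 3+3-5=1$. The upper bound is immediate: were the intersection of rank $3$, its connected component would be all of the connected rank-$3$ group $B_1$, forcing $B_1\le B_2$ and, by maximality, $B_1=B_2$. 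So $\rk(B_1\cap B_2)\in\{1,2\}$, and I only need to treat the case $\rk(B_1\cap B_2)=2$.

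Set $D=(B_1\cap B_2)^\circ$ with $\rk D=2$ and $U_k=(D\cap F^\circ(B_k))^\circ$. Using $\rk F^\circ(B_k)=2$ (Corollary~\ref{c:uniqueness}) and the same product-rank formula inside $B_k$, I would first get $\rk U_k\ge 2+2-3=1$; since $U_1\cap U_2$ is finite (its connected component lies in $(F^\circ(B_1)\cap F^\circ(B_2))^\circ=1$ by Lemma~\ref{l:uniqueness}), the two rank-$1$ groups $U_1,U_2$ sit in the rank-$2$ group $D$ with finite intersection, forcing $\rk U_1=\rk U_2=1$ and $U_1\ne U_2$. As $F^\circ(B_k)\trianglelefteq B_k\ge D$, each $U_k$ is normal in $D$, so $[U_1,U_2]$ is a connected subgroup of the finite group $U_1\cap U_2$, hence trivial; thus $D=U_1U_2$ is abelian of rank $2$. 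The same rank count gives $B_k=F^\circ(B_k)D$.

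The heart of the matter is to locate the involutions, and this is where I would bring in Zilber's Field Theorem. Since $U_2\not\le F^\circ(B_1)$ (otherwise the connected $U_2$ would lie in $(F^\circ(B_1)\cap F^\circ(B_2))^\circ=1$) while $C^\circ_{B_1}(F^\circ(B_1))\le F^\circ(B_1)$, the rank-$1$ group $U_2$ acts nontrivially on the rank-$2$ nilpotent group $F^\circ(B_1)$; Zilber's Field Theorem then yields an algebraically closed field $K$ with $U_2\cong K^\times$ acting as scalars on a section of $F^\circ(B_1)$. Here $\charac K\ne 2$, for otherwise that section would be an infinite elementary abelian $2$-group, putting $G$ in even type against Lemma~\ref{l:GisOddOrDeg}; hence $-1\in K^\times$ furnishes an involution $j\in U_2$. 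Symmetrically the action of $U_1$ on $F^\circ(B_2)$ produces an involution $i\in U_1$. To finish, I would note that $i\in U_1\le F^\circ(B_1)$ is torsion in a connected nilpotent group, hence central in $F^\circ(B_1)$; lying in the abelian group $D$ it also centralises $D$; so $i$ centralises $B_1=F^\circ(B_1)D$, i.e.\ $i\in Z(B_1)$. Then $B_1\le C^\circ(i)$, and since $G$ has no definable subgroup of rank $4$ and is simple, $\rk C^\circ(i)=3$, giving $B_1=C^\circ(i)$; likewise $B_2=C^\circ(j)$. Finally $i,j\in D$ commute, and $i\ne j$ because $B_1\ne B_2$.

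The step I expect to be the main obstacle is the extraction and correct placement of the involutions: verifying that the rank-$1$ ``torus part'' of each Borel really acts through a field (so that an involution appears at all), ruling out even type to secure $\charac K\ne 2$, and then confirming that the resulting involution lies in the centre of the \emph{correct} Borel so that the rank bound upgrades $B_k\le C^\circ(\cdot)$ to an equality. The bookkeeping that $U_1,U_2$ are distinct rank-$1$ normal subgroups of an abelian $D$ is precisely what makes Zilber's Field Theorem applicable, so Lemma~\ref{l:uniqueness} is doing the essential work behind the scenes.
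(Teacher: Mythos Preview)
Your proof is correct and follows essentially the same route as the paper: bound the rank of the intersection, set $D=(B_1\cap B_2)^\circ$ of rank $2$, extract the rank-$1$ pieces $U_k=(D\cap F^\circ(B_k))^\circ$, show $D$ is abelian and $B_k=F^\circ(B_k)\cdot D$, produce a field from the action of $U_2$ on $F^\circ(B_1)$, and use odd type to get the involutions. The paper's only cosmetic differences are that it proves abelianness of $D$ via the embedding $D\hookrightarrow B_1/F^\circ(B_1)\times B_2/F^\circ(B_2)$ and extracts the field from Cherlin's rank-$2$ structure theorem applied to $B_1/U_1$ rather than invoking Zilber directly.

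One small correction: the clause ``torsion in a connected nilpotent group, hence central in $F^\circ(B_1)$'' is not a valid justification as written (think of $p$-unipotent elements). What you actually need is either that a connected rank-$1$ subgroup of a connected nilpotent rank-$2$ group is always central (as the paper uses), or that in odd type any involution in a connected nilpotent group lies in its maximal $2$-torus, which is central by rigidity. Either fix is immediate.
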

\begin{proof}
As $G$ has rank $5$, $\rk(B_1 \cap B_2) \ge 1$. Now, assume that $H:= (B_1 \cap B_2)^\circ$ has rank $2$. 
Consider the canonical map from $H$ to $B_1/F^\circ(B_1) \times B_2/F^\circ (B_2)$; by Lemma~\ref{l:uniqueness} it has a finite kernel. Thus, the connected group $H'$ is finite, so $H$ is abelian. Also, $B_1 = H\cdot F^\circ(B_1)$ since otherwise $H = F^\circ(B_1)$ implying that the rank $3$ group $B_2$ contains the rank $2$ subgroups $F^\circ(B_1)$ and $F^\circ(B_2)$, which by Lemma~\ref{l:uniqueness} have a finite intersection.

Set $A_1 := (H\cap F^\circ(B_1))^\circ$ and $A_2 := (H\cap F^\circ(B_2))^\circ$. By Lemma~\ref{l:uniqueness}, $(A_1 \cap A_2)^\circ = 1$. Since $F^\circ(B_1)$ has rank $2$,  $A_1$ must be central in $F^\circ(B_1)$, and as $H$ is abelian, we find that $A_1$ is central in $B_1 = H\cdot F^\circ(B_1)$. Moreover, $B_1/A_1$ must be of the form $K_+\rtimes K^\times$ modulo a finite centre, so $H/A_1 \cong K^\times$ for some algebraically closed field $K$ of characteristic not $2$ (using  Lemma~\ref{l:GisOddOrDeg}).
But, $H = A_1 \cdot A_2$, so $A_2$ contains some involution $j$, and $B_2 = C^\circ(j)$. An analogous argument shows that $B_1 = C^\circ(i)$ for some involution $i\in A_1$, which commutes with $j$, so we are done.
\end{proof}

\subsection{Solubility of \texorpdfstring{$C^\circ(i)$}{C(i)}}\label{s:solubilityC}

\begin{lemma}\label{l:C(i)}
For $i \in I(G)$, $C^\circ(i)$ is soluble.
\end{lemma}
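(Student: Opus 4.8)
The plan is to argue by contradiction. First I would note that $C^\circ(i)$ is a proper definable connected subgroup of $G$ (proper since $G$ is simple and infinite, so $i \notin Z(G)$), and by Fact~\ref{fact.Hru} it has no rank-$4$ subgroups; in particular $\rk C^\circ(i) \le 3$. A connected group of rank $\le 2$ is soluble by Facts~\ref{f.rankOneGroups} and~\ref{f.rankTwoGroups}, so it suffices to rule out the case that $H := C^\circ(i)$ is nonsoluble of rank exactly $3$. Assume this for contradiction.

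Next I would pin down the isomorphism type of $H$. Its connected soluble radical is a connected normal soluble subgroup of rank $\le 3$; if it were nontrivial, the quotient of $H$ by it would be nonsoluble of rank $\le 2$, which is impossible, so the radical is trivial. The derived subgroup $H'$ is connected and normal, and if $H' \ne H$ then $\rk H' \le 2$, making $H'$ soluble and hence $H$ soluble; thus $H$ is perfect. The centre $Z(H)$ is then finite (a positive-rank centre would lie in the trivial radical), and $H/Z(H)$ has no proper infinite definable normal subgroup, so it is simple of rank $3$ and, by Fact~\ref{f.rankThreeGroups}, isomorphic to $\psl_2(K)$ for an algebraically closed field $K$. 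Hence $H$ is quasisimple with $H \cong \psl_2(K)$ or $\ssl_2(K)$; since $G$ is of odd or degenerate type by Lemma~\ref{l:GisOddOrDeg}, $H$ carries no infinite elementary abelian $2$-group and so $\charac K \ne 2$. Finally, $N_G(H)$ is a proper (as $G$ is simple) definable subgroup containing the rank-$3$ group $H$, so $N_G^\circ(H) = H$ and $C_G(H)$ is finite; note that $i \in C_G(H)$, and $i$ centralises every torus and every Borel subgroup of $H$.

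The main obstacle is to extract a contradiction from this configuration, and here I would lean on the geometry of involutions together with the uniqueness principles. Inside $H$ choose a maximal torus $T$ (rank $1$, containing a Pr\"ufer $2$-group) and the two opposite Borel subgroups $B_\pm = U_\pm \rtimes T$ of $H$, which are connected soluble of rank $2$ and are centralised by $i$. The aim is to build from $B_\pm$, $T$, and the external involution $i$ a pair of distinct rank-$3$ Borel subgroups of $G$ whose intersection is too large, contradicting Corollary~\ref{c:BorelIntersections} (or Lemma~\ref{l:uniqueness}): for instance, by showing that the normaliser $N_G^\circ(U_\pm)$, which is soluble by the opening observation of this section and contains $B_\pm$, is a rank-$3$ Borel (so $\rk F^\circ = 2$ by Corollary~\ref{c:uniqueness}), and that $N_G^\circ(U_+)$ and $N_G^\circ(U_-)$ share the torus $T$ together with further centralising data from $i$, forcing their intersection to have rank $2$ without their being the connected centralisers of a pair of commuting involutions. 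An alternative endgame, appropriate to the $\psl_2(K)$ case, is to observe that a four-subgroup $V \le H$ together with $i$ generates an elementary abelian $2$-group of rank $3$ and to run a rank/Brauer--Fowler count on the involutions of $G$ and their rank-$3$ centralisers to contradict $\rk G = 5$. I expect this last step---converting the rigidity of $H \cong \psl_2(K)$ or $\ssl_2(K)$ into an impossible arrangement of Borel subgroups or $2$-tori---to be the genuinely hard part, the earlier reduction and structure theory being routine.
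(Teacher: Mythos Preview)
Your reduction is sound as far as it goes, but it misses one refinement that turns out to matter: by torality \cite[Theorem~3]{BuCh08} the involution $i$ lies in a $2$-torus of $G$, hence in $H = C^\circ(i)$, hence in $Z(H)$. This forces $H \cong \ssl_2(K)$ rather than $\psl_2(K)$, so $i$ is the \emph{unique} involution of $H$ and the Sylow $2$-subgroup of $G$ is that of $\ssl_2(K)$. This immediately kills your Option~B: there is no four-group in $H$ to adjoin $i$ to.

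Your Option~A, as written, does not close either. You have no reason to expect $N_G^\circ(U_\pm)$ to have rank~$3$: it visibly contains the rank-$2$ Borel $U_\pm T$ of $H$, but nothing else is forced into it (note $i \in T$ already, so ``further centralising data from $i$'' adds nothing). And even granting rank~$3$, the only common piece you have identified is $T$, which has rank~$1$; Corollary~\ref{c:BorelIntersections} then gives no contradiction. So the endgame is a genuine gap, not just an unwritten routine step.

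The paper's argument is different in spirit. Once $H \cong \ssl_2(K)$ is known, involutions of $G$ are conjugate. Take $j$ a \emph{generic} conjugate of $i$ and set $A = (C_i \cap C_j)^\circ$; since the only involution in $C_i$ is $i$ (and in $C_j$ is $j$), $A$ has no involutions and is therefore a unipotent subgroup of both $C_i$ and $C_j$. Its normaliser $N := N^\circ(A)$ then contains the two \emph{distinct} rank-$2$ Borels $N_{C_i}(A)$ and $N_{C_j}(A)$, so $N$ is a rank-$3$ Borel of $G$ with $A \leq F^\circ(N)$ and $F^\circ(N)$ without involutions. Now take a third involution $k$ generic over $i,j$: one checks $(C_k \cap N)^\circ$ is again unipotent and lands in $F^\circ(N)$; by conjugacy of involutions (and of unipotent subgroups inside each $C_\bullet$) the analogous $N_k := N^\circ((C_k \cap N)^\circ)$ has the same shape, and Lemma~\ref{l:uniqueness} forces $N = N_k$. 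Hence $k$ normalises $N$, contradicting genericity of $k$. The key idea you were missing is to manufacture the rank-$3$ Borel from the intersection of \emph{two} centraliser copies of $\ssl_2$ rather than from a single one, and then to run a genericity argument with a third involution against Lemma~\ref{l:uniqueness}.
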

\begin{proof}
Suppose not. Then $C_i := C^\circ(i)$ has rank $3$, and it must be that $C_i \simeq \mathrm{SL}_2(\mathbb{K})$ in characteristic not $2$. By torality \cite[Theorem 3]{BuCh08}, the Sylow $2$-subgroup of $G$ is like that of $\mathrm{SL}_2(\mathbb{K})$ and involutions are conjugate.

Let $j \in I(G)$ be a generic conjugate of $i$ and $A = (C_i\cap C_j)^\circ$. Since $i$ is the only involution in $C_i$, the group $A$ contains no involutions: it is therefore an algebraic unipotent subgroup of $C_i$, contained in the Borel subgroup $B_i = N_{C_i}(A)$ of $C_i$ (resp. $B_j = N_{C_j}(A)$ in $C_j$).

Since $i \neq j$, one has $B_i \neq B_j$; hence $N := N^\circ(A)$ is a Borel subgroup of $G$ of rank $3$. If the Fitting subgroup $F := F^\circ(N)$ contains some involution $k$ then since we are in odd type and by the rigidity of tori, $k$ is central in $N$, so using the structure of the Sylow $2$-subgroup, $i = k = j$ which is a contradiction. Moreover, $F$ has rank $2$ as we know, and $A = B_i' \leq N' \leq F^\circ(N)$.

Now let $k$ be an involution generic over $i$ and $j$. If $A_k := (C_k \cap N)^\circ \neq 1$ contains an involution, it can only be the involution $k$, which then normalises $A$: against genericity. Hence again $A_k$ is a unipotent subgroup of $C_k$. Lifting torsion, $A_k F$ has no involutions, so it is proper in $N$. Therefore $A_k \leq F$.

By conjugacy of involutions in $G$ and of unipotent subgroups in $C_i$, the groups $A$ and $A_k$ are however conjugate, so we know the structure of $N_k = N^\circ(A_k)$: a Borel subgroup with $A_k \leq F^\circ(N_k)$. Hence $A_k \leq F\cap F^\circ(N_k)$, and Lemma \ref{l:uniqueness} forces $N = N_k$, so $k$ normalises $N$, against genericity again.
\end{proof}

\begin{proof}[Proof of Proposition~\ref{p:GisCibo}]
Fact~\ref{f:DJ} now applies to $G$, and as rank considerations rule out the possibility that $G$ is of the form $\psl_2$, Proposition~\ref{p:GisCibo} is proven. Incidentally, rank considerations also immediately rule out \cibo$_2$. We shall return to this in \S\ref{s:CiBo2}.
\end{proof}

\section{Using the \texorpdfstring{$N_\circ^\circ$}{Noo}-analysis} 

%

By Proposition~\ref{p:GisCibo}, the simple rank $5$ group $G$ has no involutions or is of type \cibo$_k$ for $k \in \{ 1, 2, 3\}$. Different cases beg for distinct methods.

\setcounter{subsection}{-1}
\subsection{The bad case}

Let us quickly address the case without involutions.

\begin{proposition}\label{p.bad}
If $G$ has no involutions, then $G$ is a bad group all of whose proper definable connected subgroups are nilpotent of rank at most $2$.
\end{proposition}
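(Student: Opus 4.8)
The plan is to exploit the structure already established: $G$ is a simple $N_\circ^\circ$-group of rank $5$ with no involutions, and we have the uniqueness principles from \S\ref{s:uniqueness} at our disposal. Since $G$ is nonsoluble (being simple and infinite), it suffices to show that every proper definable connected subgroup is nilpotent of rank at most $2$; this will simultaneously establish that $G$ is bad, because any proper definable soluble subgroup will then be nilpotent, hence certainly nilpotent-by-finite. The first thing I would record is that, by Fact~\ref{fact.Hru}, $G$ has no definable subgroups of rank $4$, a fact already noted in the excerpt. Combined with the simplicity of $G$, this means the only candidate proper connected subgroups have rank $1$, $2$, or $3$.

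The heart of the argument is to rule out rank $3$ proper connected subgroups and to show that the rank $1$ and rank $2$ ones are nilpotent. For the rank $3$ case, I would argue that any proper definable connected subgroup $H$ of rank $3$ is contained in a Borel subgroup, and since $G$ has no rank $4$ subgroups, $H$ would itself be a Borel subgroup of rank $3$. By Corollary~\ref{c:uniqueness}, $\rk F^\circ(H) = 2$, and $H$ would be nonnilpotent with $H/F^\circ(H)$ divisible abelian of rank $1$. The key point is to derive a contradiction from the absence of involutions: in a nonnilpotent soluble connected group with $F^\circ(H)$ of rank $2$ and $H/F^\circ(H)$ of rank $1$, the quotient $H/F^\circ(H)$ acts on $F^\circ(H)$, and by Zilber's Field Theorem one expects a definable field to appear, forcing a torus and thereby torsion into $H$. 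Since $G$ has no involutions it is of degenerate type, so one must be careful: the torsion produced could be odd. I would therefore look instead to count conjugates of $H$ or $F^\circ(H)$ and use a genericity/Borel-intersection argument in the style of Lemma~\ref{l:C(i)}, playing two generic conjugates $H$ and $H^g$ off against Lemma~\ref{l:uniqueness} to show that no such rank $3$ Borel can exist (or, if it does, that it forces $G$ to be of type \cibo, contradicting the absence of involutions via Fact~\ref{f:DJ}).

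For the rank $2$ subgroups, Fact~\ref{f.rankTwoGroups} gives solubility for free; I must upgrade this to nilpotence. A rank $2$ connected $H$ that is nonnilpotent has $H/Z(H) \cong K_+ \rtimes K^\times$, which contains torsion and, in characteristic not $2$, an involution. Since $G$ has no involutions, I would need to rule out the characteristic-$2$ possibility as well, presumably by again embedding $H$ in a Borel and invoking the uniqueness principles, or by noting that such an $H$ normalises its own rank $1$ unipotent Fitting subgroup and generates too large a normaliser. Rank $1$ connected subgroups are nilpotent automatically by Fact~\ref{f.rankOneGroups}. Assembling these, every proper definable connected subgroup is nilpotent of rank at most $2$, which is exactly the claim.

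The main obstacle I anticipate is the rank $3$ case, and specifically handling the torsion produced by Zilber's Field Theorem in a group that is deliberately devoid of involutions. The natural field that emerges has a multiplicative group with divisible torsion, and in odd type this torsion would be a decent torus containing elements of order $2^n$; the whole point of the bad/degenerate configuration is that such tori need not contain honest involutions, so I cannot simply say ``there is an involution, contradiction.'' Instead I expect the decisive move to be a conjugacy-plus-genericity argument feeding into Lemma~\ref{l:uniqueness}: two distinct rank $3$ Borels sharing a rank $2$ Fitting intersection is forbidden, so generic conjugates of a putative rank $3$ subgroup must intersect in rank $1$, and a rank count on the union of conjugates should overflow the rank $5$ ambient group, yielding the contradiction. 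Getting that counting argument tight, rather than the field-theoretic shortcut, is where the real care will be needed.
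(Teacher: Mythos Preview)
Your overall plan (rule out rank $3$ subgroups, then show rank $\leq 2$ subgroups are nilpotent) matches the paper's, but the execution of the rank $3$ case has a genuine gap in two related places.

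First, you write that any rank $3$ subgroup $H$ is contained in a Borel and hence is itself a Borel. Since Borel subgroups are soluble by definition, this silently assumes $H$ is soluble; a nonsoluble rank $3$ subgroup lies in no Borel and must be handled separately. The paper treats this case explicitly, observing that a nonsoluble rank $3$ group without involutions is a bad group (or simply invoking Fr\'econ), so that distinct conjugates still meet in rank~$1$.

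Second, and more seriously, your proposed endgame---a ``union of conjugates overflows rank $5$'' argument---does not produce a contradiction. With $\rk H = 3$ and $\rk(G/N(H)) = 2$ one only gets $\rk\bigcup_g H^g \leq 5$, and there is no a priori reason the union should not be generic in $G$. The paper's decisive mechanism is different: from $\rk(H\cap H^g)=1$ for $g\notin N(H)$, the $H$-orbit of $H^g$ on the rank $2$, degree $1$ set $X=\{H^g:g\in G\}$ has rank $2$, so $G$ acts \emph{generically $2$-transitively} on $X$. An element swapping a generic pair of points then exists, and torsion lifting manufactures an involution in $G$---the contradiction. This single mechanism disposes of the soluble and nonsoluble rank $3$ cases uniformly, which is why the paper's soluble case reduces to one line: Corollary~\ref{c:BorelIntersections} (in the absence of involutions) forces $\rk(H\cap H^g)=1$, and then the argument just given applies verbatim. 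Your sketch never identifies this $2$-transitivity/torsion-lifting step, and without it the rank $3$ case does not close.

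A minor aside: your worry about characteristic $2$ in the rank $2$ analysis is unnecessary. When $\charac K = 2$ the additive group $K_+$ has exponent $2$, so $K_+\rtimes K^\times$ contains involutions in every characteristic, and torsion lifting to $H$ finishes immediately.
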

\begin{proof}
Suppose $G$ has a subgroup $H$ of rank $3$ and divide into cases. 

First, assume that $H$ is non-soluble. Then having no involutions, $H$ is a bad group of rank $3$: hence for $g \notin N(H)$, the intersection $H \cap H^g$ has rank $1$. So, the orbit of $H^g$ under conjugation by $H$ has rank $2$, and as $X := \{H^g: g \in G\}$ has rank $2$ and degree $1$, we find that the action of $G$ on $X$ is generically $2$-transitive, i.e. that there is a unique generic orbit on $X\times X$. Lifting torsion, this creates an involution in $G$, a contradiction. (Of course there is Fr\'econ's Theorem as well.)

Now suppose that $H$ is soluble. To avoid the same contradiction as above, for generic $g$, the intersection $H \cap H^g$ has rank $2$, but this contradicts Corollary~\ref{c:BorelIntersections}.

So $G$ has no definable, connected, proper subgroup of rank $3$. Having no involutions, its rank $2$ subgroups are nilpotent; $G$ is a bad group.
\end{proof}

\subsection{Killing \texorpdfstring{\cibo$_1$}{CiBo1}: Moufang sets}

We target the following proposition. 

\begin{proposition}\label{p:cibo1}
The group $G$ cannot have type \cibo$_1$.
\end{proposition}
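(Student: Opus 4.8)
Throughout assume, for contradiction, that $G$ has type \cibo$_1$, so that $B_i := C(i)$ is a self-normalising (hence connected) Borel subgroup for each involution $i$, and recall from $m_2(G)=1$ that distinct involutions never commute; in particular $i$ is the unique involution of $B_i$. The plan is to turn the conjugacy class $X := i^G \cong G/B_i$ into a Moufang set with little projective group $G$, and then to appeal to the classification of Moufang sets of finite Morley rank with abelian root groups: such a Moufang set is the projective line of an algebraically closed field $K$, forcing $G \cong \psl_2(K)$. Since $\psl_2(K)$ has rank $3$, this contradicts $\rk G = 5$ (and of course contradicts being of type \cibo$_1$).

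The first step is to compute ranks. As $G$ has no subgroup of rank $4$ (Fact~\ref{fact.Hru}), every Borel has rank at most $3$; I would rule out $\rk B_i \le 2$ by a genericity argument on involutions, using that $i$ fixes the single point $[i]\in X$ --- indeed $g^{-1} i g \in B_i$ forces $g^{-1} i g = i$, hence $g \in B_i$ --- so that $i$ inverts every product $ij$ with $j \in X$ and the fibres of $(j,k)\mapsto jk$ are controlled by centralisers of such products. This leaves $\rk B_i = 3$, whence $\rk F^\circ(B_i) = 2$ by Corollary~\ref{c:uniqueness} and $\rk X = 2$. The candidate root group is then $U_i := F^\circ(B_i)$, and it remains to show it is abelian; here I would use that we are in odd type with $\charac K \neq 2$ and $i \in Z(B_i)$, reducing the rank-$2$ nilpotent group $U_i$ to the abelian case via Fact~\ref{f.rankTwoGroups}.

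The core of the argument is to verify the Moufang axioms for the conjugacy class of root groups $\{U_{[x]}\}_{[x]\in X}$, where $U_{[x]} = F^\circ(C(x))$; the Moufang condition $U_{[x]}^g = U_{[x^g]}$ is immediate from the conjugacy $F^\circ(B_x)^g = F^\circ(B_{x^g})$, so the real content is that $U_i$ acts \emph{sharply} transitively on $X \setminus \{[i]\}$. Transitivity follows from connectedness once the ranks $\rk U_i = 2 = \rk(X \setminus \{[i]\})$ are matched, so the crux is freeness. For a second point $[j]$ the stabiliser is $U_i \cap B_j$, which lies in $B_i \cap B_j$; since $B_i = C^\circ(i)$ and $B_j = C^\circ(j)$ with $i,j$ non-commuting, the exceptional case of Corollary~\ref{c:BorelIntersections} is vacuous and $\rk(B_i \cap B_j) = 1$. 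I would then show that the rank-$1$ two-point stabiliser $H := (B_i \cap B_j)^\circ$ is a torus complementing $U_i$ in $B_i$ and meeting $U_i$ trivially --- so that $B_i = U_i \rtimes H$ with Hua subgroup $H$ --- which yields the free action. This is exactly the step that uses the full strength of Lemma~\ref{l:uniqueness}: were $H \le F^\circ(B_i)$, symmetry would have to be played off against the central torus carrying $i$, and Lemma~\ref{l:uniqueness} invoked to force the contradiction.

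With $X$ realised as an infinite Moufang set of finite Morley rank with abelian root groups $U_i$, the classification of such Moufang sets produces a definable algebraically closed field $K$ with $U_i \cong K_+$ and $G \cong \psl_2(K)$; as $K$ has rank $1$ this forces $\rk U_i = 1$, contradicting $\rk U_i = 2$, and \cibo$_1$ is killed. I expect the main obstacle to be the sharp transitivity of the root groups: the uniqueness principles only deliver rank-$1$ intersections of Borels, and upgrading this to a genuinely free action --- equivalently, showing the two-point stabiliser is a torus disjoint from the Fitting subgroup rather than a rank-$1$ subgroup sitting inside it --- together with verifying the speciality hypothesis needed to invoke the Moufang-set classification, is where the delicate odd-type and torus-rigidity arguments must do their work.
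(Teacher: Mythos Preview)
Your Moufang-set strategy for $\rk B_i = 3$ is close in spirit to the paper's Lemma~\ref{l:Moufang}, with a twist: you take $X = i^G$ and aim for \emph{abelian root groups}, whereas the paper takes $X$ to be the conjugacy class of an \emph{arbitrary} rank~$3$ Borel and applies Fact~\ref{f.Moufang} via abelian \emph{Hua subgroups} and root groups \emph{without involutions}. Your abelian-root-group claim is actually defensible here (the $2$-torus through $i$ sits in $Z^\circ(B_i) \le F^\circ(B_i)$, so $U_i$ has unbounded exponent and Fact~\ref{f.rankTwoGroups} forces it abelian), but this same observation puts $i \in U_i$ and so blocks Fact~\ref{f.Moufang}; you would need a genuinely different classification result, as you acknowledge. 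Your transitivity argument is also incomplete: rank matching plus connectedness yields only a generic orbit, and the paper must separately rule out rank~$0$ and rank~$1$ orbits of $U_\beta$.

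The real gap, however, is your treatment of $\rk B_i \le 2$. A bare Brauer--Fowler count only gives $\rk B_i \ge 2$ (it does rule out rank~$1$), not $\rk B_i = 3$. The paper's argument for the surviving case $\rk B_i = 2$ is substantial and, crucially, \emph{uses the Moufang lemma as input}, in the stronger form ``no rank~$3$ Borel exists at all'' rather than merely $\rk B_i \ne 3$. That generality is what bounds $\rk C^\circ(x) \le 2$ for a strongly real $x = ij$; only then does Brauer--Fowler pin down $\rk B_i = \rk C^\circ(x) = 2$, after which one shows $C := C^\circ(x)$ is abelian, inverted by $i$, almost self-normalising, and disjoint from its proper conjugates, so that $\bigcup_G C^g$ is generic in $G$ --- yet contains no generic element, since generic elements carry a $2$-torus. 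By reversing the order and attempting to dispose of $\rk B_i \le 2$ first, you lose the bound on centralisers of strongly real elements and the argument does not close. The paper's extra generality (arbitrary rank~$3$ Borels, not just $B_i$) is therefore essential to the overall architecture, not cosmetic.
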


\begin{setup}
Assume $G$ has type \cibo$_1$.
\end{setup}

The two main ingredients of our analysis in this case are (a small fragment of) the theory of Moufang sets and the Brauer-Fowler computation. The latter will be used later; for the moment we focus on the former.
Moufang sets encode the essence of so-called split $BN$-pairs of (Tits) rank $1$, and as such, they sit at the low end of an important geometric framework. (So low, in fact, there is no honest geometry to speak of, at least not in the sense of Tits.)

\begin{definition}
For a set $X$ with $|X| \ge 3$ and a collection of groups $\{U_x : x\in X\}$ with each $U_x \le \mbox{Sym}(X)$, we say that $(X,\{U_x : x\in X\})$ is a \textdef{Moufang set} if for $P := \langle U_x : x\in X \rangle$ the following conditions hold:
\begin{enumerate}
\item each $U_x$ fixes $x$ and acts regularly on $X\setminus \{x\}$,
\item $\{U_x : x\in X\}$ is a conjugacy class of subgroups in $P$.
\end{enumerate}
We call $P$ the \textdef{little projective group} of the Moufang set, and each $U_x$ is called a \textdef{root group}. The $2$-point stabilisers in $P$ are called the \textdef{Hua subgroups}.
\end{definition}

The result we require is the following, though likely we could do with less if we were willing to work a little harder.

\begin{fact}[see \protect{\cite[Theorem~A]{WiJ14}}]\label{f.Moufang}
Let $(X,\{U_x : x\in X\})$ be a Moufang set of finite Morley rank with abelian Hua subgroups and infinite root groups that contain no involutions. If the little projective group $P$ of the Moufang set has odd type, then $P\cong \psl_2(K)$ for some algebraically closed field $K$.
\end{fact}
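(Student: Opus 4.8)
The plan is to reduce the statement to Hrushovski's theorem (Fact~\ref{fact.Hru}). Fix two points $0,\infty\in X$, set $U:=U_\infty$ and let $H:=P_{0,\infty}$ be the Hua subgroup, so that $U$ acts regularly on $X\setminus\{\infty\}$ and $P_\infty = U\rtimes H$. Because the root groups form a single conjugacy class and $U$ is transitive on $X\setminus\{\infty\}$, the little projective group $P$ acts faithfully (by definition $P\le\mathrm{Sym}(X)$) and $2$-transitively on $X$; moreover $\rk X = \rk U$ since the $U$-action is regular. The key reduction I aim for is that $X$ has rank and degree $1$, equivalently that $U$ is connected of Morley rank $1$. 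Granting this, Fact~\ref{fact.Hru} applies (note $\rk P \ge 2\,\rk U > 1$ because $U$ is infinite) and identifies $P$, up to equivalence of actions, with either $\agl_1(K)$ on $\mathbb{A}_1(K)$ or $\pgl_2(K)$ on $\mathbb{P}^1(K)$ for some algebraically closed field $K$.

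The first possibility is precisely what the involution hypotheses kill. If $P\cong\agl_1(K)$, then each point stabiliser is isomorphic to $K^\times$ and acts regularly on the complementary rank-$1$ set; since a root group $U_x$ lies in $\mathrm{Stab}(x)$ and also acts regularly there, one gets $U_x=\mathrm{Stab}(x)\cong K^\times$. But $P$ has odd type, forcing $\charac K\neq 2$, so $K^\times$ contains the involution $-1$, contradicting that root groups are involution-free. Hence $P\cong\pgl_2(K)$, and as $K$ is algebraically closed the natural map $\ssl_2(K)\to\pgl_2(K)$ is onto, giving $\pgl_2(K)\cong\psl_2(K)$, which is the claim. (Note that the remaining hypotheses are consistent here: the root group $K_+$ of $\pgl_2(K)$ is involution-free in odd characteristic and the Hua subgroup $K^\times$ is abelian.)

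The heart of the matter, and the step I expect to be hardest, is establishing that $U$ is connected of Morley rank $1$. Here I would exploit both the odd type and the abelian Hua subgroup. Since $P$ has odd type it contains an involution, and in the Moufang setting the relevant one is (a conjugate of) a $\mu$-map interchanging $0$ and $\infty$: conjugation by such a $\mu$ swaps $U_0$ and $U_\infty$ and normalises $H$. As root groups contain no involutions, no involution of $P$ can lie in the ``unipotent'' factor $U$ of a point stabiliser; this should confine the $2$-torus to $H$ and let me read off how $\mu$ acts on $U$ and on $H$. I would then study the conjugation action of the abelian group $H$ on $U$ and feed a minimal infinite $H$-invariant section of $U$ into Zilber's Field Theorem, producing a definable algebraically closed field $K$ over which that section is $1$-dimensional with $H$ embedding into $K^\times$. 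The main obstacle is to upgrade this to $\rk U=1$: a priori $U$ could carry $H$-invariant subgroups of positive rank, i.e.\ ``extra padding'' beyond the rank-$1$ field section. The plan to remove it is to use the regular action of $U$ on the homogeneous set $X\setminus\{\infty\}$ together with the symmetry furnished by $\mu$ (which inverts or permutes the $H$-sections of $U$ and $U_0$), forcing any proper $H$-invariant definable subgroup of $U$ to be trivial and thereby collapsing $U$ to the single rank-$1$ section. Establishing this rigidity---that the Hua action leaves no room for a higher-rank root group---is the crux on which the whole argument turns.
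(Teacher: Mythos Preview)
The paper does not prove this statement at all: it is quoted as a fact from \cite{WiJ14} and used as a black box in the proof of Lemma~\ref{l:Moufang}. So there is no ``paper's own proof'' to compare against; what you have written is an attempt to reprove an external theorem.

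That said, your strategy has a genuine flaw, not merely an incomplete step. You propose to show that the root group $U$ has Morley rank~$1$ and then invoke Hrushovski's theorem (Fact~\ref{fact.Hru}). But the conclusion you are aiming for is $P\cong\psl_2(K)$ for \emph{some} algebraically closed field $K$, and such a field need not have Morley rank~$1$: in $\psl_2(K)$ the root group is isomorphic to $K_+$, of rank $\rk K$, and the Moufang set $\mathbb{P}^1(K)$ likewise has rank $\rk K$. So $\rk U=1$ is not a consequence of the hypotheses, and your reduction to Hrushovski cannot succeed in general. Indeed, in the very application the paper makes (Lemma~\ref{l:Moufang}), the set $X$ has rank~$2$ and the root groups $U_\beta=F^\circ(\beta)$ have rank~$2$, so even in that concrete instance your rank-$1$ reduction would fail.

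The actual proof in \cite{WiJ14} proceeds through the internal algebra of Moufang sets (Hua maps, the ``special'' property, and the De~Medts--Weiss identification of special abelian Moufang sets), not by collapsing to a rank-$1$ homogeneous space. Your instinct to bring in Zilber's Field Theorem on an $H$-minimal section of $U$ is reasonable and does appear in that literature, but it yields a field structure on a \emph{section} of $U$, not a bound on $\rk U$; the remaining work is to identify $U$ with $K_+$ globally, which is a Moufang-theoretic argument rather than a rank argument.
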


\begin{lemma}\label{l:Moufang}
There are no rank $3$ Borel subgroups of $G$.
\end{lemma}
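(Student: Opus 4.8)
The plan is to argue by contradiction. Assuming $B$ is a rank $3$ Borel subgroup, I would manufacture from it a Moufang set of finite Morley rank whose root groups are definable connected subgroups of $G$ forming a single $G$-conjugacy class; the little projective group $P = \langle U_x : x \in X\rangle$ is then generated by a conjugacy class of subgroups, hence normal in $G$, hence $P = G$ by simplicity. Verifying the hypotheses of Fact~\ref{f.Moufang} would then give $G \cong \psl_2(K)$, which is absurd since $\rk \psl_2(K) = 3 \neq 5$ (and is in any case already excluded by Proposition~\ref{p:GisCibo}). To set up the local structure, Corollary~\ref{c:uniqueness} gives that $F := F^\circ(B)$ has rank $2$ and that $B$ is non-nilpotent; using odd type and Fact~\ref{f.rankTwoGroups}, $B$ should then present as a rank $2$ ``unipotent radical'' extended by a rank $1$ ``torus'' $T$ carrying the unique involution $i$, and I would want to identify $B$ with $C(i)$, which is legitimate in \cibo$_1$ where centralisers of involutions are self-normalising Borels.

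Next I would build the Moufang set on a suitable transitive $G$-set $X$ --- the conjugacy class of rank $3$ Borels, equivalently of their involutions --- with $B = C(i)$ as the stabiliser of a point $x_0$, so that $\rk X = 5 - 3 = 2$. The Hua subgroup (the two-point stabiliser) should be the toral direction $T = (B \cap B_y)^\circ$ shared by two points, and the root group $U_{x_0}$ should be the rank $2$ complement to $T$ inside $B$, in the role of the unipotent radical. The two axioms to verify are that $\{U_x\}$ is a single conjugacy class and that each $U_x$ fixes $x$ and acts regularly on $X \setminus \{x\}$. The conjugacy is immediate, since the data attached to each point is canonical and the point stabilisers are $G$-conjugate. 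For regularity, the essential input is that distinct rank $3$ Borels meet in rank exactly $1$: by Corollary~\ref{c:BorelIntersections} the case of commuting involutions is excluded by $m_2(G) = 1$, so $\rk (B_{x_0} \cap B_y)^\circ = 1$ for $y \neq x_0$, and Lemma~\ref{l:uniqueness} together with a rank count should force this intersection to lie in the toral direction rather than in $U_{x_0}$. Hence the stabiliser $U_{x_0} \cap B_y$ is finite and, $X \setminus \{x_0\}$ having the same rank as $U_{x_0}$, the action is simply transitive.

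Finally, to apply Fact~\ref{f.Moufang} I would confirm that $P = G$ has odd type (clear, since \cibo$_1$ has involutions), that the Hua subgroups are abelian (being connected of rank $1$, by Fact~\ref{f.rankOneGroups}), and that the root groups are infinite and involution-free: the unique involution sits in the toral Hua direction $T$, so $\pr_2(G) = m_2(G) = 1$ keeps involutions out of $U_{x_0}$. I expect the genuine difficulty to be precisely the regularity axiom together with this involution-freeness, that is, cleanly separating the ``unipotent'' root direction from the ``toral'' Hua direction inside the rank $3$ Borel and certifying that $i \notin U_{x_0}$. This is where the uniqueness principles (Lemma~\ref{l:uniqueness} and Corollaries~\ref{c:uniqueness} and~\ref{c:BorelIntersections}) and the tight $2$-structure of \cibo$_1$ must be pushed hardest; choosing the correct orbit $X$ and the correct complement $U_{x_0}$, and upgrading a merely generic simple transitivity into an honest Moufang set, are the load-bearing steps, after which Fact~\ref{f.Moufang} and the rank count close the argument.
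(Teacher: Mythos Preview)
Your overall plan is the paper's plan: take $X$ to be the $G$-conjugacy class of the rank $3$ Borel $B$, produce a Moufang set, and invoke Fact~\ref{f.Moufang}. You have also correctly located the two pressure points (regularity of the root action, and involution-freeness of the root groups). But two of your intended shortcuts are genuine gaps.

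First, the paper does \emph{not} attempt to identify $B$ with some $C(i)$, nor to split $B$ as $U\rtimes T$ with the involution sitting in $T$. It simply sets $U_\beta := F^\circ(\beta)$, which is canonical and has rank $2$ by Corollary~\ref{c:uniqueness}. Your proposed identification $B = C(i)$ is neither needed nor justified: nothing tells you that an arbitrary rank $3$ Borel is the centraliser of an involution, and your argument that ``the unique involution sits in the toral Hua direction $T$'' then becomes circular. The paper instead proves involution-freeness of $U_\beta$ directly: if $U_\beta$ contained an involution, it would contain a maximal $2$-torus, which by rigidity would be central in $\beta$; then $\beta$ modulo this torus would be non-nilpotent of rank $2$ with no involutions, contradicting Fact~\ref{f.rankTwoGroups}.

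Second, your regularity argument conflates ``finite stabiliser'' with ``free action''. Establishing that $U_\beta \cap N(\gamma)$ is finite (and even this needs more than you say: Lemma~\ref{l:uniqueness} only controls $(F^\circ(\beta)\cap F^\circ(\gamma))^\circ$, not $(F^\circ(\beta)\cap \gamma)^\circ$; the paper gets there by observing that otherwise the two-point stabiliser would contain two distinct rank $1$ groups and hence have rank $\geq 2$) gives transitivity on $X\setminus\{\beta\}$ by a rank-and-degree argument, but not freeness. The paper treats freeness separately: if $U_\beta$ is abelian, any $u\in U_\beta$ fixing $\gamma$ lies in the kernel by transitivity; if $U_\beta$ is non-abelian it is $p$-unipotent, and one argues via centralisers in $Z^\circ(U_\gamma)$ and Lemma~\ref{l:uniqueness} that $C^\circ(u)$ would be a nonsoluble rank $3$ group, which is impossible inside \cibo$_1$. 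This step is not a formality and is missing from your sketch.
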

\begin{proof}
Assume that $B$ is a rank $3$ Borel subgroup of $G$,  
and let $X$ be the set of $G$-conjugates of $B$. We now consider the permutation group $(X,G)$, and show that it is associated to a Moufang set. For $\beta \in X$, let $U_\beta := F^\circ(\beta)$. Of course the stabiliser of $\beta$ is $N(\beta)$ and $N^\circ(\beta) = \beta$. As a consequence of Corollary~\ref{c:BorelIntersections} in type \cibo$_1$ (and the fact that $X$ has degree $1$), we find that $G$ acts $2$-transitively on $X$ with each $2$-point stabiliser of rank $1$. We now claim that $U_\beta$ acts regularly on $X\setminus \{\beta\}$. 

First, we show transitivity. As $X$ has rank $2$ and degree $1$, it suffices to show that $U_\beta$ has no orbits of rank $0$ or $1$ other than $\{\beta\}$. By connectedness of $U_\beta$, a rank $0$ orbit is a fixed point of $U_\beta$, but $U_\beta$ is too large to be contained in a $2$-point stabiliser. Next, a rank $1$ orbit gives rise to a rank $1$ subgroup $A_\beta := U_\beta \cap N(\gamma)$ for some $\gamma\neq \beta$. By $2$-transitivity,  $A_{\gamma} := U_{\gamma} \cap N(\beta)$ also has rank $1$, and by Lemma~\ref{l:uniqueness}, $A_\beta^\circ \neq A_{\gamma}^\circ$. Thus, the $2$-point stabiliser $G_{\beta,\gamma}$ has rank $2$, which is a contradiction. So $U_\beta$ acts transitively on $X\setminus\{\beta\}$.


We now show that the action of $U_\beta$ on $X\setminus\{\beta\}$ is free. Suppose $u \in U_\beta$ fixes $\gamma$ for $\gamma\neq \beta$, so $u \in U_\beta\cap N(\gamma)$. First, if $U_\beta$ is abelian, then the transitivity of $U_\beta$ on $X\setminus\{\beta\}$ forces $u$ to be in the kernel of the action of $G$ on $X$, and simplicity of $G$ implies that $u=1$. Thus, we may assume that $U_\beta$ is a nilpotent nonabelian group of rank $2$. Thus, $U_\beta$ is $p$-unipotent (see \S\ref{s:small} for the definition), and $u$ is a $p$-element. Of course, $U_{\gamma}$ is also $p$-unipotent, so by \cite[\S6.4]{BoNe94}, $u$ has an infinite centraliser in $Z^\circ(U_{\gamma})$, which has rank exactly $1$. Hence $u$ centralises $Z^\circ(U_{\gamma})$. We now have that $C^\circ(u) \geq \langle Z^\circ(U_\beta), Z^\circ(U_{\gamma})\rangle$. If $C^\circ(u)$ is soluble, then every $p$-unipotent subgroup of $C^\circ(u)$ would lie in $F^\circ(C^\circ(u))$, and this would contradict Lemma \ref{l:uniqueness}. So, $C^\circ(u)$ is a non-soluble group of rank $3$. By the work of Fr\'econ (or the fact that since $\beta\cap\gamma$ normalizes  $\langle Z^\circ(U_\beta), Z^\circ(U_{\gamma})\rangle = C^\circ(u)$, $Z^\circ(U_\beta)\cdot(\beta\cap\gamma)^\circ$ is a rank $2$ subgroup of $C^\circ(u)$), we find that $C^\circ(u) \cong \pssl_2(K)$, and as this cannot happen inside of a \cibo$_1$ group (by looking at the structure of the Sylow $2$-subgroup), we conclude that the action of $U_\beta$ on $X\setminus\{\beta\}$ is free.

This shows that $(X,\{U_\beta : \beta\in X\})$ is a Moufang set, and by connectedness results for Moufang sets (see \cite[Proposition~2.3]{WiJ11}),
each Hua subgroup is a connected rank $1$, hence abelian, group. Moreover, we claim that $U_\beta$ contains no involutions. Indeed, if $U_\beta$ contained an involution, it would contain a maximal $2$-torus $T$, the definable hull $d(T)$ of which would be central in $\beta$ by rigidity. But then, $\beta/d(T)$ would be a non-nilpotent group of rank $2$ without involutions. This is a contradiction, so the root groups of the Moufang set have no involutions. By Fact~\ref{f.Moufang}, we have a contradiction.
\end{proof}

Notice that since we are in type \cibo$_1$, using the work of Fr\'econ one could even claim that $G$ has no definable subgroups of rank $3$ at all.

\begin{proof}[Proof of Proposition~\ref{p:cibo1}]
The proof combines Brauer-Fowler computations and genericity arguments, the latter being mostly \cite[Theorem~1]{BuCh08} which asserts that the definable hull of a generic element contains a maximal decent torus (in our case a non-trivial $2$-torus will suffice). Let $i, j \in G$ be independent involutions and set $x = ij$. This strongly real element is no involution.

First we argue that $B_i = C^\circ(i)$ has rank $2$; of course, by Lemma~\ref{l:Moufang}, the rank is at most $2$.
Consider $C = C^\circ(x)$, which is normalised by $i$; by the structure of the Sylow $2$-subgroup, $C$ contains no involutions. If $C$ were not soluble it would be of the form $\psl_2(K)$ or a bad group; the first case contradicts the structure of the Sylow $2$-subgroup and the second contradicts the solubility of $B_i$ as bad groups have no involutive automorphisms \cite[Proposition~13.4]{BoNe94} (besides not existing in rank $3$). 
Thus, $C$ is soluble, so by Lemma \ref{l:Moufang} again, $\rk C \leq 2$.

Consider the map $\mu: I(G) \times I(G) \to G$ mapping $(i, j)$ to $x$. Notice that the image is the set of strongly real elements. By the structure of the Sylow $2$-subgroup and the genericity result we quoted in the first paragraph, the generic element is not strongly real, so the image set of $\mu$ has rank at most $4$. On the other hand, the fibre over $x$ is in definable bijection with the set of involutions inverting $x$, i.e. of involutions in $C^{\pm}(x)\setminus C(x)$, where $C^{\pm}(x)$ is the group of elements centralising or inverting $x$. Clearly the fibre has rank $f \leq \rk C \leq 2$. The Brauer-Fowler estimate is simply that, by additivity,
\[2 \rk I(G) - f \leq \rk\left(\mu(I(G)\times I(G)\right);\]
or put otherwise, since the map $\mu$ has non-generic image, $5 < 2 \rk B_i + f$.

In particular one must have $\rk B_i = f = \rk C = 2$. Now there is a whole coset of $C$ consisting of involutions inverting $x$, so $C$ is abelian, inverted by $i$. From this we derive a highly non-generic property. Suppose $C$ (or of course any conjugate) contains a generic element of $G$. As the latter contains in its definable hull a maximal $2$-torus, the structure of the Sylow $2$-subgroup forces $i \in C$, which is impossible. So $\bigcup_G C^g$ contains no generic element and we derive the contradiction as follows.

By the $N_\circ^\circ$-property together with Lemma \ref{l:Moufang}, $C$ must be almost self-normalising. Moreover if there is a non-trivial $c \in C\cap C^g$ for some $g \notin N(C)$ then $C^\circ(c) \geq \langle C, C^g\rangle\>$. As this centraliser is generated by two groups of rank $2$ that are each without $2$-torsion, it can be no group of rank $3$: hence $C^\circ(c) = G$. But, $G$ is simple, so $C\cap C^g = \{1\}$. Thus, $C$ is disjoint from its proper conjugates, and a standard computation now gives
\[\rk \bigcup_{g \in G} C^g = \rk\left(G/N_G(C)\right) + \rk(C) = \rk G.\]

This shows that $\bigcup_G C^g$ contains a generic element: a contradiction.
\end{proof}

\subsection{Killing \texorpdfstring{\cibo$_2$}{CiBo2}: Rank arguments}\label{s:CiBo2}

\begin{proposition}\label{p:cibo2}
The group $G$ cannot have type \cibo$_2$.
\end{proposition}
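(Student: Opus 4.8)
The plan is to exploit the rigid rank constraint built into the very definition of type \cibo$_2$. Recall from Fact~\ref{f:DJ} that if $G$ has type \cibo$_2$, then for any involution $i$ one has the equality $\rk G = 3\cdot \rk C(i)$. Since $C^\circ(i)$ is an infinite (abelian Borel) subgroup and has finite index in $C(i)$, the quantity $\rk C(i) = \rk C^\circ(i)$ is a well-defined \emph{positive integer}. Substituting $\rk G = 5$, the defining equation becomes $5 = 3\cdot \rk C(i)$, which would force $\rk C(i) = 5/3$.

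This is absurd: ranks are natural numbers, and $5$ is not divisible by $3$. Hence no involution can satisfy the \cibo$_2$ centraliser condition in a rank $5$ group, and $G$ cannot be of type \cibo$_2$. There is no genuine obstacle to overcome here; unlike the \cibo$_1$ case, which required the Moufang-set machinery of Fact~\ref{f.Moufang} together with the Brauer--Fowler estimate, \cibo$_2$ falls purely to the divisibility obstruction already flagged in the proof of Proposition~\ref{p:GisCibo}. The only point worth recording explicitly is that $\rk C(i) \neq 0$, which is immediate since $C^\circ(i)$ is a Borel subgroup and therefore infinite; with that in hand the argument is a single arithmetic line.
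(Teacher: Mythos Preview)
Your argument is correct and is exactly the paper's own proof: invoke the identity $\rk G = 3\cdot\rk C(i)$ from Fact~\ref{f:DJ} and observe that $5$ is not a multiple of $3$. The paper compresses this into a single line, but the content is identical.
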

\begin{proof}
Otherwise, by Fact~\ref{f:DJ}, $5 = \rk G = 3 \rk C(i)$. 
\end{proof}

This is either a very satisfying or wholly disappointing proof, depending on your point of view; regardless, we further comment on this case below, in Section~\ref{sec.reflectionscibo2}, after introducing some geometric terminology in the next section.

\subsection{Killing \texorpdfstring{\cibo$_3$}{CiBo3}: Bachmann's theorem}

To conclude the proof of our main theorem, it remains to prove the following proposition.

\begin{proposition}\label{p:cibo3}
The group $G$ cannot have type \cibo$_3$.
\end{proposition}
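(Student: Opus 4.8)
The plan is to run a geometry-of-involutions argument and feed its output into Bachmann's theorem. First I would pin down the local structure forced by \cibo$_3$. By torality \cite[Theorem~3]{BuCh08} every involution $i$ lies in a maximal $2$-torus $T$, and since $\pr_2(G)=2$ we have $T\cong \mathbb{Z}_{2^\infty}\oplus\mathbb{Z}_{2^\infty}\le C(i)$; thus the Borel $B_i:=C(i)$ has Pr\"ufer $2$-rank $2$. Now $B_i$ cannot be abelian, for if $j$ is another involution of the four-group $V=\langle i,j\rangle$ then $j\in B_i$ would give $B_i\le C(j)=B_j$ and hence $B_i=B_j$, against the \cibo$_3$ hypothesis that distinct involutions have distinct centralisers. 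Since (by Fact~\ref{f.rankTwoGroups}) a nonabelian soluble group of rank at most $2$ has Pr\"ufer $2$-rank at most $1$, and since $G$ has no definable subgroup of rank $4$ (Fact~\ref{fact.Hru}), rank considerations force $\rk B_i=3$, whence $\rk F^\circ(B_i)=2$ by Corollary~\ref{c:uniqueness}. As $C(i)$ is self-normalising, $\rk i^G=\rk G-\rk C(i)=2$, so $\rk I(G)=2$. Fixing the four-group $V=\{1,i,j,k\}$ inside $\Omega_1(T)$, the three Borels $B_i,B_j,B_k$ are pairwise distinct, and Corollary~\ref{c:BorelIntersections} together with Lemma~\ref{l:uniqueness} governs their pairwise intersections and the position of $j,k$ inside $B_i$ (rigidity of tori preventing the non-central involutions from falling into $F^\circ(B_i)$).

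Next I would build the incidence geometry on $I(G)$. Declare the involutions of $G$ to be the \emph{points}, and call two distinct points \emph{orthogonal} when they commute; equivalently $i$ and $j$ are orthogonal exactly when $ij$ is again an involution, in which case $\{1,i,j,ij\}$ is a four-group. Because $m_2(G)=2$, a maximal pairwise-orthogonal family of points is a four-group carrying exactly three points, so the \emph{lines} of the geometry (the four-groups) each have three points and any two orthogonal points determine a unique line. The involutions act on this geometry by conjugation as a conjugacy-invariant class of \emph{reflections}. The heart of the proof is to verify that this is a Bachmann system, i.e.\ that the three-reflections configurations close up: whenever the relevant triples of involutions are suitably incident, their product is again an involution of the class. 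This is precisely where the \cibo$_3$ package is spent---the self-normalising Borel centralisers, the bijection $i\leftrightarrow C(i)$, and the uniqueness principles of \S\ref{s:uniqueness} together pin down the structure of $\langle i,j\rangle$ for both commuting and non-commuting pairs and supply the transitivity and freeness needed for the axioms.

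Once the system is recognised as a metric (Bachmann) plane, I would invoke the finite-Morley-rank form of Bachmann's theorem to identify the group generated by the reflection class---a definable section of $G$---with $\so_3(K)\cong\psl_2(K)$ (or $\pgl_2(K)$) for some algebraically closed field $K$ of characteristic $\neq 2$. The maximal $2$-torus $T$ of Pr\"ufer rank $2$ survives into this section as a group of rotations, yet every such group has Pr\"ufer $2$-rank $1$; tracking $T$ through the construction then yields $2=\pr_2(G)=1$, a contradiction. (Alternatively one reads off a direct rank or Sylow-structure contradiction, $G$ at rank $5$ having no room for the recognised configuration once one compares it with the rank-$3$ Borel $C(i)$.) Either way \cibo$_3$ is excluded, and combined with Propositions~\ref{p.bad}, \ref{p:cibo1} and \ref{p:cibo2} this completes the proof that the \cibo\ configurations do not occur.

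The hard part is plainly the middle step: checking that the commuting geometry on $I(G)$ genuinely satisfies Bachmann's axioms, above all the three-reflections theorem. The danger is that a product of three involutions that \emph{ought} to be a reflection fails to be an involution, or that two points admit no common perpendicular; ruling this out requires an exact description of $\langle i,j\rangle$ for non-orthogonal $i,j$ and tight control of the two-point stabilisers, and this is where I expect to lean hardest on Corollary~\ref{c:BorelIntersections}, Lemma~\ref{l:uniqueness}, and the rigidity of tori, with the rank bookkeeping $\rk I(G)=2$ and $\rk B_i=3$ keeping the configuration rigid enough to force the axioms.
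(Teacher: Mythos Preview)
Your high-level strategy---build an incidence geometry on $I(G)$ and feed it to Bachmann---is exactly the paper's, and your first paragraph on local structure is fine. The gap is in the geometry itself. Declaring the four-groups to be the lines gives three-point lines on which only \emph{commuting} pairs of involutions are ever collinear; two non-commuting involutions lie on no common line, so this is not a projective plane, and the version of Bachmann's theorem available here (three involutions collinear iff their product is an involution, on a genuine projective plane) simply does not apply. Nor is this the ``elliptic'' Bachmann axiom system: there too a line is an involution $\ell$, but its point-set is \emph{all} involutions commuting with $\ell$, not just the two completing a four-group. The paper's lines are accordingly the C-lines $i^\perp:=I(C(i))\setminus\{i\}$, which are rank~$2$ sets, and the substantive lemma is that for \emph{every} pair $i\neq j$ one has $|i^\perp\cap j^\perp|=1$; the non-commuting case is exactly where Lemma~\ref{l:uniqueness} is spent. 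Since $i\mapsto i^\perp$ is then a polarity, one has a projective plane.

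There is also a second step your sketch does not isolate: one must show C-collinearity coincides with B-collinearity (product of three involutions is an involution). This is where the fine structure of $B_i$ is used---$F^\circ(B_i)$ is abelian of rank~$2$, and for every $r\in i^\perp$ the subgroup $[B_i,r]$ is the same rank~$1$ subgroup of $F^\circ(B_i)$ (the unique one missing the $2$-torus), so all $r\in i^\perp$ invert the same set. Bachmann then gives $\langle I(G)\rangle\cong\so_3(K,f)$ with $f$ \emph{anisotropic}; the contradiction is not a Pr\"ufer-rank count but the elementary fact that no anisotropic form exists on $K^3$ once $K$ is algebraically closed (Macintyre). Your proposed endgame would also work once the group is identified, but the identification never gets off the ground with four-groups as lines.
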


\begin{setup}
Assume $G$ has type \cibo$_3$.
\end{setup}


We begin with some general observations.

\begin{remark}
Let $i\in I(G)$.
As distinct involutions give rise to distinct centralisers by Fact~\ref{f:DJ}, no maximal $2$-torus of $B_i$ is contained in $F^\circ(B_i)$. Also, $B_i = C(i)$; and since its Pr\"ufer $2$-rank is $2$, one has $\rk B_i = 3$. By Corollary~\ref{c:uniqueness}, $F^\circ(B_i)$ therefore has rank $2$. Returning to maximal $2$-tori, one sees that $B_i/F^\circ(B_i)$ and $F^\circ(B_i)$ both have Pr\"ufer $2$-rank $1$. Consequently, $Z^\circ(B_i)$ is a rank $1$ group containing divisible torsion, which in turn implies that $F^\circ(B_i)$ is abelian.
\end{remark}

We now prepare to expose some geometry. It all starts with, or is at least inspired by, the following theorem of Bachmann.

\begin{fact}[Bachmann's Theorem, see {\cite[Fact~8.15]{BoNe94}}]
If the set $I$ of involutions of a group $G$ possesses the the structure of a projective plane in such a way that three involutions are collinear if and only if their product is an involution, then $\langle I \rangle \cong \so_3(K,f)$ for some interpretable field $K$ and anisotropic form $f$ on $K^3$.
\end{fact}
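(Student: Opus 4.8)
The guiding model is $\so_3(K,f)$ acting on an anisotropic quadratic space $(K^3,f)$ over a field $K$ of characteristic $\neq 2$. There the involutions are exactly the half-turns $r_a$ about anisotropic lines $a$ (one-dimensional subspaces), so they biject with the points of $\mathbb{P}^2(K)$; two involutions $r_a,r_b$ have product an involution precisely when $a\perp b$, and a product $r_ar_br_c$ of three is an involution precisely when the axes $a,b,c$ are coplanar, i.e. when the corresponding points are collinear in $\mathbb{P}^2(K)$. The dictionary ``involutions $\leftrightarrow$ points, \emph{product is an involution} $\leftrightarrow$ collinear'' is exactly the hypothesis of the theorem, so the plan is to run it backwards: starting from the abstract projective plane $\Pi$ carried by $I$, reconstruct $K$ and $f$ and recover $\langle I\rangle$.

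First I would let $G=\langle I\rangle$ act on the point set $I$ of $\Pi$ by conjugation. Since conjugation preserves $I$ and the collinearity relation (which is purely group-theoretic), each $g\in G$ induces a collineation of $\Pi$, and each $i\in I$ induces an involutory one fixing the point $i$. Analysing this collineation should produce a point--line duality $\delta\colon i\mapsto i^\delta$, where $i^\delta:=\{\,j\in I : ij\in I\,\}$; note that for two involutions ``$ij$ is an involution'' is equivalent to ``$i$ and $j$ commute'', so $i^\delta$ is the set of involutions orthogonal to $i$. The key structural step is to verify that each $i^\delta$ is a line of $\Pi$, that $\delta$ is a \emph{polarity} (an involutory duality), and that it has no absolute points, i.e. $i\notin i^\delta$ for all $i$.

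Next I would coordinatise $\Pi$. Using the abundance of involutory homologies supplied by the elements of $I$---equivalently, the three-reflections theorem encoded in the collinearity hypothesis---one verifies that $\Pi$ is Desarguesian, and in fact Pappian, so by the fundamental theorem of projective geometry $\Pi\cong\mathbb{P}^2(K)$ for a commutative field $K$; here $K$ is interpretable because the whole reconstruction is carried out definably. The polarity $\delta$ is then a polarity of $\mathbb{P}^2(K)$, hence, by their classification, either orthogonal or unitary. Ruling out the unitary (Hermitian) case and checking $\charac K\neq 2$ identifies $\delta$ as the orthogonal polarity of a nondegenerate symmetric bilinear form $f$, and the absence of absolute points forces $f$ to be anisotropic. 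Finally, the reflection calculus identifies the conjugation action of each $i$ with the orthogonal reflection $r_i$, whence $\langle I\rangle\cong\so_3(K,f)$.

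The main obstacle is the coordinatisation together with the classification of $\delta$: extracting Pappus (and hence a commutative field) from the reflection structure, and then pinning down that the induced polarity is orthogonal rather than unitary and free of absolute points. This is precisely the content of Bachmann's reconstruction of a metric plane from its group of reflections, which is why we are content to invoke it wholesale rather than redo it here.
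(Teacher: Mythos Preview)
The paper does not prove this statement: it is recorded as a \emph{Fact} with a bare citation to \cite[Fact~8.15]{BoNe94}, and is then used as a black box in the proof of Proposition~\ref{p:cibo3}. There is therefore no proof in the paper to compare your proposal against.

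That said, your outline is a faithful sketch of the classical argument behind Bachmann's theorem: let $\langle I\rangle$ act on the plane by conjugation, extract the polarity $i\mapsto\{j:ij\in I\}$, use the three-reflections calculus to obtain Desargues and Pappus, coordinatise over a commutative field $K$, classify the polarity as orthogonal with no absolute points, and read off the anisotropic form $f$. You are right to flag the coordinatisation step (Pappus from the reflection structure) and the exclusion of the Hermitian case as the substantive work; this is precisely what is packaged in Bachmann's book and in the reference the paper cites. For the purposes of this paper your final sentence is the correct posture: the result is invoked wholesale, not reproved.
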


As such, we make the following definition.

\begin{definition}[B-lines]
We say that three involutions $i,j,k$ of $G$ are \textdef{Bachmann-collinear}, or just \textdef{B-collinear}, if $ijk$ is an involution. In this context, involutions will also be called points.
\end{definition}

\begin{remark}
In our setting, three involutions $i,j,k$ are $B$-collinear if and only if any one of the involutions inverts the product of the other two \textit{and} at least two of the involutions do not commute. (This is not true in higher $2$-rank.)
\end{remark}

Now, it turns out that there is a related notion of collinearity that is often easier to analyse, and presumably more relevant.

\begin{definition} [C-lines]
For $i\in I(G)$, we define the \textit{polar (line) of $i$} to be $i^\perp:=I(C(i)) - \{i\}$. Such a line will be called a \textdef{centraliser line} or \textdef{C-line}. (Notice that in the case of \cibo$_1$ one would get $i^\perp = \emptyset$.) 
\end{definition}

We now show that for \cibo$_3$ in rank $5$, $I(G)$ has the structure of a projective plane with respect to $C$-collinearity. 

\begin{lemma}\label{l:linesintersect}
For any two involutions $i \neq j$, the intersection $i^\perp \cap j^\perp$ has a unique involution. That is, two distinct C-lines intersect in a unique point.
\end{lemma}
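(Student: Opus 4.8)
The plan is to reformulate the statement in terms of the commuting graph on $I(G)$ and then split according to whether $i$ and $j$ commute. First I would record that $k \in i^\perp \cap j^\perp$ means exactly that $k$ is an involution distinct from $i$ and $j$ commuting with both $i$ and $j$; since $k \in C(i)$ if and only if $i \in C(k)$, this incidence is symmetric (it is a polarity), so the claim is equivalent to saying that any two distinct involutions admit a unique common neighbour in the commuting graph. I would also note at the outset, using the preceding remark, that $C(i) = C^\circ(i)$ and $C(j) = C^\circ(j)$ are rank $3$ Borel subgroups, and that by \cibo$_3$ they are distinct.

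When $i$ and $j$ commute, $\langle i, j\rangle$ is a four-group whose third involution $ij$ lies in $i^\perp \cap j^\perp$, giving existence. For uniqueness, any common neighbour $k$ centralises $\langle i, j\rangle$, so $\langle i, j, k\rangle$ is elementary abelian; as $m_2(G) = 2$ this forces $k \in \langle i, j\rangle$, hence $k = ij$. Thus $i^\perp \cap j^\perp = \{ij\}$ in the commuting case.

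When $i$ and $j$ do not commute, Corollary~\ref{c:BorelIntersections} applies to the distinct rank $3$ Borel subgroups $C(i)$ and $C(j)$: its second alternative would produce commuting involutions whose centralisers are $C(i)$ and $C(j)$, which by the distinctness of centralisers in \cibo$_3$ must be $i$ and $j$ themselves, contradicting noncommutativity. Hence $L := (C(i) \cap C(j))^\circ$ has rank $1$, and $i^\perp \cap j^\perp = I(C(i) \cap C(j))$. I claim the whole noncommuting case reduces to the single assertion $(\star)$: \emph{for noncommuting involutions $a, b$, the rank $1$ group $(C(a)\cap C(b))^\circ$ contains an involution.} Indeed, granting $(\star)$ for the pair $(i,j)$, the involution of $L$ lies in $C(i)\cap C(j)$ and differs from $i$ and $j$, so $i^\perp \cap j^\perp \neq \emptyset$. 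For uniqueness, suppose $k_1 \neq k_2$ both lie in $i^\perp \cap j^\perp$. If $k_1$ and $k_2$ commute, the already-settled commuting case gives $k_1^\perp \cap k_2^\perp = \{k_1 k_2\}$, while $i$ and $j$ both lie in $k_1^\perp \cap k_2^\perp$, forcing $i = j$. If $k_1$ and $k_2$ do not commute, apply $(\star)$ to $(k_1, k_2)$ to obtain an involution $k_0$ in $(C(k_1)\cap C(k_2))^\circ$; then $i$ and $j$ normalise this rank $1$ group and so centralise its (unique) involution $k_0$, which is readily seen to be distinct from $i,j,k_1,k_2$. Thus $\{i, k_1, k_0\}$ and $\{j, k_1, k_0\}$ are each sets of three pairwise commuting distinct involutions, so by $m_2(G) = 2$ one gets $k_0 = i k_1 = j k_1$ and again $i = j$, a contradiction.

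The main obstacle is therefore $(\star)$: showing that the rank $1$ intersection of the centralisers of two noncommuting involutions carries $2$-torsion. The difficulty is that $i$ and $j$ do \emph{not} normalise $L$, so one cannot simply extract a characteristic involution; one must instead pin down the isomorphism type of $L$. I would attack this with the torality and rigidity machinery, aiming to show $L$ is a (rank $1$) decent torus, hence contains an involution, rather than an odd unipotent or torsion-free group, using the abelian structure of $F^\circ(C(i))$ together with Lemma~\ref{l:uniqueness}. As a supplement, a genericity count should establish existence on a large set: fixing $i$, the incidences $\{(k, j) : k \in i^\perp,\ j \in k^\perp\}$ form a set of rank $2 = \rk I(G)$, so the generic $j$ has a (then unique) common neighbour with $i$; one then pushes this to all pairs using conjugacy together with the uniqueness bound already in hand.
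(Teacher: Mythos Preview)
Your commuting case is identical to the paper's. In the non-commuting case your overall plan matches the paper's structure: set $L=(C(i)\cap C(j))^\circ$, show $\rk L=1$, and reduce everything to the single assertion $(\star)$ that $L$ contains an involution. However, two points need correction.

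First, your statement that ``$i$ and $j$ do \emph{not} normalise $L$'' is false: $L\subseteq C(i,j)$, so $i$ and $j$ \emph{centralise} $L$. This does not by itself produce an involution in $L$, but it does mean the ``difficulty'' you describe is not the real one; the real issue is purely whether the abstract rank~$1$ group $L$ has $2$-torsion.

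Second, and more importantly, you do not actually prove $(\star)$; you only sketch a direction (``torality and rigidity \ldots\ using Lemma~\ref{l:uniqueness}'') and offer a genericity supplement. The paper's argument is exactly the one you are reaching for, and it is short: one runs a Pr\"ufer $2$-rank trichotomy on $L=H^\circ$ where $H=C(i,j)$. If $\pr_2(L)=2$ then $L$ contains a maximal $2$-torus, hence $i,j\in L$ by torality, forcing $[i,j]=1$. If $\pr_2(L)=0$, then $L\leq F^\circ(B_i)$: indeed $B_i/F^\circ(B_i)$ is a rank~$1$ group of Pr\"ufer $2$-rank~$1$, so any rank~$1$ connected subgroup of $B_i$ not contained in $F^\circ(B_i)$ surjects onto it with finite kernel and, by torsion lifting, inherits a $2$-torus. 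Symmetrically $L\leq F^\circ(B_j)$, contradicting Lemma~\ref{l:uniqueness}. Hence $\pr_2(L)=1$ and $L$ has a unique involution $k$. The abelianness of $F^\circ(B_i)$ is not used here; the genericity detour is unnecessary.

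Your uniqueness argument (granting $(\star)$) is correct but roundabout. The paper's is more direct: any $\ell\in I(H)$ normalises $H^\circ=L$ and therefore fixes its unique involution $k$; since $\ell$ commutes with both $i$ and $k$ one gets $\ell\in\{i,k,ik\}$ from $m_2(G)=2$, and symmetrically $\ell\in\{j,k,jk\}$, whence $\ell=k$ (noting $i,j\notin H$ in the non-commuting case). This avoids the case split on whether $k_1,k_2$ commute and the second invocation of $(\star)$.
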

\begin{proof}
First suppose that $i$ and $j$ commute. Then by the structure of the Sylow $2$-subgroup, $I(C(i, j)) = \{i, j, ij\}$, proving $i^\perp \cap j^\perp = \{ij\}$.

Now suppose $i$ and $j$ do not commute. Let $H:= C(i,j)$. In view of the structure of $B_i$, one easily sees that $H^\circ$ has rank $1$ (see also Corollary~\ref{c:BorelIntersections}). 
If $\Pr_2(H) = 0$, then $1 < H^\circ \leq F^\circ(B_i)$ and, likewise, $H^\circ \leq F^\circ(B_j)$, against Lemma \ref{l:uniqueness}. If on the other hand $\Pr_2(H^\circ) = 2$, then $i, j \in H^\circ$ must commute. So $\Pr_2(H^\circ) = 1$, and $H^\circ$ contains a unique involution $k$. Now if $\ell \in I(H)$, then $\ell$ commutes with both $i$ and $k$, which commute themselves: hence $\ell \in \{i, k, ik\}$. The same shows that $\ell \in \{j, k, jk\}$, and therefore $\ell = k$. So $I(H) = \{k\}$, which is what we wanted.
\end{proof}

Note that if $\lambda$ is a C-line, then $\lambda$ has a unique pole (namely the involution in $C(\lambda)$), which we define to be $\lambda^\perp$. It is trivial to verify that $\perp$ is a polarity of our point-line geometry, so we quickly arrive at the following.

\begin{corollary}\label{c:projplane}
The geometry of involutions with respect to C-lines is that of a projective plane. 
\end{corollary}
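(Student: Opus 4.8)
The plan is to verify the two incidence axioms of a projective plane together with non-degeneracy, the first two being essentially free given the work already done, and the third carrying all of the content.

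For the incidence axioms, I first note that the axiom ``two distinct lines meet in a unique point'' is exactly Lemma~\ref{l:linesintersect}, once we observe that $\perp$ is injective on points (distinct involutions have distinct centralisers by Fact~\ref{f:DJ}, hence distinct polars), so that every pair of distinct lines is of the form $i^\perp \neq j^\perp$. The dual axiom, ``two distinct points lie on a unique line,'' I would deduce by dualising through the polarity, using only the symmetry of commuting. Indeed, $k \in i^\perp$ means precisely that $k \neq i$ and $k$ commutes with $i$, so $k \in i^\perp \iff i \in k^\perp$; hence a line $k^\perp$ passes through both $i$ and $j$ if and only if $k \in i^\perp \cap j^\perp$. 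By Lemma~\ref{l:linesintersect} this intersection is a single involution $r$, so $r^\perp$ is the one and only line through $i$ and $j$.

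The real work is non-degeneracy: I must exhibit four points, no three of which are collinear. Here I would run a genericity argument after pinning down two ranks. First, $\rk I(G) = 2$: every involution has connected centraliser $C(i) = B_i$ of rank $3$ (as recorded in the preceding remark), so each conjugacy class of involutions has rank $5 - 3 = 2$, and there are only finitely many such classes. Second, every line has rank at most $1$. To see this, write $l^\perp = I(B_l)\setminus\{l\}$ and take a non-central involution $u \in B_l$; then $u$ and $l$ are distinct commuting involutions, so they lie in a common maximal $2$-torus $T$, of Pr\"ufer $2$-rank $2$. By Fact~\ref{f.rankOneGroups} its definable hull $d(T)$ has rank at least $2$, and since $d(T) \le (C(u)\cap C(l))^\circ = (B_u \cap B_l)^\circ$ while $B_u \neq B_l$, we land in the commuting-involutions alternative of Corollary~\ref{c:BorelIntersections} with $\rk(B_u\cap B_l)^\circ = 2$. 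Thus the $B_l$-conjugacy class of $u$ has rank $3 - 2 = 1$, and $\rk l^\perp \le 1$. With $\rk(\text{line}) \le 1 < 2 = \rk I(G)$ in hand, any finite union of lines is non-generic in $I(G)$, so I can choose points $P_1, P_2, P_3, P_4$ successively, each generic over the previous ones and off every line already determined; then no three of them are collinear, which is the required quadrangle.

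The main obstacle is precisely this last step, and within it the rank bound on a line. Everything hinges on $\rk l^\perp < \rk I(G)$, which is where Corollary~\ref{c:BorelIntersections} and the maximal $2$-torus containing a commuting pair must be brought to bear to rule out the rank-$1$ alternative of the corollary. The accompanying bookkeeping---finitely many involution classes, connectedness of the relevant centralisers, and the degree-$1$ hypotheses needed for the genericity selection of the four points---is routine but should be stated carefully so that the quadrangle is genuinely produced rather than merely asserted.
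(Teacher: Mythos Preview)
Your treatment of the two incidence axioms is exactly the paper's: Lemma~\ref{l:linesintersect} gives the line axiom, and the self-duality $k \in i^\perp \iff i \in k^\perp$ (which is precisely the polarity the paper invokes in the sentence preceding the corollary) transports it to the point axiom. So on the substance of the corollary you and the paper agree.

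Where you diverge is in supplying a full non-degeneracy argument, which the paper simply omits. Your argument is correct: $\rk I(G) = 2$ since $C(i) = B_i$ has rank $3$; and for any $u \in l^\perp$ the maximal $2$-torus through $u$ and $l$ forces $\rk(B_u \cap B_l) \geq 2$, hence equal to $2$ since $B_u \neq B_l$ are both of rank $3$, so each $B_l$-class in $l^\perp$ has rank $1$. (The appeal to Corollary~\ref{c:BorelIntersections} is actually superfluous here: you only need $B_u \neq B_l$ to cap the intersection at rank $2$.) The generic selection of four points then goes through. This is more than the paper bothers to write, and one could argue it fills a small expository gap; in any case it is sound.
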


At this point, we could try to prove Desargues' Theorem and get a contradiction by coordinatising Hilbert-style, then embedding $G$ into some $\mathrm{PGL}_n(K)$ and following \cite[\S8.2]{BoNe94}. However in this case, it is easier to show that C-collinearity coincides with B-collinearity. Bachmann's Theorem will then apply.

\begin{lemma}
If three involutions are B-collinear, then they are C-collinear.
\end{lemma}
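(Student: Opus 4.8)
The plan is to make C-collinearity concrete and then manufacture the required pole. Since the C-lines are exactly the polars $p^\perp = I(C(p)) - \{p\}$, three distinct involutions are C-collinear precisely when there is an involution $p$, different from all three, commuting with each of them; so the entire task is to produce, from the hypothesis that $ijk$ is an involution, a common ``perpendicular'' $p$. I would first record the elementary translation of the hypothesis: writing $m = ijk$, the condition that $m$ be an involution is $ijk = kji$, and a direct computation shows this is equivalent to $k$ inverting $ij$; by the symmetry built into the Remark on $B$-lines, each of $i,j,k$ then inverts the product of the other two.

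Next I would dispose of the case in which two of the involutions commute. Relabelling, suppose $[i,j]=1$. Then $ij$ is an involution, distinct from $i$, $j$, and $k$ (the last because $k=ij$ would force $m=1$), and since $(ij)k = m$ is an involution while $ij$ is an involution, $ij$ must commute with $k$. Hence $p:=ij$ commutes with all of $i,j,k$ and differs from each, so the three points are C-collinear, lying on the line $(ij)^\perp$. This is quick and uses only that a product of two involutions is an involution exactly when they commute.

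The genuine case is then $[i,j]$, $[j,k]$, $[i,k]$ all nontrivial. Set $w:=ij$ and let $q$ be the unique involution commuting with both $i$ and $j$, which exists by Lemma~\ref{l:linesintersect}; as $q$ commutes with $i$ and $j$ it centralizes $w$. I would then parametrize the involutions inverting $w$: these are exactly the elements $t=ci$ with $c:=ti\in C(w)$ satisfying $c^{i}=c^{-1}$, and for such $t$ one has $[t,q]=1 \iff [c,q]=1$ because $[i,q]=1$. Since our $k$ inverts $w$, it is one such $t$, so it suffices to show $[c,q]=1$ for all admissible $c$; and this is \emph{immediate} once $C(w)=C(ij)$ is known to be abelian. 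In that event every involution inverting $w$, i.e.\ the entire $B$-line through $i$ and $j$, lies in $q^\perp$, establishing $B$-collinear $\Rightarrow$ C-collinear in one stroke.

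The main obstacle is therefore to prove that $C(ij)$ is abelian---or at least that $q$ is central in it---in this pairwise-noncommuting configuration. I expect to extract this from the Borel and torus structure already available: $w=ij$ lies in the Borel $C(q)=B_q$, whose connected Fitting subgroup is abelian of rank $2$ (the Remark), the involution $i$ inverts $w$, and the intersection bounds of Corollaries~\ref{c:BorelIntersections} and~\ref{c:uniqueness} should confine $C^\circ(w)$ to rank $1$, forcing it to be a divisible abelian group by Fact~\ref{f.rankOneGroups} with $q$ its unique involution, whereupon rigidity of tori places $q$ centrally. The delicate point is controlling $C(ij)$ itself rather than merely its connected component---in particular ensuring that the ``$c$-parts'' of the inverting involutions land where $q$ is central---so the finite-versus-connected bookkeeping together with the precise \cibo$_3$ $2$-structure is exactly where the real work will lie.
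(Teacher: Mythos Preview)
Your reduction is sound up through the identification of the target: producing the pole $q$ via Lemma~\ref{l:linesintersect}, writing $k = ci$ with $c \in C(w)$ and $c^i = c^{-1}$, and reducing to $[c,q]=1$ is correct, and the commuting case is handled cleanly. The gap is in your proposed final step. Your expectation that $C^\circ(w)$ has rank $1$ is simply false. Since $B_q/F^\circ(B_q)$ has a unique involution and $i,j \in B_q \setminus \{q\}$ are distinct involutions both mapping to it, one has $w = ij \in F^\circ(B_q)$; as $F^\circ(B_q)$ is abelian of rank $2$ (by the Remark preceding the definition of B-lines), it follows that $C^\circ(w) \supseteq F^\circ(B_q)$ has rank at least $2$. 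So the rank-$1$/rigidity route cannot work, and there is no reason to expect $C(w)$ to be abelian.

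The paper closes the argument differently and more directly, and in fact sidesteps the finite-versus-connected bookkeeping you anticipated. From $F^\circ(B_q) \le C^\circ(w)$ one sees that $C^\circ(w)$ is soluble (it has rank at most $3$ and contains a rank-$2$ abelian normal subgroup), and then Lemma~\ref{l:uniqueness} together with Corollary~\ref{c:uniqueness} force $B_q$ to be the \emph{unique} Borel subgroup containing $C^\circ(w)$. Now your third involution $k$ inverts $w$, hence normalises $C^\circ(w)$, hence normalises $B_q$; self-normalisation of $B_q$ in \cibo$_3$ gives $k \in B_q = C(q)$, so $k \in q^\perp$ (and $k \neq q$ since $q$ centralises $w$ while $k$ inverts the non-involution $w$). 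Note that this actually delivers your desired intermediate statement as a byproduct: anything normalising $C^\circ(w)$ lies in $B_q = C(q)$, so in particular $C(w) \le C(q)$ and $q$ is central in $C(w)$; but the paper's route reaches $k \in C(q)$ without ever needing to discuss $C(w)$ versus $C^\circ(w)$.
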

\begin{proof}
Suppose that $j,k,\ell\in I(G)$ are B-collinear. We may assume that $x:=jk$ (which is inverted by $\ell$) is not an involution. Let $i\in I(G)$ be such that $\{i\} = j^\perp\cap k^\perp$. Thus, $j,k,x \in C(i) - \{i\}$, and to conclude, we show that $\ell \in C(i) - \{i\}$. Set $B_i := C(i)$.

Observe that $x \in F_i:= F^\circ(B_i)$ since $B_i/F_i$ has a unique involution. As remarked above, $F_i$ is abelian, so $C(x) \ge F_i$. As such, $C^\circ(x)$ must be soluble since it has rank at most $3$ and contains a rank $2$ abelian normal subgroup. By Lemma~\ref{l:uniqueness} and Corollary~\ref{c:uniqueness}, $B_i$ is the unique Borel subgroup containing $C^\circ(x)$, so as $C^\circ(x)$ is normalised by $\ell$, $B_i$ is as well. In \cibo$_3$, $B_i$ is self-normalizing, so $\ell \in B_i$. Furthermore, $\ell \neq i$ as otherwise $\ell$  would both invert and centralize $x$, but $x$ was chosen to not be an involution.
\end{proof}

\begin{lemma}
If three involutions are C-collinear, then they are B-collinear.
\end{lemma}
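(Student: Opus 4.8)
The plan is to compute the product of the three involutions inside $C(i)$, where $i$ is the pole of their common C-line, and for that I first need a precise grip on the involutions of $C(i)$. So I would start from three C-collinear involutions $j,k,\ell$, let $i$ be the pole of the C-line carrying them (so that $j,k,\ell \in i^\perp = I(C(i))-\{i\}$), and set $B_i := C(i)$ and $F_i := F^\circ(B_i)$. The key structural point I would establish is that $i$ is the unique involution of $F_i$: indeed $i$ is the only involution lying in $Z(B_i)$, since any involution $w \in Z(B_i)$ would give $B_i \le C(w)$ and hence $C(w) = B_i = C(i)$ (both being rank $3$ self-normalising Borel subgroups in \cibo$_3$), forcing $w = i$ by Fact~\ref{f:DJ}; as $Z^\circ(B_i)$ carries divisible torsion this yields $i \in Z^\circ(B_i) \le F_i$, and since $F_i$ is connected abelian of Pr\"ufer $2$-rank $1$ it has a unique involution, namely $i$. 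Dually $B_i/F_i$ has a unique involution. It then follows that every involution of $i^\perp$ avoids $F_i$ and projects to the unique involution of $B_i/F_i$, so $j,k,\ell$ all lie in a single coset $jF_i$.

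The second step is a short computation in that coset. Writing $k = jf$ and $\ell = jg$ with $f := jk$ and $g := j\ell$ in $F_i$, the requirement that $k$ and $\ell$ be involutions forces $j$ to invert $f$ and $g$, i.e. $f^j = f^{-1}$ and $g^j = g^{-1}$. Since $F_i$ is abelian, the set of elements it inverts under $j$ is a subgroup, so $h := fg^{-1}$ is inverted by $j$ as well. I would then simplify $jk\ell = j(jf)(jg) = f(jg) = fg^{-1}j = hj$, observe that $(hj)^2 = h\,h^j = 1$, and note $hj \neq 1$ because $h \in F_i$ while $j \notin F_i$. Hence $jk\ell$ is an involution, which is exactly B-collinearity of $j,k,\ell$. (The same identities handle gracefully the degenerate cases where two of the involutions coincide.)

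The genuinely load-bearing part is the structural step, not the arithmetic: everything hinges on confining the non-central involutions of $C(i)$ to a single coset of $F_i$, which in turn rests on $i$ being the unique involution of $F_i$. This is where I expect the \cibo$_3$ hypotheses to do the real work --- uniqueness of centralisers to push $i$ into $Z^\circ(B_i)$, and the abelianity of $F_i$ together with the Pr\"ufer $2$-rank $1$ of both $F_i$ and $B_i/F_i$ to pin down the $2$-torsion. Once the three involutions are trapped in one coset the conclusion is routine, and the only subtlety remaining, namely that $jk\ell \neq 1$, is made transparent by the final computation. With both implications in hand, C-collinearity and B-collinearity coincide, and Bachmann's Theorem applies to $\langle I(G)\rangle$, which is the contradiction driving Proposition~\ref{p:cibo3}.
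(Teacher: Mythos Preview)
Your argument is correct, and it is in fact a bit more elementary than the paper's. Both proofs work inside $B_i = C(i)$ with $i$ the pole of the common C-line, and both ultimately exploit that the involutions of $i^\perp$ all lie in the single nontrivial $F_i$-coset. The difference is in how the inversion is extracted. The paper argues structurally: for any $r \in i^\perp$ it uses a maximal decent torus $T \ni r$ to see that $C_{F_i}^\circ(r) = (F_i \cap T)^\circ$ has rank~$1$, whence $[B_i,r]$ is the \emph{unique} rank~$1$ subgroup of $F_i$ without a $2$-torus; this pins down the set $\{b \in B_i : b^r = b^{-1}\}$ as $[B_i,r]\cdot\langle i\rangle$, independent of $r$, so $\ell$ inverts $x = jk$ because $j$ does. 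You bypass the torus analysis entirely: once $k = jf$ and $\ell = jg$ with $f,g \in F_i$, the involution conditions $k^2 = \ell^2 = 1$ already force $j$ to invert $f$ and $g$ individually, and since $F_i$ is abelian the product $jk\ell = (fg^{-1})j$ squares to $1$ by a two-line computation. Your route needs only the coset confinement and the abelianity of $F_i$ (both drawn from the Remark), whereas the paper's route additionally identifies the $-1$-eigenspace on $F_i$; the paper's version gives slightly more structural information, but for the lemma as stated your direct computation is cleaner.
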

\begin{proof}
Suppose that $j,k,\ell\in i^\perp$. Set $x:=jk$ and $B_i = C(i)$. We need to show that $\ell$ inverts $x$.

Let $r$ be any involution in $i^\perp$.
As $r$ belongs to, hence centralises, some maximal decent torus $T$ of $B_i \neq B_r$ one has $C_{B_i}^\circ(r) = T \not\leq F^\circ(B_i)$. Hence in its action on $F_i:=F^\circ(B_i)$, $r$ centralises a rank $1$ subgroup, namely $(F_i\cap T)^\circ$, and $X := [B_i, r]$ is therefore the only definable, connected, infinite, proper subgroup of $F_i$ not containing a $2$-torus. Hence $X$ does not depend on $r$ and neither does $B^{-_i}:=\{b \in B_i: b^r = b^{-1}\} = X\cdot \langle i\rangle$. So $\ell$, like $j$, inverts $x$.
\end{proof}

\begin{proof}[Proof of Proposition~\ref{p:cibo3}]
Combining the previous two lemmas with Corollary~\ref{c:projplane} and Bachmann's Theorem, we are done since quadratic forms on $K^d$ are always isotropic whenever $K$ is algebraically closed (which is the case here by Macintyre's Theorem \cite[Theorem~8.1]{BoNe94}) and $d \ge 2$.
\end{proof}

\section{Reflections on \texorpdfstring{\cibo$_2$}{CiBo2}}\label{sec.reflectionscibo2}
In the case of rank $5$, \cibo$_2$ immediately succumbed to the rank computation $\rk G = 3 \rk C(i)$ from \cite{DeJa16}, but it should be mentioned, albeit briefly, that a (generically defined) projective geometry is lurking here too.

\begin{observation}
If $G$ is a group of type \cibo$_2$, then a generic pair of C-lines intersect in a unique point.
\end{observation}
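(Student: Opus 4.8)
The plan is to translate the statement into a rank computation on an incidence variety and then extract uniqueness from the geometry of a single strongly real element. Write $r := \rk C^\circ(i)$, so that $\rk G = 3r$ by the defining rank equality of \cibo$_2$, while $\rk I(G) = \rk(G/C(i)) = 2r$ and each C-line $i^\perp$ has rank $r$ (the involutions of $C(i) - \{i\}$ fill out the reflection cosets $wC^\circ(i)$, every element of which squares to $1$ because $w$ inverts $C^\circ(i)$). For a generic pair $i \neq j$ the involutions do not commute, so $i, j \notin C(i) \cap C(j)$ and hence $i^\perp \cap j^\perp = I(C(i) \cap C(j))$; the claim thus reduces to showing that, generically, $C(i) \cap C(j)$ contains exactly one involution.

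For existence and finiteness I would count the incidence variety
\[
\Gamma := \{(i,j,k) \in I(G)^3 : k \in i^\perp \cap j^\perp\}.
\]
Projecting to the pair $(i,k)$ subject to $k \in i^\perp$ gives a set of rank $2r + r = 3r$, and the remaining freedom in $j$ is exactly the condition $j \in k^\perp$, of rank $r$; hence $\rk \Gamma = 4r = \rk(I(G) \times I(G))$. The fibre of the projection $\Gamma \to I(G)^2$ over $(i,j)$ is $i^\perp \cap j^\perp \subseteq C(i) \cap C(j)$, and for generic (hence non-commuting) $i,j$ the abelian Borel subgroups $C^\circ(i) \neq C^\circ(j)$ meet trivially in their connected components by the $N_\circ^\circ$-uniqueness principles of \cite{DeJa16}; since $(C(i) \cap C(j))^\circ \le (C^\circ(i) \cap C^\circ(j))^\circ = 1$, the fibre is finite. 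Comparing ranks then forces the image to be generic, so a generic pair of C-lines meets in a nonempty finite set.

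Uniqueness is the crux, and here I would pivot to the product $x := ij$ and its centraliser $S := C^\circ(x)$. Any $k \in i^\perp \cap j^\perp$ commutes with both $i$ and $j$, so $i, j \in k^\perp$ are reflections in $C(k)$ and therefore invert $T_k := C^\circ(k)$; consequently $x = ij$ centralises $T_k$, giving $T_k \le S$. Granting that $\rk S = r$, equality of ranks and connectedness force $T_k = S$ for every $k$ in the intersection. In particular $S = C^\circ(k)$ is the abelian Borel attached to $k$, whence every such $k$ centralises $S$ and lies in $C(i_S)$, where $i_S$ is the unique involution of $S$ (this is where $\pr_2(G) = 1$ enters, also giving $C^\circ(S) = C^\circ(i_S) = S$). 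But in $C(i_S)$ every involution other than $i_S$ is a reflection inverting $S$, so it cannot centralise $S$. Hence $k = i_S$, and $i^\perp \cap j^\perp = \{i_S\}$.

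The main obstacle is establishing $\rk C^\circ(ij) = r$ for generic $i,j$. The inequality $\rk S \ge r$ comes for free from $T_k \le S$ once existence is in hand; the reverse inequality is the delicate point and requires knowing that a generic strongly real element of a \cibo$_2$-group is regular, with centraliser a conjugate of the abelian Borel $C^\circ(i)$. I would extract this from the structure theory of \cibo$_2$ in \cite{DeJa16}, using that $x$ centralises the self-centralising abelian Borel $S$ (so $C^\circ(S) = S$ sharply confines the connected centraliser of $x$) to rule out a non-soluble centraliser of rank exceeding $r$.
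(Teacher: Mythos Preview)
Your argument is correct in outline but takes a considerably longer path than the paper, and the step you flag as the ``main obstacle'' is both the crux and the place where your justification becomes circular.

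The paper's proof is a two-line application of a single structural fact from \cite{DeJa16}: in \cibo$_2$ the conjugates of the abelian Borel $C^\circ(i)$ are pairwise disjoint and cover a generic subset of $G$, so a generic $g$ lies in $C^\circ(k)$ for a \emph{unique} involution $k$. Since any involution in $C(k)\setminus\{k\}$ inverts $C^\circ(k)$, the generic element is strongly real; hence for generic $(i,j)$ the element $ij$ is generic, giving a unique $k$ with $ij\in C^\circ(k)$, and one checks $k\in i^\perp\cap j^\perp$. Conversely, for any $\ell\in i^\perp\cap j^\perp$ the Sylow structure (both $i$ and $j$ lie in the inverting coset of $C^\circ(\ell)$ inside $C(\ell)$) forces $ij\in C^\circ(\ell)$, whence $\ell=k$ by the same uniqueness.

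Your incidence-variety count for existence is valid but unnecessary once the \cite{DeJa16} fact is in hand. For uniqueness you reduce to $\rk C^\circ(ij)=r$, which is fine, but your proposed justification---``$x$ centralises the self-centralising abelian Borel $S$''---assumes what is to be shown: $S=C^\circ(x)$ is not known to be a Borel until you have proved $S=T_k$. Self-centralisation of $T_k$ only yields $C^\circ(T_k)=T_k$; it says nothing about $C^\circ(x)$ unless you already know $x$ is \emph{generic in $T_k$}, and that in turn requires the uniqueness of $k$ among Borels containing $x$---precisely the \cite{DeJa16} fact the paper invokes directly. So your detour through $\rk S$ ultimately rests on the same black box, after which the paper's direct argument is shorter. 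The paper also never needs your appeal to disjointness of distinct abelian Borels for the finiteness of fibres; everything is absorbed into the single generic-uniqueness statement.
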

\begin{proof}
By \cite{DeJa16}, if $g$ is generic in $G$, then there is exactly one $k\in I(G)$ such that  $g\in C^\circ(k)$. Additionally, since $C^\circ(k)$ is inverted by any $\omega\in I(C(k))-\{k\}$, the generic element of $G$ is a product of involutions. 

Now, let $i,j$ be a generic pair of involutions. Then $ij$ is generic in $G$, so there is exactly one involution $k$ such that  $ij\in C^\circ(k)$. This implies that $i,j\in N(C^\circ(k))$, so $i$ and $j$ normalise the maximal $2$-torus of $C^\circ(k)$. Thus, $i,j \in C(k)$, so $k \in i^\perp\cap j^\perp$. Further, if $\ell \in i^\perp\cap j^\perp$, the structure of the Sylow $2$-subgroup implies that $ij \in C^\circ(\ell)$, so $\ell = k$.
\end{proof}

We also obtain, as in \cibo$_3$, that C-collinearity coincides with B-collinearity.

\begin{observation}
If $G$ is a group of type \cibo$_2$, then C-collinearity \emph{generically}  coincides with B-collinearity.
\end{observation}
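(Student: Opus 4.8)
The plan is to mirror the two lemmas from the \cibo$_3$ analysis (that B-collinearity and C-collinearity coincide), but now only \emph{generically}, using the rank computation $\rk G = 3\rk C(i)$ together with the fact, recorded in the previous observation, that a generic pair of involutions $i,j$ meets in a unique point $k \in i^\perp \cap j^\perp$ with $ij \in C^\circ(k)$. The statement to be proven is really the conjunction of two generic implications, so I would split the argument accordingly and address each direction on a suitable generic (hence cogeneric, hence nonempty and large) set of configurations.

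\textbf{The B-to-C direction.} First I would take three generically chosen involutions $j,k,\ell$ that are B-collinear, meaning $jk\ell \in I(G)$, and arrange that $x := jk$ is not an involution (the degenerate case is nongeneric and can be discarded). By the previous observation applied to the generic pair $j,k$, there is a unique involution $i$ with $\{i\} = j^\perp \cap k^\perp$ and $x = jk \in C^\circ(i)$. The goal is to force $\ell \in i^\perp$, i.e.\ $\ell \in C(i)-\{i\}$. Exactly as in the \cibo$_3$ lemma, I would argue that $C^\circ(x)$ is soluble (it contains the abelian $F^\circ(C^\circ(i))$ as a large normal subgroup and has bounded rank), that $C^\circ(i)$ is the unique Borel subgroup containing $C^\circ(x)$ by Lemma~\ref{l:uniqueness} and Corollary~\ref{c:uniqueness}, and that $\ell$ normalises $C^\circ(x)$ (since $\ell$ inverts $x$), hence normalises $C^\circ(i)$. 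The one point requiring care in \cibo$_2$, as opposed to \cibo$_3$, is that $C(i)$ is not known to be self-normalising in the strong sense; here I would instead invoke that $\ell$ normalises $C^\circ(i)$, hence its unique maximal $2$-torus, placing $\ell \in C(i)$, and then rule out $\ell = i$ since $\ell$ inverts the noninvolution $x$.

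\textbf{The C-to-B direction.} For this I would take $j,k,\ell \in i^\perp$ with $i$ generic, set $x := jk$, and show $\ell$ inverts $x$. The argument in \cibo$_3$ that $B^{-_i} := \{b \in B_i : b^r = b^{-1}\}$ is independent of the choice of the inverting involution $r \in i^\perp$ should go through essentially verbatim, because it only uses that each such $r$ centralises a maximal decent torus of $B_i = C^\circ(i)$ not lying in $F^\circ(B_i)$, acts on the abelian $F_i := F^\circ(B_i)$ centralising a rank $1$ piece, and thereby determines the same characteristic rank $1$ inverted subgroup $X = [B_i, r]$. Since $F_i$ is abelian with $\rk F_i = 2$ and Pr\"ufer $2$-rank $1$ (the \cibo$_2$ analog of the opening remark of the \cibo$_3$ section), $X$ is the unique definable connected proper infinite subgroup of $F_i$ missing the $2$-torus, so it does not depend on $r$; consequently both $j$ and $\ell$ invert $x$.

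\textbf{Main obstacle.} The hard part will not be either individual computation but the careful bookkeeping of \emph{genericity} throughout: unlike \cibo$_3$, where $I(G)$ is a genuine projective plane and the lemmas hold for \emph{all} triples, here I only control configurations built from a generic pair (or generic point $i$), so I must ensure that the triples $(j,k,\ell)$ under consideration form a cogeneric set in the relevant product space, that the excluded degeneracies (e.g.\ $x$ an involution, $i = \ell$) are genuinely nongeneric, and that the structural facts about $B_i = C^\circ(i)$ available in \cibo$_2$ (abelian $F^\circ(B_i)$ of rank $2$, correct Pr\"ufer $2$-rank distribution) actually hold for generic $i$. Establishing the analog of the opening \cibo$_3$ remark---that $F^\circ(B_i)$ is abelian of rank $2$ with the right torsion---may require first verifying $\rk C(i) = \rk C^\circ(i)$ and $\rk B_i$ from the \cibo$_2$ data $\rk G = 3\rk C(i)$ and $m_2(G) = 2$, and this dimension accounting is where the argument is most likely to need genuine input rather than transcription from the \cibo$_3$ case.
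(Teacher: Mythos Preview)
Your proposal transplants the \cibo$_3$ arguments, but in \cibo$_2$ both directions simplify drastically, and the paper's proof is essentially two lines.

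For the C-to-B direction, you have overlooked the defining feature of \cibo$_2$ in Fact~\ref{f:DJ}: $C^\circ(i)$ is an \emph{abelian} Borel subgroup inverted by \emph{every} involution in $C(i)\setminus\{i\}$. So if $r,s,t\in i^\perp$, then (by the Sylow structure) $rs\in C^\circ(i)$ and $t$ inverts it---done, and without any genericity assumption on this side. Your attempt to mimic the \cibo$_3$ lemma by analysing $X=[B_i,r]$ as a proper rank-$1$ subgroup of $F^\circ(B_i)$ does not make sense here: since $B_i=C^\circ(i)$ is abelian one has $F^\circ(B_i)=B_i$, and $r$ inverts all of it, so $[B_i,r]$ is not proper and there is no ``unique subgroup missing the $2$-torus'' to isolate. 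The ``dimension accounting'' you flag as the main obstacle is a symptom of trying to reproduce \cibo$_3$ structure that is simply absent in \cibo$_2$.

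For the B-to-C direction, your invocation of Lemma~\ref{l:uniqueness} and Corollary~\ref{c:uniqueness} is misplaced: those are results specific to the ambient rank-$5$ group, whereas this observation concerns an arbitrary \cibo$_2$ group (and \cibo$_2$ does not even occur in rank $5$). The paper instead exploits the uniqueness clause of the \emph{previous} observation directly: for independent involutions $s,t$ the product $st$ is generic in $G$, so there is a \emph{unique} involution $k$ with $st\in C^\circ(k)$; if $r$ inverts $st$ then $(st)^r=(st)^{-1}\in C^\circ(k)\cap C^\circ(k^r)$, and uniqueness forces $k^r=k$, i.e.\ $r\in C(k)$. No Borel-intersection machinery is needed at all.
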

\begin{proof}
If $r, s, t\in i^\perp$, then $rs \in C^\circ(i)$ is inverted by $t$. Conversely suppose that $r$ inverts $st$ where $s, t$ are \emph{independent} involutions. Then by our previous observation, $s^\perp\cap t^\perp = \{k\}$ for some involution $k$. Now $st \in C^\circ(k)$, and as observed before, $k$ is the unique involution for which $st \in C^\circ(k)$. Thus, since $r$ inverts $st$, $r$ must fix $k$. We conclude that $r,s,t \in k^\perp$.
\end{proof}

\section{Proof of the main corollary}\label{sec:proofcorollary}

We now take up the classification of nonsoluble connected groups of Morley rank $5$. Facts~\ref{f.rankTwoGroups} and \ref{f.rankThreeGroups} as well as the following corollary to Fact~\ref{f.rankFourGroups} will be used frequently in our analysis. 

\begin{fact}[{\cite[Corollary~A]{WiJ14a} together with \cite{FrO16}}]\label{f.rankFourClassification}
If $G$ is a nonsoluble connected group of Morley rank $4$, then for $F:=F^\circ(G)$, $\rk F \le 1$, and $G$ is classified as follows.
\begin{enumerate}
\item $\rk F = 1$ and $G = F * Q$ with $Q\cong\pssl_2(K)$, or
\item $\rk F = 0$ and $G$ is a quasisimple bad group.
\end{enumerate}
\end{fact}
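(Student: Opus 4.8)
The plan is to run a rank-induction argument that peels off the connected Fitting subgroup $F = F^\circ(G)$ and reduces everything to the classification facts already in hand. First I would bound $\rk F$. Since $G$ is nonsoluble it is not nilpotent, so $F \neq G$ and $\rk F \le 3$. If $\rk F \in \{2,3\}$, then $G/F$ is a connected group of rank at most $2$, hence soluble by Fact~\ref{f.rankTwoGroups} (or Fact~\ref{f.rankOneGroups}); as $F$ is soluble, $G$ would be soluble-by-soluble, a contradiction. Thus $\rk F \le 1$, which is the first assertion.

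The workhorse is a lemma about rank-$3$ groups that I would isolate first: a connected nonsoluble group $H$ of rank $3$ is quasisimple with $H/Z(H) \cong \psl_2(K)$. Indeed, the same soluble-by-soluble argument shows $Z(H)^\circ = 1$ and $F^\circ(H) = 1$; any nontrivial connected soluble normal subgroup is then excluded (its Fitting subgroup would be a nontrivial connected normal nilpotent subgroup), so every proper infinite definable connected normal subgroup would be nonsoluble of rank $3$, i.e.\ there is none, and $H/Z(H)$ is simple. Fact~\ref{f.rankThreeGroups} identifies it as $\psl_2(K)$, and perfectness of $\psl_2(K)$ forces the connected group $H'$ to equal $H$, so $H$ is quasisimple.

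For $\rk F = 1$, the quotient $\bar G := G/F$ is connected, nonsoluble (else $G$ is soluble), of rank $3$, hence quasisimple with $\bar G/Z(\bar G) \cong \psl_2(K)$ by the previous step. As $F$ is a connected rank-$1$, hence abelian, group, $G$ acts on it by conjugation; the kernel $C_G(F) \trianglelefteq G$ contains $F$ and has normal image in $\bar G$. By quasisimplicity this image is either all of $\bar G$—giving $F \le Z(G)$—or central, in which case $\bar G$ modulo a finite group would act faithfully and definably on the rank-$1$ group $F$, impossible for a quasisimple group of rank $3$. Hence $F \le Z(G)$, and since $Z(G)^\circ \le F$ we get $F = Z(G)^\circ$. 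It then remains to split the central extension $1 \to F \to G \to \bar G \to 1$: using that $\bar G$ is generated by its rank-$1$ root subgroups and lifting these through $F$ via the structure of connected abelian groups of finite Morley rank, one produces a quasisimple complement $Q \cong \pssl_2(K)$ of rank $3$ with $[F,Q]=1$ and $F \cap Q$ finite, so $G = F * Q$.

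For $\rk F = 0$ we have $F = 1$, so $Z(G)$ is finite and $G$ has no nontrivial connected normal nilpotent subgroup. Taking a minimal infinite definable connected normal subgroup $N$, the fact that the derived subgroup of a connected group is connected forces $N$ to be nonsoluble (an abelian or finite-derived $N$ would lie in $F = 1$); thus $\rk N \in \{3,4\}$. If $\rk N = 3$, so $N$ is proper, I would rule this out: $C_G(N)$ is finite (a positive-rank centralizer would be a connected normal abelian subgroup, hence inside $F$), so $G$ maps with finite kernel into $\mathrm{Aut}(N)$ and the rank-$\le 1$ quotient $G/(N\cdot C_G(N))$ acts as a positive-dimensional group of \emph{outer} automorphisms of $\psl_2(K)$—impossible, since the connected definable automorphism group of $\psl_2(K)$ is $\pgl_2(K)$ of rank $3$. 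Hence $G$ has no proper infinite definable connected normal subgroup; being nonsoluble it is perfect (its connected derived subgroup is normal and infinite, so equals $G$), so $G$ is quasisimple and $G/Z(G)$ is simple of rank $4$. Fact~\ref{f.rankFourGroups} then renders $G/Z(G)$ bad with proper definable subgroups of rank at most $1$, and this transfers to $G$ because a proper connected subgroup of $G$ has proper (hence rank-$\le 1$) image. The main obstacle is the splitting in the $\rk F = 1$ case: producing the rank-$3$ quasisimple complement $Q$ amounts to showing that $\psl_2(K)$ admits no perfect connected central extension by a rank-$1$ group, which I expect to require the theory of central extensions (the simple connectedness of $\ssl_2(K)$) rather than pure rank arithmetic.
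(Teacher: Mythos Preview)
The paper does not prove this statement; it is quoted as a Fact, with the structure coming from \cite[Corollary~A]{WiJ14a} and the elimination of the rank-$3$ bad case from \cite{FrO16}. There is therefore no proof in the paper to compare against.

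That said, your outline is essentially the argument of \cite{WiJ14a} and is correct in shape. Two points deserve tightening. In the $\rk F = 1$ case, your closing diagnosis is right: the only real content is the splitting, and the relevant tool is the theory of perfect central extensions of $\psl_2(K)$ in finite Morley rank (e.g.\ \cite[II,~Proposition~3.1]{ABC08}, used in the paper's \S\ref{sec:proofcorollary}). Your phrase about ``lifting rank-$1$ root subgroups through $F$'' is not how that argument actually runs; the clean route is to observe that $G'F = G$ since $\bar G$ is perfect, show $G' \neq G$ (else $G$ would itself be a perfect central extension of $\psl_2(K)$, hence of rank~$3$), and then apply the central-extension theorem to the perfect rank-$3$ group $Q := G'$ to identify $Q \cong \pssl_2(K)$, giving $G = F * Q$.

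In the $\rk F = 0$, $\rk N = 3$ subcase, your outer-automorphism argument is morally correct but awkward as written (definability of $\mathrm{Aut}(N)$ is not free). It is cleaner to argue exactly as in the paper's Lemma~\ref{l:classifyQ3}: since $N$ is quasisimple with $N/Z(N) \cong \psl_2(K)$, \cite[II,~Corollary~2.26]{ABC08} gives $G = N * C_G(N)$, whence $C_G^\circ(N)$ is a connected rank-$1$ normal abelian subgroup of $G$, forcing $C_G^\circ(N) \le F = 1$ and the desired contradiction.
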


\begin{setup}
Let $G$ be a nonsoluble connected group of rank $5$; set $F:=F^\circ(G)$.
\end{setup}

Since connected groups of rank $2$ are soluble, we have that $\rk F \le 2$ and that $\rk G' \ge 3$.

\begin{lemma}\label{l:classifyQ3}
If $G$ has a definable, connected, normal subgroup $Q$ of rank $3$, then $G = H * Q$ with  $Q\cong\pssl_2(K)$ and $H$ connected and soluble. Consequently,  in this case we have $1 \le \rk F \le 2$ and $Q = G''$.
\end{lemma}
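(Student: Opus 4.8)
The plan is to identify $Q$ as a quasisimple group and then split off a soluble complement as its centraliser.

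First I would pin down the isomorphism type of $Q$. Since $Q$ is definable, connected and normal of rank $3$, the quotient $G/Q$ is connected of rank $2$, hence soluble by Fact~\ref{f.rankTwoGroups}; as $G$ is nonsoluble, $Q$ cannot be soluble. A connected nonsoluble group of rank $3$ has trivial solvable radical (any nontrivial connected normal soluble subgroup would have rank $\le 2$, forcing $Q$ itself to be soluble), is therefore perfect with finite centre, and $Q/Z(Q)$ is simple of rank $3$; by Fact~\ref{f.rankThreeGroups} we conclude $Q \cong \pssl_2(K)$ for some algebraically closed field $K$.

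Next, and this is the heart of the matter, I would set $C := C_G(Q)$ and show $G = Q\cdot C$. As $Q$ is normal, $C$ is a definable normal subgroup of $G$, and $Q \cap C = Z(Q)$ is finite, so $\rk(QC) = 3 + \rk C \le 5$, giving $\rk C \le 2$. The group $\bar G := G/C$ is connected, acts faithfully on $Q$, and contains the normal subgroup $QC/C \cong \psl_2(K)$ of rank $3$; moreover $C_{\bar G}(QC/C) = 1$ since $\mathrm{Inn}(Q)$ is self-centralising in $\mathrm{Aut}(Q)$ (as $Q/Z(Q)$ is centreless). Thus $\bar G$ embeds in $\mathrm{Aut}(Q)$ as a connected group of finite Morley rank containing $\mathrm{Inn}(Q)$. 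Over an algebraically closed field one has $\pgl_2(K) = \psl_2(K)$, and there is no positive-rank definable group of field (or graph) automorphisms, so the connected definable automorphisms of $Q$ are exactly the inner ones; hence $\rk \bar G = 3$ and, by connectedness, $\bar G = QC/C$. This forces $\rk C = 2$ and $G = Q\cdot C$. Since $G/Q \cong C/Z(Q)$ is connected, $C = Z(Q)\cdot C^\circ$, so in fact $G = Q\cdot C^\circ$; writing $H := C^\circ$ we get $G = H * Q$.

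Finally I would read off the structure of $H$ and the stated consequences. The subgroup $H$ is connected of rank $2$, hence soluble by Fact~\ref{f.rankTwoGroups}, completing $G = H * Q$ with $H$ soluble and $Q \cong \pssl_2(K)$. For the consequences: $Q$ is perfect and commutes with $H$, and $H$ is soluble connected of rank $2$ so $H'' = 1$; computing in the central product gives $G' = H'\cdot Q$ and $G'' = H''\cdot Q = Q$, so $Q = G''$. Moreover $H = C^\circ_G(Q)$ is a nontrivial normal connected soluble subgroup of $G$, so its (nontrivial) connected Fitting subgroup is a connected normal nilpotent subgroup of $G$ contained in $F$, whence $\rk F \ge 1$; combined with the standing bound $\rk F \le 2$ this yields $1 \le \rk F \le 2$.

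The main obstacle is the middle step, specifically the claim that $\bar G$ cannot be strictly larger than $\mathrm{Inn}(Q)$. Everything hinges on the rigidity of $\pssl_2(K)$ in the finite Morley rank category: one must know that $\psl_2(K)$ admits no definable connected group of outer automorphisms of positive rank, which is what rules out $\rk C \le 1$ and pins $\bar G$ down to the inner automorphisms. If one prefers to avoid invoking the automorphism group directly, the same conclusion can be reached by a more hands-on analysis of how a putative rank-$\ge 4$ quotient $\bar G$ could act faithfully on the rank-$3$ group $Q$, but either way this rigidity is the crux.
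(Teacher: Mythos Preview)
Your argument is correct and follows the same overall line as the paper: first show $Q$ is nonsoluble, hence isomorphic to $\pssl_2(K)$ by Fr\'econ's theorem, then establish $G = Q * C_G(Q)$, and finally read off the consequences. The only real difference lies in how the splitting $G = Q * C_G(Q)$ is obtained.

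The paper dispatches that step in one line by citing \cite[II,~Corollary~2.26]{ABC08}, which states precisely that a definable normal quasisimple algebraic subgroup of a group of finite Morley rank is complemented by its centraliser. You instead unpack this: you pass to $\bar G = G/C_G(Q)$, observe it acts faithfully on $Q$ with $QC/C \cong \psl_2(K)$ self-centralising, and then invoke the rigidity of $\psl_2(K)$ (no positive-rank definable outer automorphisms) to force $\bar G = QC/C$. This is exactly the mechanism behind the cited corollary, so you have correctly identified the crux and are essentially reproving the needed special case rather than quoting it. Your closing caveat is apt: the nontrivial input is indeed that definable connected automorphisms of $\psl_2(K)$ are inner, and in practice one cites this (e.g.\ from \cite{ABC08}) rather than rederiving it from Borel--Tits plus the absence of definable field automorphisms. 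Your derivation of $Q = G''$ and $1 \le \rk F \le 2$ is correct and slightly more explicit than the paper, which leaves these as evident consequences.
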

\begin{proof}
Since $G$ is nonsoluble, the same is true of $Q$, so (invoking \cite{FrO16}) we find that $Q\cong\pssl_2(K)$ for some algebraically closed field $K$. By \cite[II,~Corollary~2.26]{ABC08}, $G = Q* C(Q)$. As $\rk C(Q) =2$, $C^\circ(Q)$ is soluble.
\end{proof}

\begin{lemma}
If $\rk F = 0$, then $G'$ is quasisimple and bad of rank $4$ or $5$.
\end{lemma}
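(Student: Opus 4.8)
The plan is to exploit the hypothesis $\rk F = 0$ as strongly as possible and then split on $\rk G'$. Since $F = F^\circ(G) = 1$, the group $G$ has no nontrivial definable connected nilpotent normal subgroup; in particular $Z^\circ(G) = 1$, so $Z(G)$ is finite. More usefully, $G$ has no nontrivial definable connected \emph{soluble} normal subgroup $N$ at all: such an $N$ would satisfy $F^\circ(N) \neq 1$ (using $C_N^\circ(F^\circ(N)) \le F^\circ(N)$ for connected soluble $N$), and $F^\circ(N)$, being characteristic in $N$ hence a definable connected nilpotent normal subgroup of $G$, would lie in $F = 1$ -- a contradiction. Recalling from the Setup that $G'$ is definable, connected, and normal with $\rk G' \ge 3$, and that $G'$ is nonsoluble (else $G$ would be soluble, as $G/G'$ is abelian), I next rule out $\rk G' = 3$: a definable connected normal subgroup of rank $3$ would, by Lemma~\ref{l:classifyQ3}, force $\rk F \ge 1$. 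Hence $\rk G' \in \{4,5\}$.

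Suppose first $\rk G' = 4$. Then $G'$ is a nonsoluble connected group of rank $4$, so Fact~\ref{f.rankFourClassification} applies. Its first alternative would give $\rk F^\circ(G') = 1$; but $F^\circ(G')$ is characteristic in $G'$, hence a definable connected nilpotent normal subgroup of $G$, so $F^\circ(G') \le F = 1$, a contradiction. Thus the second alternative holds and $G'$ is quasisimple and bad of rank $4$, as desired.

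Now suppose $\rk G' = 5$. Since $G'$ is a definable connected subgroup of the connected group $G$ of the same rank, $G' = G$; that is, $G$ is perfect. I claim $G$ has no proper nontrivial definable connected normal subgroup $N$. Ranks $1$ and $2$ are excluded because such $N$ is soluble (Facts~\ref{f.rankOneGroups} and \ref{f.rankTwoGroups}) and nontrivial connected soluble normal subgroups were ruled out above; rank $3$ is excluded by Lemma~\ref{l:classifyQ3} as before; and for rank $4$ the quotient $G/N$ is a connected group of rank $1$, hence abelian, while being perfect (as $G$ is), hence trivial, forcing $G = N$ against $\rk N = 4$. Consequently every definable normal subgroup of $G$ is either finite -- thus central, being a finite normal subgroup of a connected group -- or all of $G$, so $\bar G := G/Z(G)$ is a connected group of Morley rank $5$ whose only definable normal subgroups are $1$ and $\bar G$; in particular $\bar G$ is simple and our main Theorem applies.

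By the main Theorem, $\bar G$ is bad and all of its proper definable connected subgroups are nilpotent of rank at most $2$. These properties lift through the finite central quotient $G \to \bar G$: if $S < G$ is a proper definable connected subgroup, then its image in $\bar G$ is again proper (else $S\,Z(G) = G$ would force $S = G$, since $Z(G)$ is finite and $S$ is connected of the resulting full rank), hence nilpotent of rank at most $2$; as $S$ is a central extension of this image by the finite group $S \cap Z(G)$, it is itself nilpotent of rank at most $2$. Therefore every proper definable soluble subgroup of $G$ is nilpotent-by-finite, so $G$ is bad; together with the perfection of $G$ and the simplicity of $\bar G$, this shows $G = G'$ is quasisimple and bad of rank $5$, completing the proof. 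The only delicate point is this perfect case: one must correctly classify the definable normal subgroups of $G$ -- the rank-$4$ exclusion via the perfect quotient being the key maneuver -- in order to pass to the simple group $\bar G$, invoke the main Theorem, and then transport badness back across the finite centre.
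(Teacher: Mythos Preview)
Your proof is correct and follows essentially the same approach as the paper: rule out $\rk G' = 3$ via Lemma~\ref{l:classifyQ3}, show $G'$ is quasisimple, and then invoke Fact~\ref{f.rankFourClassification} or the main Theorem on $G'/Z(G')$. The only organizational difference is that you split on $\rk G' \in \{4,5\}$ and in the rank~$4$ case apply Fact~\ref{f.rankFourClassification} directly to $G'$ (using $F^\circ(G')=1$), whereas the paper argues uniformly that $G'$ has no nontrivial proper definable connected normal subgroup (via the commutator $[N,G']$ trick, which also handles abstract normal subgroups) before appealing to the classification results.
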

\begin{proof}
Assume $\rk F = 0$. By Lemma~\ref{l:classifyQ3}, $G'$ has rank $4$ or $5$, and in either case, $F^\circ(G')=1$. Now, if $Q$ is a non-trivial proper definable connected normal subgroup of $G'$, then it must be that $Q$ has rank $3$ and $Q = G''$. Consequently, Lemma~\ref{l:classifyQ3} implies that $F$ is non-trivial, a contradiction, so no such $Q$ exists. Thus, if $N$ is any proper normal subgroup of $G'$, then $[N,G']$, which is definable and connected, must be trivial, so every proper normal subgroup of $G'$ is central in $G'$. 

We now have that $G'$ is quasisimple with a finite centre, so combining our main theorem with Fact~\ref{f.rankFourClassification}, we find that $G'$ modulo its finite centre is a bad group. Hence $G'$ is bad.
\end{proof}

\begin{lemma}
If $\rk F = 1$ and $G$ is quasisimple, then $G$ is a bad group.
\end{lemma}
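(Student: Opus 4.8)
The plan is to reduce to the rank $4$ classification (Fact~\ref{f.rankFourClassification}) by quotienting out $F$. The first step is to identify $F$ as the connected centre $Z^\circ(G)$. Since $G$ is quasisimple, $G/Z(G)$ is simple, and the image of the connected normal nilpotent subgroup $F$ is a normal subgroup of $G/Z(G)$, hence trivial or everything. If it were everything, then $G = F\cdot Z(G)$ would be nilpotent, hence soluble, contradicting nonsolubility; so $F \le Z(G)$, and being connected and central, $F = Z^\circ(G)$. In particular $\rk Z^\circ(G) = 1$.

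Next I would analyse $\bar G := G/F$, a connected nonsoluble group of rank $4$ (nonsoluble because $F$ is soluble and $G$ is not). The key point is that $F^\circ(\bar G) = 1$: the preimage $N$ in $G$ of any connected normal nilpotent subgroup of $\bar G$ is a connected normal subgroup with $N/F$ nilpotent and $F \le Z(N)$ central, whence $N/Z(N)$ is a quotient of $N/F$ and so $N$ is nilpotent; thus $N \le F^\circ(G) = F$ and the image is trivial. With $F^\circ(\bar G) = 1$, Fact~\ref{f.rankFourClassification} forces case (2): $\bar G$ is a quasisimple bad group of rank $4$.

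Finally I would transfer badness up the central extension $1 \to F \to G \to \bar G \to 1$. Let $S$ be a proper definable soluble subgroup of $G$; I must show it is nilpotent-by-finite. Its image $\bar S = SF/F$ is definable and soluble, and it is proper in $\bar G$, since otherwise $G = SF$ would be soluble. Because $\bar G$ is bad, $\bar S$ is nilpotent-by-finite, say with a normal nilpotent subgroup $\bar S_0$ of finite index. Pulling back $\bar S_0$ to $S_0 \le SF$ and again using that $F$ is central gives that $S_0$ is nilpotent of finite index in $SF$; hence $SF$, and therefore its subgroup $S$, is nilpotent-by-finite. As $G$ is nonsoluble, this shows $G$ is bad.

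The steps are largely routine once the structure is in place; the only points requiring care are the two nilpotency-lifting arguments through the central subgroup $F$ (used both to pin down $F^\circ(\bar G) = 1$ and to lift nilpotent-by-finiteness), which rest on the elementary fact that a group is nilpotent as soon as its quotient by a central subgroup is. I do not anticipate a serious obstacle here: the genuine content is carried by the rank $4$ classification, and the surrounding argument is bookkeeping about central extensions.
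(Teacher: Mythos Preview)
Your proof is correct and follows essentially the same route as the paper: identify $F = Z^\circ(G)$, apply the rank $4$ classification to $G/F$ to see it is bad, and lift badness through the central extension. The paper compresses the two nilpotency-lifting arguments (that $F^\circ(G/F)=1$ and that badness pulls back) into the phrase ``it is easy to see'', but your expanded version is exactly what is meant.
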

\begin{proof}
By Fact~\ref{f.rankFourClassification}, we find that $G/F$ is a bad group, and since $F = Z^\circ(G)$, it is easy to see that $G$ must also be bad.
\end{proof}

\begin{lemma}
If $\rk F = 1$ and $G$ is not quasisimple, then either $G = H * G''$ with $H/Z(H) \cong \agl_1(L)$ and $G''\cong\pssl_2(K)$, or $G = F * G'$ with $G'$ a rank $4$ quasisimple bad group.
\end{lemma}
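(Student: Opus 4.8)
The plan is to follow the same strategy used in the preceding lemmas: exploit the structure imposed by $G''$ being a rank $3$ normal subgroup when $\rk F = 1$. Since $G$ is nonsoluble with $\rk F = 1$, we have $\rk G' \ge 3$. I would first analyse $G'$. If $G'$ has rank $3$, then by Lemma~\ref{l:classifyQ3} (applied to $G'$, noting $G'\trianglelefteq G$ forces $G''=G'$ and $G'\cong\pssl_2(K)$), $G'$ is already quasisimple; but then, since $G$ is not quasisimple, $G$ properly contains $G'$, and I would locate a soluble complement via centralisers to write $G = H * G''$. If instead $\rk G' \in \{4,5\}$, I would first show that a rank $3$ normal subgroup $Q = G''$ must appear: indeed $F^\circ(G')$ has rank at most $1$, and Fact~\ref{f.rankFourClassification} (if $\rk G' = 4$) or the main theorem combined with the classification lemmas (if $\rk G' = 5$) controls the structure.

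Concretely, the two target cases correspond to whether $G'' \cong \pssl_2(K)$ sits inside $G$ or whether $G'$ itself is a rank $4$ quasisimple bad group. First I would treat the presence of $G'' \cong \pssl_2(K)$: if $G$ admits a definable connected normal rank $3$ subgroup, Lemma~\ref{l:classifyQ3} gives $G = H * G''$ with $H = C^\circ(G'')$ soluble and $G'' = G''$ quasisimple. Here $\rk H = 5 - 3 = 2$, and since $\rk F = 1$ with $F \le H$, the quotient $H/Z(H)$ is a nonnilpotent rank $2$ group, hence isomorphic to $\agl_1(L) = L_+ \rtimes L^\times$ by Fact~\ref{f.rankTwoGroups}. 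This yields the first alternative. For the second, I would argue that if no rank $3$ normal subgroup exists, then $\rk G' = 4$, so $G'$ is a nonsoluble rank $4$ group with $F^\circ(G') \le F$ of rank at most $1$; invoking Fact~\ref{f.rankFourClassification}, since $G'' \ne \pssl_2(K)$ would be forced by the absence of a rank $3$ normal subgroup, the only surviving option is that $G'$ is a quasisimple bad group of rank $4$, and then $G = F * G'$ by a commutator/centraliser computation (using $\rk F = 1$, $F$ central, and $\rk FG' = \rk F + \rk G' = 5$).

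The main obstacle I expect is the bookkeeping needed to rule out the intermediate possibility cleanly and to verify that the two stated alternatives are exhaustive and mutually exclusive under the hypothesis that $G$ is \emph{not} quasisimple. In particular, I must carefully show that when $G'' \cong \pssl_2(K)$ appears, the complement $H$ is genuinely nonnilpotent (so that $H/Z(H) \cong \agl_1(L)$ rather than something degenerate) — this is where the hypothesis $\rk F = 1$ rather than $\rk F = 2$ does the work, since a rank $2$ Fitting subgroup would push us into the earlier $\rk F = 2$ case. I would also need to confirm that in the bad rank $4$ alternative the product $F * G'$ is central (so that $F \le Z(G)$ relative to $G'$), which follows because $F$ is connected nilpotent normal and $G'$ is quasisimple, forcing $[F, G']$ to be trivial or all of $G'$, and rank forbids the latter. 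The remaining verifications — definability, connectedness, and the rank additivity in each central product — are routine given the structural facts already assembled, and I would not grind through them.
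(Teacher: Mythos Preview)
Your outline works for $\rk G' \in \{3,4\}$ but has a genuine gap at $\rk G' = 5$, and this is exactly where your approach diverges from the paper's. The paper does not case on $\rk G'$ at all: it invokes the generalised Fitting subgroup to produce a \emph{proper} quasisimple normal subgroup $Q$ (proper precisely because $G$ is assumed not quasisimple), and then $\rk Q \in \{3,4\}$ is automatic, with Lemma~\ref{l:classifyQ3} and Fact~\ref{f.rankFourClassification} dispatching the two cases. That single appeal to components is the missing idea in your plan.

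Concretely, your first paragraph says that for $\rk G' = 5$ ``the main theorem combined with the classification lemmas controls the structure'', and your second paragraph simply asserts ``if no rank $3$ normal subgroup exists, then $\rk G' = 4$''. Neither is justified. The main theorem is about \emph{simple} groups of rank $5$, and here $F \neq 1$ is normal; none of the preceding lemmas in this section treats a perfect, non-quasisimple $G$ with $\rk F = 1$. This case \emph{can} be eliminated directly --- one shows $F \le Z(G)$, then $F^\circ(G/F)=1$ so $G/F$ is quasisimple bad of rank $4$ by Fact~\ref{f.rankFourClassification}, and finally one checks $Z(G)/F = Z(G/F)$ (else the commutator map $g\mapsto [x,g]$ for $xF \in Z(G/F)\setminus Z(G)/F$ produces a rank $3$ normal subgroup of the quasisimple group $G/F$), forcing $G$ quasisimple after all --- but this is real work you have not indicated. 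The generalised Fitting subgroup bypasses it in one line.

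A minor point: your argument that $[F,G']$ is ``trivial or all of $G'$'' is misstated. What you want is that $[F,G']$ is connected and contained in $F\cap G'$, while $(F\cap G')^\circ \le F^\circ(G')$ is finite since $G'$ is quasisimple; hence $[F,G']=1$.
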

\begin{proof}
Assume $\rk F = 1$. By considering the generalized Fitting subgroup of $G$, we see that $G$ contains some proper definable normal quasisimple subgroup $Q$ (see \cite[I,~Section~7]{ABC08}). 
If $Q$ has rank $3$, then, by Lemma~\ref{l:classifyQ3}, $G = H * Q$ with  $H$ connected and soluble and $Q\cong\pssl_2(K)$ for some algebraically closed field $K$. Since $\rk F = 1$, $H$ is nonilpotent, so $H/Z(H)\cong \agl_1(L)$ for some algebraically closed field $L$. Clearly, in this case, $Q = G''$. It remains to consider the case when $Q$ has rank $4$. By Fact~\ref{f.rankFourClassification}, we find that $Q$ is bad and that $F$ intersects $Q$ in a finite set. Thus, $G= F * Q$, and it must be that $Q = G'$.
\end{proof}

\begin{lemma}
If $\rk F = 2$ and $F\le Z(G)$, then $G = F * Q$ with $Q\cong\pssl_2(K)$.
\end{lemma}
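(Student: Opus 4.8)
The plan is to locate a rank-$3$ copy of $\pssl_2(K)$ inside $G$ by studying the derived subgroup and then splitting off the central part $F$. First I would pass to $\bar G := G/F$, which is connected, nonsoluble, and of rank $3$. Its derived subgroup $\bar G'$ is connected, definable, and normal; were it proper it would have rank at most $2$, so both $\bar G'$ and $\bar G/\bar G'$ would be soluble by Fact~\ref{f.rankTwoGroups}, forcing $\bar G$ soluble---a contradiction. Hence $\bar G$ is perfect, so $G = FG'$, and since $F$ is central, $G' = [FG',FG'] = G''$; that is, $G'$ is a perfect (hence quasisimple) connected normal subgroup with $(G'\cap F)^\circ \le Z^\circ(G') \le F^\circ(G')$. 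Writing $z := \rk(G'\cap F)$, rank additivity in $G = FG'$ gives $5 = 2 + \rk G' - z$, so $\rk G' = 3 + z$ with $z \in \{0,1,2\}$.

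Next I would run through the three values of $z$. If $z = 0$, then $G'$ is a connected normal subgroup of rank $3$, so Lemma~\ref{l:classifyQ3} applies and yields $G = H * Q$ with $Q := G' \cong \pssl_2(K)$ and $H := C^\circ(Q)$ connected and soluble; since $H\cap Q \le Z(Q)$ is finite, comparing ranks gives $\rk H = 2$. As $F \le Z(G) \le C(Q)$ and $F$ is connected, we have $F \le C^\circ(Q) = H$, and since $\rk F = 2 = \rk H$ with $H$ connected, $F = H$; thus $G = F * Q$, exactly as desired. If $z = 1$, then $G'$ is a nonsoluble connected group of rank $4$ with $\rk F^\circ(G') \ge \rk(G'\cap F)^\circ = 1$, so Fact~\ref{f.rankFourClassification} forces $G' = F^\circ(G') * Q'$ with $Q' \cong \pssl_2(K)$. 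But then $(G')' = Q'$ has rank $3$ and is proper in $G'$, contradicting the perfectness of $G'$. Hence $z \neq 1$.

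The remaining case $z = 2$ is the genuinely delicate one. Here $(G'\cap F)^\circ = F$, so $F \le G'$ and $\rk G' = 5 = \rk G$; connectedness then gives $G' = G$, i.e.\ $G$ is quasisimple of rank $5$ with $Z^\circ(G) = F$ of rank $2$ and $G/Z(G) \cong \psl_2(K)$ by Fact~\ref{f.rankThreeGroups}. The hard part will be to exclude this configuration, that is, to rule out a perfect central extension of $\psl_2(K)$ whose kernel has positive rank. This is precisely the assertion that the definable Schur multiplier of $\psl_2(K)$ over an algebraically closed field is finite: any perfect group of finite Morley rank that is a central extension of $\psl_2(K)$ has finite kernel. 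Granting this input (which concerns central extensions of Chevalley groups and lies outside the facts recalled in the excerpt), $Z^\circ(G)$ would be trivial, contradicting $\rk F = 2$; thus $z = 2$ cannot occur. With only $z = 0$ surviving, the proof is complete.

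I expect the central-extension step to be the main obstacle: the reductions via Lemma~\ref{l:classifyQ3} and Fact~\ref{f.rankFourClassification} are purely rank-theoretic and self-contained, whereas excluding a quasisimple $G$ of rank $5$ with rank-$2$ centre genuinely requires knowing that $\psl_2(K)$ admits no definable perfect central extension with infinite kernel---the one piece of information not packaged among the background facts of the excerpt.
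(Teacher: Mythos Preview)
Your argument is correct and runs parallel to the paper's, but via a different entry point. The paper does not work with $G'$; instead it invokes the generalised Fitting subgroup to produce a definable normal quasisimple component $Q$, then splits into the cases $\rk Q \in \{3,4,5\}$. The case $\rk Q = 5$ (your $z=2$) is eliminated by the theory of central extensions (the paper cites \cite[II,~Proposition~3.1]{ABC08}); the case $\rk Q = 4$ (your $z=1$) is killed exactly as you do, via Fact~\ref{f.rankFourClassification}; and $\rk Q = 3$ finishes via Lemma~\ref{l:classifyQ3}. So the case analysis and the two nontrivial ingredients are identical---only the source of the normal subgroup differs: you take $G'$ and first prove it is perfect, whereas the paper extracts a component. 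Your route is slightly more elementary in that it avoids the component/layer machinery; the paper's route is shorter because quasisimplicity of $Q$ is immediate.

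One remark on your closing worry: the central-extension input you flag as ``lying outside the facts recalled'' is in fact exactly what the paper uses (and cites) to dispose of the $G=Q$ possibility, so you are not importing anything the authors do not already rely on.
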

\begin{proof}
Again by considering the generalized Fitting subgroup of $G$, we see that $G$ contains some definable normal quasisimple subgroup $Q$. Note that the theory of central extension prohibits $G=Q$ (see \cite[II,~Proposition~3.1]{ABC08}. If $Q$ has rank $3$, we are done, so assume that $Q$ has rank $4$. Here, $Q\cap F$ has rank $1$, so by Fact~\ref{f.rankFourClassification}, we find that $Q = F(Q) * R$ with $R$ quasisimple. But this contradicts the fact that $Q$ is quasisimple.
\end{proof}

\begin{lemma}
If $\rk F = 2$, then either $G = F * G''$ with $G''\cong\pssl_2(K)$ or $F$ is $G$-minimal.
\end{lemma}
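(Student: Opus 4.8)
The plan is to prove the dichotomy by assuming $F$ is \emph{not} $G$-minimal and deducing $G = F * G''$ with $G'' \cong \pssl_2(K)$. Since $F$ is a definable connected normal subgroup of rank $2$, failure of $G$-minimality supplies a definable connected $G$-invariant subgroup $N \le F$ with $0 < \rk N < 2$; thus $\rk N = 1$ and $N \trianglelefteq G$, with $N$ abelian by Fact~\ref{f.rankOneGroups}. The idea is to collapse $N$, recognise $G/N$ via the rank $4$ classification, and then lift the information back to $G$ to produce a normal copy of $\pssl_2$ of rank $3$, at which point Lemma~\ref{l:classifyQ3} finishes the job.

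First I would pass to $\overline{G} := G/N$, which is connected of rank $4$ and nonsoluble (a soluble $\overline G$ would force $G$ soluble). As $F/N$ is a rank $1$ definable connected nilpotent normal subgroup of $\overline G$, it lies in $F^\circ(\overline G)$, and Fact~\ref{f.rankFourClassification} gives $\rk F^\circ(\overline G) \le 1$; hence $F^\circ(\overline G) = F/N$ and we are in the first case of that fact, so $\overline G = (F/N) * \overline Q$ with $\overline Q \cong \pssl_2(K)$. Here $\overline Q = \overline G'$ is characteristic in $\overline G$, so its preimage $Q_1 \le G$ is normal in $G$, connected, of rank $4$, nonsoluble, and contains $N \le F^\circ(Q_1)$. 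Applying Fact~\ref{f.rankFourClassification} a second time, now to $Q_1$, yields $F^\circ(Q_1) = N$ and $Q_1 = N * R$ with $R \cong \pssl_2(K)$. Since $N$ is a central rank $1$ subgroup of $Q_1$ and $R$ is perfect, $R = Q_1'$ is characteristic in $Q_1$, hence $R \trianglelefteq G$; it is a definable connected normal subgroup of rank $3$.

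Now Lemma~\ref{l:classifyQ3} applies to $R$ and gives $G = H * R$ with $H$ connected soluble and $R = G'' \cong \pssl_2(K)$. It remains to identify $F$ with $H$. Any normal subgroup of $R$ contained in the soluble group $H$ is soluble, hence central in the quasisimple $R$; thus $R \cap H$ is finite, whence $\rk H = 2$ and $G/H$ is isogenous to $\pssl_2(K)$, so $F^\circ(G/H) = 1$. But the image $FH/H$ is a definable connected nilpotent normal subgroup of $G/H$, forcing $FH/H = 1$ and $F \le H$. As $F \trianglelefteq H$ is definable of finite index in the connected group $H$, we conclude $F = H$, and therefore $G = F * G''$ with $G'' \cong \pssl_2(K)$, as desired.

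Most of the work is rank bookkeeping together with the verifications that $\overline Q$, $Q_1$, and $R$ are normal/characteristic through the quotient $G/N$ and the preimage $Q_1$. I expect the only genuinely delicate point to be the two-step descent to rank $4$: one must confirm that $F^\circ(\overline G)$ and then $F^\circ(Q_1)$ have rank exactly $1$, so that the first---rather than the ``quasisimple bad''---case of Fact~\ref{f.rankFourClassification} is forced each time, and that the $\pssl_2$ factor recovered inside $Q_1$ survives as a rank $3$ normal subgroup of $G$ to which Lemma~\ref{l:classifyQ3} can legitimately be applied.
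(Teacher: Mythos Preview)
Your proof is correct and follows essentially the same route as the paper: quotient by a rank~$1$ $G$-normal subgroup of $F$, apply Fact~\ref{f.rankFourClassification} to the rank~$4$ quotient, pull back the $\pssl_2$ factor to a rank~$4$ normal subgroup, and apply Fact~\ref{f.rankFourClassification} a second time to extract a rank~$3$ normal $\pssl_2$ inside $G$. The paper's argument is more compressed---it writes ``Certainly, $Q = G''$'' where you carefully invoke Lemma~\ref{l:classifyQ3} and verify $H = F$---but the structure is identical.
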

\begin{proof}
Assume that $F$ has a definable $G$-normal subgroup $A$ of rank $1$. Then, by Fact~\ref{f.rankFourClassification}, $G/A = F/A*R/A$ for some definable connected subgroup $R$ of $G$ containing $A$ with $R/A$ quasisimple. As $R$ has rank $4$, we find that $R = A*Q$ with $Q\cong\pssl_2(K)$. Certainly, $Q = G''$.
\end{proof}

So, it remains to treat the case where $F$ is $G$-minimal (hence abelian) and non-central. The key is, of course, \cite{DeA09}.

\begin{lemma}
If $F$ is an abelian non-central rank $2$ subgroup of $G$, then there is an algebraically closed field $K$ for which $F\cong K^2$ and $G/F \cong \ssl_2(K)$ acting naturally on $F$. Moreover, if $\charac(K) \neq 2$, then the extension splits as $G = F\rtimes C(i)$ for $i$ an involution of $G$.
\end{lemma}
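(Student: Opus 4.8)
The plan is to recognize that $F$ being $G$-minimal, abelian, noncentral of rank $2$ forces $G/F$ to act faithfully and irreducibly on $F$ as a definable group of rank $3$. First I would check faithfulness: the kernel of the action is $C_G(F)$, which is definable and contains $F$; since $F = F^\circ(G)$ and $C_G^\circ(F)$ is nilpotent-normal it lies in $F$, so $G/F$ embeds into the automorphisms of $F$ and has rank $\rk G - \rk F = 3$. Faithful irreducibility on a rank $2$ connected abelian group is precisely the hypothesis of the Deloro--Altseimer-type linearization result \cite{DeA09} (more precisely, the classification of faithful actions of connected groups of finite Morley rank on abelian groups of small rank): this identifies a definable algebraically closed field $K$ with $F \cong K^2$, and forces $G/F \cong \ssl_2(K)$ acting naturally, since $\ssl_2(K)$ is the unique rank $3$ connected group admitting such a $2$-dimensional irreducible representation. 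I would invoke \cite{DeA09} as a black box here; the bulk of the argument is arranging its hypotheses.

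Next I would address the splitting claim when $\charac K \neq 2$. Having identified $G/F \cong \ssl_2(K)$, the group $G$ contains a maximal $2$-torus (as $\ssl_2(K)$ in odd characteristic has Pr\"ufer $2$-rank $1$), and in particular $G$ has an involution $i$. The strategy is to produce a complement to $F$ as the centralizer $C(i)$ of a well-chosen involution. Since $\charac K \neq 2$, scalar multiplication by $-1$ on $F \cong K^2$ is a nontrivial involution of $G/F$; I would lift the corresponding $2$-element to an involution $i \in G$ (using torality and the structure of the Sylow $2$-subgroup, the involutions of $G$ project onto the involutions of $G/F$). The key computation is then that $C_F(i) = 1$: the image $\bar i$ acts on $F \cong K^2$ as $-\mathrm{id}$, which has no nonzero fixed points when $\charac K \neq 2$, and any fixed point of $i$ in $F$ would map to a fixed point of $\bar i$. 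Hence $i$ inverts $F$, so $F \cap C(i) = 1$, and a rank count gives $\rk C(i) = \rk G - \rk F = 3 = \rk(G/F)$, so $C(i)$ maps onto $G/F$ with finite kernel; connectedness and the rank equality then yield $G = F \rtimes C(i)$.

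The main obstacle I expect is the second half---controlling the involution and establishing $C_F(i) = 1$ cleanly. In characteristic $2$ the field has no element $-1 \neq 1$, so the same device is unavailable and one genuinely cannot split (the natural module for $\ssl_2$ in characteristic $2$ has a fixed space behaviour making the extension non-split), which is exactly why the statement restricts the splitting to $\charac K \neq 2$; I must therefore be careful that every step of the splitting argument visibly uses $\charac K \neq 2$. A subsidiary technical point is ensuring the lifted element $i$ is a genuine involution of $G$ and not merely a $2$-element of higher order: this is handled by torality \cite[Theorem 3]{BuCh08} together with the known structure of the Sylow $2$-subgroup of $\ssl_2(K)$, guaranteeing that the $2$-torsion of $G/F$ is covered by honest involutions of $G$. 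Once $C_F(i) = 1$ is in hand, the complement and the rank bookkeeping are routine.
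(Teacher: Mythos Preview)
Your approach is essentially the same as the paper's: invoke \cite{DeA09} to linearize the action and identify $\ssl_2(K)$, then split via the centralizer of an involution inverting $F$. One imprecision worth flagging: from $C_G^\circ(F)=F$ you only get that $C_G(F)/F$ is finite, so it is $G/C_G(F)$, not $G/F$, that embeds in the automorphism group of $F$ and is identified as $\ssl_2(K)$ by \cite{DeA09}; the paper closes this gap by applying the theory of central extensions to $G/F$ (a perfect central extension of $\ssl_2(K)$ with finite kernel) to conclude $C_G(F)=F$ exactly. For the splitting, the paper simply takes any involution $i\in G$ (its image in $G/F\cong\ssl_2(K)$ is automatically the unique, central involution, hence acts as $-1$ on $F$), notes $F\cong K^2$ has no involutions when $\charac K\neq 2$, and cites \cite[Lemma~9.3]{BBC07} for $G=F\rtimes C(i)$; your more hands-on version via $C_F(i)=1$ and a rank count is the same argument unpacked.
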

\begin{proof}
By  \cite[Theorem~A]{DeA09}, there is an algebraically closed field $K$ for which $F\cong K^2$ and $G/C(F)\cong \ssl_2(K)$ in its natural action. Since $C(F)$ is a finite extension of $F$, the theory of central extensions can be applied to $G/F$ to see that $C(F) = F$.  Now, assume $\charac(K) \neq 2$, and let $i$ be an involution of $G$. Set $H:=C(i)$. The image of $i$ in $G/F$ inverts $F$, so as $F$ has no involutions, we find that $G = F\rtimes C(i)$, see \cite[Lemma~9.3]{BBC07}.
\end{proof}

\section*{Acknowledgements}
This project was born in December of 2013 while the first author was a visiting professor at NYU Shanghai and the second author came for a visit---the authors are indebted to NYU Shanghai for its generous hospitality and financial support.

\bibliographystyle{alpha}
\bibliography{DWRank}
\end{document}